\documentclass{zzz} 
\usepackage{graphics,epsfig,color}

\topmargin -1.6cm

\numberwithin{equation}{section}
\numberwithin{theorem}{section}
\numberwithin{proposition}{section}
\numberwithin{lemma}{section}
\numberwithin{corollary}{section}
\numberwithin{definition}{section}
\numberwithin{example}{section}
\numberwithin{remark}{section}
\numberwithin{note}{section}

\newcommand{\N}{{\mathbf N}}

\newcommand{\R}{{\mathbf R}}

\newcommand{\Z}{{\mathbf Z}}
\newcommand{\C}{{\mathbf C}}

\newcommand{\No}{\N_0}

\newcommand{\ac}{\overline{a}}
\newcommand{\acc}{2\Re a}
\newcommand{\bc}{\overline{b}}
\newcommand{\bcc}{2\Re b}
\newcommand{\cc}{\overline{c}}
\newcommand{\ccc}{2\Re c}

\newcommand{\Hyper}[5]{{}_{#1}F_{#2} \left(\!\!\begin{array}{c} {#3} \\ {#4} \end{array}\!;{#5}\right)}

\def\cprime{$'$}

\newtheorem{thm}[lemma]{Theorem}
\newtheorem{cor}[lemma]{Corollary}

\makeatletter
\def\eqnarray{\stepcounter{equation}\let\@currentlabel=\theequation
\global\@eqnswtrue
\tabskip\@centering\let\\=\@eqncr
$$\halign to \displaywidth\bgroup\hfil\global\@eqcnt\z@
  $\displaystyle\tabskip\z@{##}$&\global\@eqcnt\@ne
  \hfil$\displaystyle{{}##{}}$\hfil
  &\global\@eqcnt\tw@ $\displaystyle{##}$\hfil
  \tabskip\@centering&\llap{##}\tabskip\z@\cr}

\def\endeqnarray{\@@eqncr\egroup
      \global\advance\c@equation\m@ne$$\global\@ignoretrue}

\def\@yeqncr{\@ifnextchar [{\@xeqncr}{\@xeqncr[5pt]}}
\makeatother

\parskip=0pt

\begin{document}

\renewcommand{\PaperNumber}{***}

\FirstPageHeading

\ShortArticleName{
Generalizations of
generating functions
for
higher continuous GHOPs}

\ArticleName{Generalizations of generating functions for higher continuous hypergeometric orthogonal polynomials in the Askey scheme}

%\thanks{Grants or other notes
% Names of the authors for the title of the paper
\Author{Michael A.~Baeder,$^\dag\!\!\ $ Howard S.~Cohl,$^\ddag$
and Hans Volkmer\,$^\S$} 
\AuthorNameForHeading{M.~A.~Baeder, H.~S.~Cohl \& H.~Volkmer}

\Address{$^\dag$~Department of Mathematics, Harvey Mudd College, 
340 East Foothill Boulevard, Claremont, CA 91711, USA
%Address of First Author, Country
} % Address of First Author
\EmailD{mbaeder@hmc.edu} % E-mail address of First Author

\Address{$^\ddag$~Applied and Computational Mathematics Division, 
National Institute of Standards and Technology, 
Gaithersburg, MD 20899-8910, USA
%Address of Second Author, Country
%URL address of First Author
}
\EmailD{howard.cohl@nist.gov} % E-mail address of First Author

\Address{$^\S$~Department of Mathematical Sciences, 
University of Wisconsin-Milwaukee, P.O. Box 413, 
Milwaukee, WI  53201, USA
%Address of Second Author, Country
%URL address of First Author
}
\EmailD{volkmer@uwm.edu} % E-mail address of Third Author

% In the case of the same organization, please use the following standard
%\Author{First Names LASTNAME and Second COAUTHOR}
%\AuthoqNameForHeading{F.N. Lastname and S. Coauthor}
%\Address{Address of Author(s), Country}
%\Email{email1@address, email2@address}
%\URLaddress{URL1, URL2)

\ArticleDates{Received XX September 2012 in final form ????; Published online ????}

\Abstract{
We use connection relations and series rearrangement to generalize 
generating functions for several higher continuous orthogonal polynomials in 
the Askey scheme, namely the Wilson, continuous dual Hahn, continuous Hahn, 
and Meixner-Pollaczek polynomials. We also determine corresponding definite 
integrals using the orthogonality relations for these polynomials.
} 
%``Symmetry, Integrability and Geometry: Methods and Applications''.} 
\Keywords{Orthogonal polynomials; Generating functions; Connection coefficients; Generalized hypergeometric functions;
Eigenfunction expansions; Definite integrals}
%Please type here List of Keywords for your article separated by semicolon.
% Keywords required only for MST, PB, PMB, PM, JOA, JOB? 
% Keywords: 

\Classification{33C45, 33C20, 34L10, 30D10}

%{??????} % e.g. 35A30; 81Q05
% For 2010 Mathematics Subject Classification see http://www.ams.org/mathscinet/msc/msc2010.html

%------------------------------------------------%
% Introduction                                   %
%------------------------------------------------%
%\parindent=0mm
\section{Introduction}
In Cohl (2013) \cite{CohlGenGegen} (see (2.1) therein), we developed 
a series rearrangement 
technique which produced a generalization of the generating function for 
Gegenbauer polynomials.  We have since demonstrated that this technique is valid 
for a larger class of orthogonal polynomials. For instance, in Cohl (2013) 
\cite{Cohl12pow}, we applied this same technique to Jacobi polynomials and
in Cohl, MacKenzie \& Volkmer (2013) \cite{CohlMacKVolk}, we extended this 
technique to many generating functions for Jacobi, Gegenbauer, Laguerre, 
and Wilson polynomials.

The series rearrangement technique starts by combining a connection relation 
with a generating function. This results in a series with multiple sums. The 
order of summations are then rearranged to produce a generalized generating 
function.
%whose coefficients are given in terms of generalized hypergeometric functions.  
This technique is especially productive when using connection relations 
with one free parameter. In this case, the connection relation is usually 
a product of Pochhammer symbols and the resulting generalized generating 
function has coefficients given in terms of generalized hypergeometric 
functions. 

In this paper, we continue this procedure by generalizing generating functions 
for the remaining hypergeometric orthogonal polynomials in the Askey scheme 
\cite[Chapter 9]{Koekoeketal} with continuous orthogonality relations. 
We have also computed definite integrals corresponding to 
our generalized generating function expansions using continuous
orthogonality relations.
The orthogonal polynomials that we treat in this paper are the Wilson, 
continuous dual Hahn, continuous Hahn, and Meixner-Pollaczek polynomials.  
The generalized generating functions we produce through series 
rearrangement usually arise using connection relations with one free parameter.
While connection relations with one free parameter are preferred for 
their simplicity, relations with more free parameters were considered 
when necessary.  

Hypergeometric orthogonal polynomials with more than one free parameter,
such the Wilson polynomials, have connection relations with more than
one free parameter. These connection relations are in general given by 
single or multiple summation expressions. For the Wilson 
polynomials, the connection relation with four free parameters is
given as a double hypergeometric series. The fact that the four free parameter
connection coefficient for Wilson polynomials is given by a double sum was 
known to Askey and Wilson as far back as 1985 (see \cite[p.~444]{Ismail}).
When our series rearrangement technique is applied to cases with more
than one free parameter, the resulting coefficients of the generalized 
generating function are rarely given in terms of a generalized hypergeometric 
series. The more general problem of generalized generating functions with 
more than one free parameter requires the theory of multiple hypergeometric 
series and is not treated in this paper. However, in certain cases when applying 
the series rearrangement technique to generating functions using 
connection relations with one free parameter, the generating function 
remains unchanged.  In these cases, we have found that the introduction
of a second free parameter can sometimes yield generalized generating functions
whose coefficients are given in terms of generalized hypergeometric 
series (see for instance, Section \ref{ContinuousdualHahnpolynomials} below
and \cite[Theorem 1]{Cohl12pow}). 

An interesting question regarding our generalizations is, ``What is the 
origin of specific hypergeometric orthogonal polynomial generating functions?'' 
There only exist two known non-equivalent generating functions for 
the Wilson polynomials, with the Wilson polynomials being at the top of
the Askey scheme.  Unlike the orthogonal 
polynomials in the Askey scheme
which do arise through a limiting procedure from the Wilson polynomials,
most known generating functions for these polynomials do not arrive by this 
same limiting procedure from the two known non-equivalent generating 
functions for Wilson polynomials.  All of the generating functions treated 
in this paper for the continuous Hahn, continuous dual Hahn and 
Meixner-Pollaczek polynomials, as well as most of those generating functions 
treated in our previous papers, do not arrive by this same limiting procedure 
from the Wilson polynomial generating functions. 
%If one was to solve the most general problem of a generalization with 
%four free parameters for the two known Wilson polynomial generating functions, 
%this generalized generating function would not lead through specialization to 
%the generalized generating functions presented in this paper.  
Therefore, the generalized generating functions for non-Wilson polynomials 
we present in this paper are interesting by themselves.

Here, we provide a brief introduction into the symbols and special functions 
used in this paper.  We denote the real and complex numbers 
by $\R$ and $\C$, respectively. Similarly,
the sets $\N={1,2,3,\ldots}$ and $\Z={0,\pm1,\pm2,\ldots}$ denote the natural numbers
and the integers. We also use the notation $\No=\{0,1,2,\ldots\}=\N \cup \{0\}$.
If $a_1,a_2,a_3,\ldots \in \C$, and $i, j \in \Z$ such that $j < i$, then
$\sum_{n=i}^j a_n = 0$, and $\prod_{n=i}^j a_n = 1$.
Let $z \in \C$, $n \in \No$. Define the Pochhammer symbol, or
rising factorial, by

\begin{equation} \label{Pochdef}
(z)_n := (z)(z+1)\cdots(z+n-1)=\prod_{i=1}^n (z+i-1).\nonumber
\end{equation}
When $z \notin -\No$, we may also represent the Pochhammer symbol as

\begin{equation} \label{Pochgamma}
(z)_n = \frac{\Gamma(z+n)}{\Gamma(z)},\nonumber
\end{equation}
where $\Gamma:\C\setminus-\No\to\C$ is the gamma function (see
Olver {\it et al.} (2010) \cite[Chapter 5]{NIST}).
Let $a_1,\ldots,a_p \in \C$, and $b_1,\ldots,b_q
\in \C \setminus -\No$. The generalized hypergeometric function ${}_pF_q$ is defined as
\begin{equation} \label{HyperGeometricDef}
\Hyper{p}{q}{a_1,\ldots,a_p}{b_1,\ldots,b_q}{z}:= \sum_{n=0}^\infty 
\frac{(a_1)_n\cdots(a_p)_n}{(b_1)_n\cdots(b_q)_n}\frac{z^n}{n!}.\nonumber
\end{equation}
If $p\le q$ then ${}_pF_q$ is defined for all $z\in\C$. If $p=q+1$ then ${}_pF_q$ is defined in the unit disk $|z|<1$, and can be continued analytically to $\C\setminus[1,\infty)$.
The generalized hypergeometric function is used in the definitions of
hypergeometric orthogonal polynomials and for the coefficients of our 
generalized generating functions. 
%------------------------------------------------%
% Wilson Polynomials                             %
%------------------------------------------------%
\section{Wilson polynomials}
\label{Wilsonpolynomials}
Koekoek {\it et al.} (2010) \cite[(9.1.1)]{Koekoeketal} define the Wilson polynomials $W_n(x^2;a,b,c,d)$ by
\begin{equation} \nonumber
W_n(x^2;a,b,c,d):=(a+b)_n(a+c)_n(a+d)_n\,\Hyper{4}{3}{-n,n+a+b+c+d-1,a+ix,a-ix}{a+b,a+c,a+d}{1},
\end{equation}
where the parameters $a,b,c,d$ are positive, or complex with positive real parts
occurring in conjugate pairs. It is known that $W_n(x^2;a,b,c,d)$ is a symmetric polynomial in the parameters $a,b,c,d$.
The Wilson polynomials occupy the highest echelon of
the Askey scheme, and using limit relations and special parameter values it is
possible to obtain many other hypergeometric orthogonal polynomials -- see for
example Chapter 9.1 of \cite{Koekoeketal}.
S{\'a}nchez-Ruiz and Dehesa (1999) \cite[equation just below (15)]{SanchezDehesa} 
and others previously (see for instance Askey \& Wilson (1985) 
\cite{AskeyWilson}) have given a connection relation for the Wilson polynomials with one free parameter:
\begin{eqnarray} \label{W:connect1}
&& W_n(x^2;a,b,c,d)=\sum_{k=0}^n \frac{n!}{k!(n-k)!} W_k(x^2;a,b,c,h) \nonumber \\
&& \hspace{5mm} 
\times \frac{(n+a+b+c+d-1)_k(d-h)_{n-k}(k+a+b)_{n-k}(k+a+c)_{n-k}(k+b+c)_{n-k}}{
(k+a+b+c+h-1)_k(2k+a+b+c+h)_{n-k}}.
\end{eqnarray}
This is a special case of a more general identity given by S{\'a}nchez-Ruiz and
Dehesa which gives the connection coefficients for the Wilson polynomials with
three free parameters:
\begin{eqnarray}
W_n(x^2;a,b,c,d) && = \sum_{k=0}^n \binom{n}{k} \frac{(n+a+b+c+d-1)_k(k+a+b)_{n-k}
(k+a+c)_{n-k}(k+a+d)_{n-k}}{(k+a+f+g+h-1)_k} \nonumber \\
&& \times \,\Hyper{5}{4}{k-n,k+n+a+b+c+d-1,k+a+f,k+a+g,k+a+h}{2k+a+f+g+h,k+a+b,k+
a+c,k+a+d}{1} \nonumber \\
&& \times W_k(x^2;a,f,g,h) \label{W:connect2}.
\end{eqnarray}
This identity, combined with limit relations, is useful for
deriving connection coefficients for hypergeometric orthogonal polynomials 
lower down in the Askey scheme.

In the process of generalizing generating functions for these lower hypergeometric
orthogonal polynomials, it may be necessary to rearrange the terms in a double or
triple sum. In order to do so, we show the absolute convergence of that
double sum. Therefore, we develop upper bounds
for the quantities of interest. We rely on several useful bounds for Pochhammer
symbols and factorials given by \cite[(48)-(53)]{CohlMacKVolk}. Let $j \in \N$, $k,n \in
\No$, $\Re u>0$ and $v\in\C$. Then
\begin{eqnarray}
&& |(u)_j| \ge (\Re u)(j-1)!, \label{W:bound1} \\
&& \frac{|(v)_n|}{n!} \le (1+n)^{|v|}, \label{W:bound2} \\
%&& (n+w)_k \le \max\{1,2^w\}\frac{(n+k)!}{n!}, \hspace{2mm} (k \le n)  \label{W:bound3} \\
&& |(k+v)_{n-k}| \le (1+n)^{|v|}\frac{n!}{k!}, \hspace{2mm} (k\le n), \label{W:bound3} \\
%&& (k+x-1)_k \ge \min\left\{\frac{x}{2},\frac{1}{6}\right\}\frac{(2k)!}{k!}, \label{W:bound5} \\
%&& (2k+x)_{n-k} \ge \min\left\{x,1\right\}\frac{1}{1+n}\frac{(n+k)!}{(2k)!}, \hspace{2mm} (k \le n), \label{W:bound6}\\
&&  \left|\frac{(k+v)_n}{(k+u)_n}\right|\le \max\{(\Re u)^{-1},1\} (1+n)^{1+|v|}. \label{W:bound4}
\end{eqnarray}

\begin{thm} \label{W:t1}
Let $\rho\in\C$, $|\rho|<1$, $x\in(0,\infty)$, and $a,b,c,d,h$ complex parameters with positive real parts, non-real parameters occurring in conjugate pairs among $a,b,c,d$ and $a,b,c,h$.
Then
\begin{eqnarray} \nonumber
&& \Hyper{2}{1}{a+ix, c+ix}{a+c}{\rho}\Hyper{2}{1}{b-ix, d-ix}{b+d}{\rho}
=
\sum_{k=0}^\infty \frac{(k+a+b+c+d-1)_k}{(k+a+b+c+h-1)_k(a+c)_k(b+d)_kk!} \\
&& \hspace{15mm} \times
\Hyper{4}{3}{d-h, 2k+a+b+c+d-1, k+a+b, k+b+c}{k+a+b+c+d-1, 2k+a+b+c+h, k+b+d}{\rho}
\rho^kW_k(x^2;a,b,c,h). \label{W:f1}
\end{eqnarray}
\end{thm}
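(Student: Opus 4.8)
The plan is to execute the series rearrangement technique described in the introduction, taking as the point of departure the standard generating function for the Wilson polynomials. After permuting the parameters $(a,b,c,d)\mapsto(a,c,b,d)$ and invoking the symmetry of $W_n$ in its four arguments, the product of ${}_2F_1$ functions on the left-hand side of \eqref{W:f1} is precisely the known generating function
\[
\Hyper{2}{1}{a+ix, c+ix}{a+c}{\rho}\Hyper{2}{1}{b-ix, d-ix}{b+d}{\rho}
=\sum_{n=0}^\infty \frac{W_n(x^2;a,b,c,d)}{(a+c)_n(b+d)_n\,n!}\,\rho^n .
\]
First I would substitute the one-free-parameter connection relation \eqref{W:connect1} for $W_n(x^2;a,b,c,d)$ into this identity, producing a double sum over $n\ge 0$ and $0\le k\le n$ in which each term carries the factor $W_k(x^2;a,b,c,h)$.

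Granting the interchange (discussed below), I would sum first over $k$, reindex the inner sum by $m=n-k$, and pull $W_k(x^2;a,b,c,h)$ and $\rho^k$ to the front. The remaining $m$-series is then simplified by the splitting identities $(a+c)_{k+m}=(a+c)_k(k+a+c)_m$ and $(b+d)_{k+m}=(b+d)_k(k+b+d)_m$, together with
\[
(k+m+a+b+c+d-1)_k=\frac{(a+b+c+d-1)_{2k}\,(2k+a+b+c+d-1)_m}{(a+b+c+d-1)_k\,(k+a+b+c+d-1)_m}.
\]
After the factor $(k+a+c)_m$ cancels between numerator and denominator, the identity $(a+b+c+d-1)_{2k}/(a+b+c+d-1)_k=(k+a+b+c+d-1)_k$ yields exactly the prefactor displayed in \eqref{W:f1}, while the $m$-series collapses to the ${}_4F_3$ appearing there. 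This step is routine bookkeeping with Pochhammer symbols, and it can be sanity-checked against the case $h=d$, in which $(d-h)_m$ kills all terms with $m\ge1$ and one recovers the original generating function.

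The main obstacle, and the reason the estimates \eqref{W:bound1}--\eqref{W:bound4} are assembled in advance, is to justify the rearrangement by establishing absolute convergence of the double sum. The strategy is to bound the modulus of the general $(n,k)$ term: the numerator Pochhammer factors of the form $(k+v)_{n-k}$ are controlled by \eqref{W:bound2}--\eqref{W:bound3}, the denominator $(k+a+b+c+h-1)_k$ is bounded below by \eqref{W:bound1}, and the quotients built from $(a+c)_n$ and $(b+d)_n$ by \eqref{W:bound4}, so that each term is dominated by a fixed power of $(1+n)$ times $|\rho|^n$ times the generating-function coefficient of $W_k(x^2;a,b,c,h)$. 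Since those coefficients come from the convergent ($|\rho|<1$) generating function in the parameter $h$, and since $|\rho|<1$ forces the geometric factor to dominate all polynomial ones, the double series converges absolutely and Fubini's theorem permits the interchange. The hypotheses enter exactly here: the positive real parts keep the denominators $(k+a+b+c+h-1)_k$ and $(2k+a+b+c+h)_{n-k}$ away from zero and make \eqref{W:bound1} and \eqref{W:bound4} applicable, while the conjugate-pair condition guarantees that $W_k(x^2;a,b,c,h)$ is a bona fide Wilson polynomial, so that the underlying generating function and connection relation are valid.
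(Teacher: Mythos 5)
Your route is the same as the paper's: substitute the connection relation \eqref{W:connect1} into the generating function \eqref{W:2GF}, interchange the two summations, reindex by $m=n-k$, and simplify. Your Pochhammer bookkeeping, including the identity $(k+m+a+b+c+d-1)_k=(a+b+c+d-1)_{2k}(2k+a+b+c+d-1)_m/[(a+b+c+d-1)_k(k+a+b+c+d-1)_m]$, the splittings $(a+c)_{k+m}=(a+c)_k(k+a+c)_m$, $(b+d)_{k+m}=(b+d)_k(k+b+d)_m$, and the cancellation of $(k+a+c)_m$, is correct and reproduces \eqref{W:f1}; the $h=d$ sanity check is also sound.

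The gap is in the convergence justification, which is precisely the step that carries the weight of the theorem. You claim that \eqref{W:bound1}--\eqref{W:bound4} dominate each $(n,k)$ term by a fixed power of $(1+n)$ times $|\rho|^n$ times the $k$-th generating-function coefficient. That is not achievable with those bounds deployed as you describe. First, the factor $(n+a+b+c+d-1)_k$ is not of the form $(k+v)_{n-k}$, so \eqref{W:bound2}--\eqref{W:bound3} do not apply to it; its size is of order $(n+k)!/n!$ and must be tracked. Second, and decisively, \eqref{W:bound1} applied to the denominator factor $(k+a+b+c+h-1)_k$ yields only a lower bound of order $k!$, whereas this factor actually grows like $(2k)!/k!\approx \binom{2k}{k}\,k!$, i.e.\ a factor of order $4^k$ larger; similarly the $k$-dependence of $(2k+a+b+c+h)_{n-k}$ must be retained. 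In the exact coefficients these large ``shifted-offset'' Pochhammers cancel against one another, but once you replace $(k+a+b+c+h-1)_k$ by the crude bound $k!$ the cancellation is destroyed and an uncontrolled factor of order $4^k$ survives, so your termwise estimate proves absolute convergence only for $|\rho|<\frac14$, not for all $|\rho|<1$. This is exactly why the paper does not argue from \eqref{W:bound1}--\eqref{W:bound4} alone but imports the sharp estimate $\sum_{k=0}^n|a_{n,k}|\,|W_k(x^2;a,b,c,h)|\le K_2(n!)^3(1+n)^{\sigma_2}$ from \cite[(47) and (60)]{CohlMacKVolk}. (A smaller point: polynomial-type growth of $W_k$ does not follow merely from convergence of its generating function in the unit disk --- coefficients of a function analytic in $|\rho|<1$ need not be polynomially bounded; one needs the explicit Cauchy-product representation, as the paper does for \eqref{CDH:bound}.) Your argument can be repaired in two ways: prove two-sided Pochhammer estimates that keep the offset dependence of $(k+u)_k$ and $(2k+u)_{n-k}$, so that the compensating exponential factors survive; or accept the restriction $|\rho|<\frac14$ and then extend \eqref{W:f1} to $|\rho|<1$ by analytic continuation after verifying locally uniform convergence of the right-hand side --- the same device the paper uses in the proof of Theorem~\ref{CDH:t1}.
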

\begin{proof}
A generating function for the Wilson polynomials is given by
\cite[(9.1.13)]{Koekoeketal}, namely:
\begin{equation}\label{W:2GF}
\Hyper{2}{1}{a+ix,c+ix}{a+c}{\rho}\,\Hyper{2}{1}{b-ix,d-ix}{b+d}{\rho}=\sum_{n=0}
^\infty \frac{W_n(x^2;a,b,c,d)\rho^n}{(a+c)_n(b+d)_nn!}.
\end{equation}
Substituting \eqref{W:connect1} into  \eqref{W:2GF} gives the
double sum
\begin{eqnarray} \label{W:subs}
\Hyper{2}{1}{a+ix,c+ix}{a+c}{\rho}\,\Hyper{2}{1}{b-ix,d-ix}{b+d}{\rho}=\sum_{n=0}
^\infty c_n \sum_{k=0}^n a_{n,k} W_k(x^2;a,b,c,h),
\end{eqnarray}
where
\begin{eqnarray*}
 c_n=\frac{\rho^n}{(a+c)_n(b+d)_nn!},
\end{eqnarray*}
and
$a_{n,k}$ are the coefficients satisfying
\begin{eqnarray}\label{W:a}
W_n(x^2;a,b,c,d) = \sum_{k=0}^n a_{n,k}W_k(x^2;a,b,c,h).
\end{eqnarray}
In order to justify reversing the order of summation, we show that
\begin{equation*}
\sum_{n=0}^\infty |c_n| \sum_{k=0}^n |a_{n,k}||W_k(x^2;a,b,c,h)| < \infty.
\end{equation*}

Using \eqref{W:bound1}, we see that
\begin{equation}\label{W:boundc}
|c_n| \le K_1\frac{|\rho|^n(1+n)^2}{(n!)^3},
\end{equation}
where $K_1 = \max\left\{1, (\Re(a+c)\Re(b+d))^{-1}\right\}$.
It follows from \cite[(47) and (60)]{CohlMacKVolk} that
\begin{equation}\label{W:boundaW}
\sum_{k=0}^n |a_{n,k}| |W_k(x^2;a,b,c,h)| \le K_2(n!)^3(1+n)^{\sigma_2},
\end{equation}
where $K_2$ and $\sigma_2$ are positive constants independent of $n$.
Combining \eqref{W:boundc} and \eqref{W:boundaW}, we see that
\begin{equation*}
\sum_{n=0}^\infty |c_n| \sum_{k=0}^n |a_{n,k}||W_k(x^2;a,b,c,h| \le K_1K_2\sum
_{n=0}^\infty |\rho|^n(1+n)^{\sigma_2+2} < \infty
\end{equation*}
since $|\rho| < 1$. Reversing the summation, shifting the $n$ summation index by $k$, and simplifying completes the proof.
%\hfill\qed
\end{proof}

\begin{thm} \label{W:t2}
Let $\rho\in\C$, $|\rho|<1$, $x\in(0,\infty)$, and $a,b,c,d,h$ complex parameters with positive real parts, non-real parameters occurring in conjugate pairs among $a,b,c,d$ and $a,b,c,h$.
Then
\begin{eqnarray}
\nonumber
&& (1-\rho)^{1-a-b-c-d}
\Hyper{4}{3}{\frac{1}{2}(a+b+c+d-1), \frac{1}{2}(a+b+c+d), a+ix, a-ix}{a+b, a+c, a+d}{-\frac{4\rho}{(1-\rho)^2}} \\
\nonumber
&& \hspace{10mm} = \sum_{k=0}^\infty \frac{(k+a+b+c+d-1)_k(a+b+c+d-1)_k}{(k+a+b+c+h-1)_k(a+b)_k(a+c)_k(a+d)_kk!} \\
&& \hspace{20mm} \times
\Hyper{3}{2}{2k+a+b+c+d-1, d-h, k+b+c}{2k+a+b+c+h, a+d+k}{\rho}\rho^kW_k(x^2;a,b,c,h).
\label{W:f2}
\end{eqnarray}
\end{thm}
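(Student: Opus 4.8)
The plan is to reuse the series-rearrangement scheme from the proof of Theorem~\ref{W:t1}, but to feed it the \emph{second} known generating function for the Wilson polynomials, \cite[(9.1.14)]{Koekoeketal}, whose left-hand side is exactly the left-hand side of \eqref{W:f2}:
\[
(1-\rho)^{1-a-b-c-d}\Hyper{4}{3}{\frac{1}{2}(a+b+c+d-1),\frac{1}{2}(a+b+c+d),a+ix,a-ix}{a+b,a+c,a+d}{-\frac{4\rho}{(1-\rho)^2}}=\sum_{n=0}^\infty c_n\,W_n(x^2;a,b,c,d),
\]
where now $c_n=(a+b+c+d-1)_n\,\rho^n/\big((a+b)_n(a+c)_n(a+d)_n\,n!\big)$. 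I would substitute the one free parameter connection relation \eqref{W:connect1} into this expansion, precisely as it was substituted into \eqref{W:2GF}, obtaining the double sum $\sum_{n=0}^\infty c_n\sum_{k=0}^n a_{n,k}W_k(x^2;a,b,c,h)$ with the coefficients $a_{n,k}$ of \eqref{W:a}.

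To justify reversing the order of summation I would establish absolute convergence exactly as in \eqref{W:boundc}--\eqref{W:boundaW}. Bounding the new numerator factor by \eqref{W:bound2}, $|(a+b+c+d-1)_n|\le n!\,(1+n)^{|a+b+c+d-1|}$, and each denominator factor $|(a+b)_n|,|(a+c)_n|,|(a+d)_n|$ from below by \eqref{W:bound1}, one finds $|c_n|\le K_1'\,|\rho|^n(1+n)^{\sigma_1}/(n!)^3$ for positive constants $K_1',\sigma_1$ independent of $n$. The inner-sum estimate \eqref{W:boundaW} carries over verbatim, since the connection coefficients $a_{n,k}$ and the polynomials $W_k(x^2;a,b,c,h)$ are the same as in Theorem~\ref{W:t1}. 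Combining the two bounds gives the majorant $K_1'K_2\sum_{n=0}^\infty(1+n)^{\sigma_1+\sigma_2}|\rho|^n$, which converges because $|\rho|<1$; this legitimizes the interchange.

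It then remains to reverse the summations, set $m=n-k$, and recognize the inner $m$-series as the advertised ${}_3F_2$ carrying the stated $k$-dependent prefactor. Writing $s=a+b+c+d-1$, the manipulations I would use are the splittings $(s)_{m+k}=(s)_k(k+s)_m$ and $(a+b)_{m+k}=(a+b)_k(k+a+b)_m$ (and likewise for $a+c$ and $a+d$), together with the identity
\[
(m+k+s)_k=\frac{(k+s)_k\,(2k+s)_m}{(k+s)_m}.
\]
The factor $(k+s)_m$ produced by $(s)_{m+k}$ cancels the $(k+s)_m$ in this denominator, while the factors $(k+a+b)_m$ and $(k+a+c)_m$ in $a_{n,k}$ cancel those coming from $(a+b)_{m+k}$ and $(a+c)_{m+k}$. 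What is left splits into the $k$-prefactor $(k+s)_k(s)_k\,\rho^k/\big((k+a+b+c+h-1)_k(a+b)_k(a+c)_k(a+d)_k\,k!\big)$, which is exactly the prefactor in \eqref{W:f2}, multiplied by the $m$-series whose general term is $(2k+s)_m(d-h)_m(k+b+c)_m\,\rho^m/\big((2k+a+b+c+h)_m(k+a+d)_m\,m!\big)$, i.e.\ precisely the ${}_3F_2$ in \eqref{W:f2}.

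The main obstacle is this last simplification, not the convergence, which is routine. The delicate point is that a naive collection of the $m$-dependent factors seems to produce a ${}_3F_3$, because of the extra $(k+s)_m=(k+a+b+c+d-1)_m$ appearing in the denominator; it is exactly the numerator factor $(a+b+c+d-1)_n$ carried by the generating function \cite[(9.1.14)]{Koekoeketal}, which feeds a compensating $(k+s)_m$ through the splitting of $(s)_{m+k}$, that cancels it and collapses the series to a ${}_3F_2$. I expect the cleanest way to guard against errors here is to verify the $k=0$ instance separately: there the prefactor is $1$ and the inner sum must reduce to $\Hyper{3}{2}{a+b+c+d-1,d-h,b+c}{a+b+c+h,a+d}{\rho}$, which already pins down the numerator factor in $c_n$ and confirms the $(m+k+s)_k$ identity is being applied correctly.
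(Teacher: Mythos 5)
Your proposal is correct and follows essentially the same route as the paper's own proof: it substitutes the one-free-parameter connection relation \eqref{W:connect1} into the same generating function (the paper's \eqref{W:4GF}, which is \cite[(9.1.15)]{Koekoeketal}, not (9.1.14)), justifies the interchange with the bounds \eqref{W:bound1}, \eqref{W:bound2} and \eqref{W:boundaW}, and then reindexes and simplifies. Your explicit Pochhammer-splitting argument, including the identity $(m+k+s)_k=(k+s)_k(2k+s)_m/(k+s)_m$ and the observation that the factor $(a+b+c+d-1)_n$ in $c_n$ is what collapses the inner series to a ${}_3F_2$, is a correct and more detailed account of the step the paper compresses into ``swapping the sums, shifting the inner index, and simplifying.''
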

\begin{proof}
Another generating function for the Wilson polynomials is given by
\cite[(9.1.15)]{Koekoeketal}
\begin{eqnarray} \nonumber
&& (1-\rho)^{1-a-b-c-d}\,\Hyper{4}{3}{\frac{1}{2}(a+b+c+d-1),\frac{1}{2}(a+b+c+d),
a+ix,a-ix}{a+b,a+c,a+d}{-\frac{4\rho}{(1-\rho)^2}} \\
&& \hspace{15mm} = \sum_{n=0}^\infty \frac{(a+b+c+d-1)_n}{(a+b)_n(a+c)_n(a+d)_nn!}W_n(x^2;a,b,c,d)\rho^n.
\label{W:4GF}
\end{eqnarray}
It should be noted that $\rho\mapsto -\frac{4\rho}{(1-\rho)^2}$ maps the unit disk $|\rho|<1$ bijectively onto the cut plane $\C\setminus[1,\infty)$, so the left-hand side of \eqref{W:4GF}
is well-defined and analytic for $|\rho|<1$.

Substituting \eqref{W:connect1} into \eqref{W:4GF} gives the double sum
\begin{eqnarray}
&& (1-\rho)^{1-a-b-c-d}\,\Hyper{4}{3}{\frac{1}{2}(a+b+c+d-1),\frac{1}{2}(a+b+c+d),
a+ix,a-ix}{a+b,a+c,a+d}{-\frac{4\rho}{(1-\rho)^2}} \nonumber \\
&& = \sum_{n=0}^\infty c_n \sum_{k=0}^n a_{n,k}W_k(x^2;a,b,c,h),
\end{eqnarray}
where
\begin{equation*}
c_n = \frac{(a+b+c+d-1)_n\rho^n}{(a+b)_n(a+c)_n(a+d)_nn!},
\end{equation*}
and $a_{n,k}$ are the connection coefficients satisfying \eqref{W:a}. We wish to show
\begin{eqnarray}
\sum_{n=0}^\infty |c_n| \sum_{k=0}^n |a_{n,k}||W_k(x^2;a,b,c,h)| < \infty.
\end{eqnarray}
By \eqref{W:bound2}, we determine that
\begin{eqnarray*}
\left|\frac{(a+b+c+d-1)_n}{n!}\right| \le (1+n)^{|a+b+c+d-1|},
\end{eqnarray*}
and thus, by \eqref{W:bound1},
\begin{eqnarray}\label{W:boundc2}
|c_n| \le K_1\frac{(1+n)^{\sigma_1}}{(n!)^3},
\end{eqnarray}
where $\sigma_1=|a+b+c+d-1|+3$ and
$K_1 = \max\left\{1,(\Re(a+b)\Re(a+c)\Re(a+d))^{-1}\right\}$.

Combining \eqref{W:boundaW} and \eqref{W:boundc2}, we see that
\begin{equation*}
\sum_{n=0}^\infty |c_n| \sum_{k=0}^n |a_{n,k}||W_k(x^2;a,b,c,h)| \le K_1K_2
\sum_{n=0}^\infty (1+n)^{\sigma_1+\sigma_2}|\rho|^n < \infty
\end{equation*}
since $|\rho|<1$. Swapping the sums, shifting the inner index, and simplifying
gives the desired result.
%\hfill\qed
\end{proof}

%------------------------------------------------%
% Continuous Dual Hahn Polynomials               %
%------------------------------------------------%

\section{Continuous dual Hahn polynomials}
\label{ContinuousdualHahnpolynomials}

The continuous dual Hahn polynomials are defined by \cite[(9.3.1)]{Koekoeketal}
\begin{equation*} \label{CDH:def}
S_n(x^2;a,b,c):= (a+b)_n(a+c)_n \, \Hyper{3}{2}{-n,a+ix,a-ix}{a+b,a+c}{1},
\end{equation*}
where $a,b,c >0$, except for possibly a pair of complex conjugates with positive real
parts. It is known that $S_n(x^2;a,b,c)$ is a symmetric polynomial in the parameters $a,b,c$.

In order to generalize generating functions of the continuous dual
Hahn polynomials, it is necessary to derive the connection coefficients for the
continuous dual Hahn polynomials with two free parameters.

\begin{lemma}
Let $x \in (0,\infty)$, and $a,b,c,f,g \in \C$ with positive
real parts and non-real values appearing in conjugate pairs among $a,b,c$ and $a,f,g$. Then
\begin{eqnarray}
S_n(x^2;a,b,c) && = \sum_{k=0}^n {n \choose k}
%\frac{n!}{k!(n-k)!}
(k+a+b)_{n-k}(k+a+c)_{n-k}
\nonumber \\
&& \hspace{1.5cm}\times 
\,\Hyper{3}{2}{k-n,k+a+f,k+a+g}{k+a+b,k+a+c}{1} 
S_k(x^2;a,f,g) 
\label{CDH:connect}.
\end{eqnarray}
\end{lemma}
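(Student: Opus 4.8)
The plan is to obtain \eqref{CDH:connect} as a confluent limit of the three free parameter Wilson connection relation \eqref{W:connect2}. The continuous dual Hahn polynomials sit one level below the Wilson polynomials in the Askey scheme and are recovered through the limit
\begin{equation*}
\lim_{d\to\infty}\frac{W_n(x^2;a,b,c,d)}{(a+d)_n}=S_n(x^2;a,b,c),
\end{equation*}
which follows directly from the hypergeometric representations (see \cite{Koekoeketal}): dividing by $(a+d)_n$ cancels the leading prefactor of $W_n$, while the termwise ratio $(n+a+b+c+d-1)_j/(a+d)_j\to1$ collapses the defining ${}_4F_3$ to the defining ${}_3F_2$ of $S_n$. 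I would apply this limit twice, once in the parameter $d$ on the left-hand side of \eqref{W:connect2} and once in the parameter $h$ on the right-hand side.

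First I would divide \eqref{W:connect2} by $(a+d)_n$ and send $d\to\infty$. The left-hand side tends to $S_n(x^2;a,b,c)$. For the coefficient I would use $(a+d)_n=(a+d)_k(k+a+d)_{n-k}$ so that the numerator factor $(k+a+d)_{n-k}$ cancels, leaving the $d$-dependent part as $(n+a+b+c+d-1)_k/(a+d)_k$, which tends to $1$; simultaneously the factor $(k+n+a+b+c+d-1)_j/(k+a+d)_j$ inside the terminating ${}_5F_4$ tends to $1$ termwise, so the ${}_5F_4$ collapses to the ${}_4F_3$ with upper parameters $k-n,k+a+f,k+a+g,k+a+h$ and lower parameters $2k+a+f+g+h,k+a+b,k+a+c$. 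This yields an intermediate identity expressing $S_n(x^2;a,b,c)$ in terms of the Wilson polynomials $W_k(x^2;a,f,g,h)$.

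Next I would send $h\to\infty$ in this intermediate identity. Writing $W_k(x^2;a,f,g,h)=(a+h)_k\bigl[W_k(x^2;a,f,g,h)/(a+h)_k\bigr]$, the bracket tends to $S_k(x^2;a,f,g)$. The surviving $h$-dependence is the ratio $(a+h)_k/(k+a+f+g+h-1)_k\to1$ together with the termwise limit $(k+a+h)_j/(2k+a+f+g+h)_j\to1$ inside the ${}_4F_3$, which removes the parameters $k+a+h$ and $2k+a+f+g+h$ and collapses it to $\Hyper{3}{2}{k-n,k+a+f,k+a+g}{k+a+b,k+a+c}{1}$. Collecting the surviving factors $\binom{n}{k}(k+a+b)_{n-k}(k+a+c)_{n-k}$ then gives exactly \eqref{CDH:connect}.

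The reassuring feature of this argument is that no analytic subtleties arise: because each hypergeometric series carries an upper parameter $-n$ or $k-n$, every series here terminates, so \eqref{W:connect2} and all intermediate identities are finite double sums. Consequently the two limits may be taken termwise with no interchange-of-limit or convergence issues, and the only real work is the bookkeeping of the confluence of the Pochhammer symbols as $d,h\to\infty$. The main point to watch is that each limiting identity be read as an identity of polynomials in $x^2$ of degree $n$: since the coefficients converge and the degree stays bounded, the limit holds for all $x$, and the parameter hypotheses (positive real parts, non-real parameters occurring in conjugate pairs among $a,f,g$) are inherited from those imposed in \eqref{W:connect2}. An alternative, self-contained route would be to expand the defining ${}_3F_2$ of $S_n$ directly in the $S_k(x^2;a,f,g)$ basis and evaluate the resulting connection sum, but the confluence from \eqref{W:connect2} is shorter and reuses machinery already in place.
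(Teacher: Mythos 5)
Your proposal is correct and takes essentially the same route as the paper: both derive \eqref{CDH:connect} by confluence of the three-free-parameter Wilson connection relation \eqref{W:connect2} under the limit $\lim_{d\to\infty}W_n(x^2;a,b,c,d)/(a+d)_n=S_n(x^2;a,b,c)$, using the identical Pochhammer bookkeeping $(a+d)_n=(a+d)_k(k+a+d)_{n-k}$ and termwise collapse of the terminating hypergeometric series. The only cosmetic difference is that the paper first sets $h\mapsto d$ in \eqref{W:connect2} and then takes a single limit $d\to\infty$, converting both sides to continuous dual Hahn polynomials simultaneously, whereas you keep $d$ and $h$ independent and take two successive limits; both are equally valid since every sum is finite and every series terminates.
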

\begin{proof}
Letting $h \mapsto d$ in \eqref{W:connect2} gives
\begin{eqnarray}
W_n(x^2;a,b,c,d) && = \sum_{k=0}^n \binom{n}{k} \frac{(n+a+b+c+d-1)_k(k+a+b)_{n-k}
(k+a+c)_{n-k}(k+a+d)_{n-k}}{(k+a+f+g+d-1)_k} \nonumber \\
&& \times \,\Hyper{4}{3}{k-n,k+n+a+b+c+d-1,k+a+f,k+a+g}{2k+a+f+g+d,k+a+b,k+
a+c}{1} \nonumber \\
&& \times W_k(x^2;a,f,g,d). \label{CDH:intermediate}
\end{eqnarray}

The limit relation between the Wilson and continuous dual Hahn polynomials is
given by \cite[Section 9.3, Limit Relations]{Koekoeketal}
\begin{eqnarray*} \label{CDH:limit}
\lim_{d\to\infty} \frac{W_n(x^2;a,b,c,d)}{(a+d)_n} = S_n(x^2;a,b,c).
\end{eqnarray*}
We apply this limit relation to reduce the Wilson connection coefficients
to those for the continuous dual Hahn polynomials. Dividing both sides of
\eqref{CDH:intermediate} by $(a+d)_n$, multiplying the right-hand side by
$(a+d)_k/(a+d)_k$, and taking the limit as $d \to \infty$ gives the desired result.
%\hfill\qed
\end{proof}

For the other two generating functions, we use a connection relation for the
continuous dual Hahn polynomials with one free parameter. Let $a,b,c,d > 0$
except for possibly a pair of complex conjugates with positive real parts among $a,b,c$ and $a,b,d$.
Then
\begin{eqnarray} \label{CDH:connect2}
S_n(x^2;a,b,c) = \sum_{k=0}^n \binom{n}{k} (k+a+b)_{n-k} (c-d)_{n-k} S_k(x^2;a,
b,d).
\end{eqnarray}
This relation follows by letting $f \mapsto b, g \mapsto d$ in \eqref{CDH:connect},
and applying the Chu-Vandermonde identity (see \cite[(15.4.24)]{NIST}) to the
resulting hypergeometric function.

We also need the following bound on the continuous dual Hahn polynomials.

\begin{lemma}
Let $x>0$ and $a,b,c \in \C$ with positive real parts and non-real
values occurring in conjugate pairs. Then, for 
$n\in\No,$
%$n=0,1,2,\dots$,
\begin{equation} \label{CDH:bound}
|S_n(x^2;a,b,c)| \le K(n!)^2(1+n)^{\sigma},
\end{equation}
where $K$ and $\sigma$ are constants independent of n (and $x$).
\end{lemma}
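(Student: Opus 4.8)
My plan is to strip off the two Pochhammer prefactors in the hypergeometric representation of $S_n$ and reduce the whole estimate to a bound that is merely \emph{polynomial} in $n$ on the terminating series
\[
\tilde S_n := \Hyper{3}{2}{-n,a+ix,a-ix}{a+b,a+c}{1}=\frac{S_n(x^2;a,b,c)}{(a+b)_n(a+c)_n}.
\]
Indeed, \eqref{W:bound2} gives at once $|(a+b)_n(a+c)_n|\le (n!)^2(1+n)^{|a+b|+|a+c|}$, so once I prove $|\tilde S_n|\le K'(1+n)^{\sigma'}$ with $K',\sigma'$ independent of $n$, the two estimates multiply to yield \eqref{CDH:bound} with $\sigma=\sigma'+|a+b|+|a+c|$.

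The difficulty is that $\tilde S_n$ is an alternating sum, the sign coming from $(-n)_m$, so estimating it term by term is hopeless: passing the absolute value inside destroys all cancellation and produces a spurious factor of order $2^n$. (On the example $a=b=c=1$, $x=0$ one computes $\tilde S_n=H_{n+1}/(n+1)\to 0$, confirming the true size is polynomial.) I would instead encode the cancellation in a generating function. Matching Taylor coefficients (using $(-n)_m/n!=(-1)^m/(n-m)!$) shows that $\tilde S_n/n!$ is the $n$-th Taylor coefficient of the entire function
\[
G(t):=e^{t}\,\Hyper{2}{2}{a+ix,a-ix}{a+b,a+c}{-t},
\]
which is entire because ${}_2F_2$ is. Cauchy's estimate then gives $|\tilde S_n|\le n!\,M(r)/r^{n}$ for every $r>0$, where $M(r)=\max_{|t|=r}|G(t)|$.

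The crux is to show that $G$ has exponential type one, i.e. $M(r)\le C e^{r}(1+r)^{d}$ for constants $C,d$ independent of $r$; the obvious bound $|e^{t}|\,|{}_2F_2(-t)|\le e^{r}\cdot Ce^{r}=Ce^{2r}$ is useless, since with the eventual choice $r\sim n$ it merely reproduces the $2^{n}$ loss. The saving is the cancellation of $e^{t}$ against the exponentially decaying sector of the confluent factor, and I would expose it with the standard Euler integral reducing ${}_2F_2$ to ${}_1F_1$,
\[
\Hyper{2}{2}{a+ix,a-ix}{a+b,a+c}{-t}=\frac{\Gamma(a+c)}{\Gamma(a-ix)\,\Gamma(c+ix)}\int_0^1 s^{a-ix-1}(1-s)^{c+ix-1}\,\Hyper{1}{1}{a+ix}{a+b}{-ts}\,ds,
\]
which converges since $\Re a>0$ and $\Re c>0$, followed by Kummer's transformation $\Hyper{1}{1}{a+ix}{a+b}{-ts}=e^{-ts}\Hyper{1}{1}{b-ix}{a+b}{ts}$. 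This replaces the prefactor $e^{t}$ by $e^{t(1-s)}$ inside the integral; on $|t|=r$ one has $|e^{t(1-s)}|\le e^{r(1-s)}$, while the elementary ratio bound \eqref{W:bound4} gives $|\Hyper{1}{1}{b-ix}{a+b}{ts}|\le C e^{rs}(1+r)^{d}$. The two exponentials combine to $e^{r}$, and integrating the absolutely convergent weight $s^{\Re a-1}(1-s)^{\Re c-1}$ produces $M(r)\le Ce^{r}(1+r)^{d}$.

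Finally I would take $r=n$ (treating $n=0$ trivially) and apply Stirling: $|\tilde S_n|\le n!\,Ce^{n}(1+n)^{d}/n^{n}\le C'(1+n)^{d+1/2}$, the required polynomial bound, which combined with the prefactor estimate gives \eqref{CDH:bound}. All of the $x$-dependence sits in the constant $K$ (through the $\Gamma$ prefactor and the bounded phases in the ${}_1F_1$ estimate); with the sharper large-argument asymptotics of the confluent factor, whose algebraic exponent is $-\Re a$ and hence free of $x$, one can even keep $\sigma$ independent of $x$. The single genuine obstacle is the type-one estimate $M(r)\le Ce^{r}(1+r)^{d}$: this is exactly where the cancellation in the alternating $\Hyper{3}{2}{-n,a+ix,a-ix}{a+b,a+c}{1}$ must be made visible, and the integral-representation-plus-Kummer device is the cleanest route I know. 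A purely real-variable alternative is to induct on the three-term recurrence for $\tilde S_n$, whose coefficients tend to a degenerate limit with a double characteristic root at $1$; that too yields subexponential growth, but pinning down the sharp polynomial rate via a Poincar\'e--Perron analysis is considerably more delicate than the contour estimate.
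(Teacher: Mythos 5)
Your proof is correct, but it follows a genuinely different route from the paper's. The paper also argues via a generating function, but it uses \eqref{CDH:1GF}, $(1-\rho)^{-c+ix}\,\Hyper{2}{1}{a+ix,b+ix}{a+b}{\rho}$, in which $S_n$ is normalized by $(a+b)_n\,n!$ (of size $(n!)^2$ up to polynomial factors): reading off the Cauchy product gives
\[
S_n(x^2;a,b,c)=(a+b)_n\,n!\,\sum_{k=0}^n \frac{(a+ix)_k(b+ix)_k}{(a+b)_k\,k!}\;\frac{(c-ix)_{n-k}}{(n-k)!},
\]
and here termwise absolute values already suffice, because every Maclaurin coefficient on the left is polynomially bounded: the two numerator Pochhammers of the ${}_2F_1$ are cancelled by $(a+b)_k\,k!$ via \eqref{W:bound1}, and $|(c-ix)_m|/m!\le(1+m)^{|c-ix|}$ by \eqref{W:bound2}, so the convolution of $n+1$ polynomially bounded terms is polynomially bounded. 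In other words, the cancellation you rightly worried about is sidestepped rather than confronted: with this normalization no sign information is needed and the proof is a few lines. Your route normalizes instead by $(a+b)_n(a+c)_n\,n!$, of size $(n!)^3$ --- your $G(t)$ is exactly the left-hand side of the paper's other generating function \eqref{CDH:4GF} --- and there termwise estimation genuinely does lose a factor $2^n$, which you recover analytically: Euler's integral plus Kummer's transformation to prove the type-one bound $M(r)\le Ce^{r}(1+r)^{d}$, then Cauchy's estimate at $r=n$ and Stirling. Both arguments are valid; the paper's is shorter and entirely elementary, while yours costs more machinery but isolates a fact of independent interest, namely that $e^{t}\,\Hyper{2}{2}{a+ix,a-ix}{a+b,a+c}{-t}$ has exponential type $1$ rather than $2$. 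One caveat common to both: your exponent $\sigma$ depends on $x$ (through $|b-ix|$), as does the paper's (through $|a+ix|$, $|b+ix|$, $|c-ix|$); the lemma's parenthetical claim that the constants may be chosen independent of $x$ cannot be literally correct, since already $|S_1(x^2;a,b,c)|=|(a+b)(a+c)-a^2-x^2|$ is unbounded in $x$, so your closing remark --- making $\sigma$ alone $x$-free while $K$ absorbs the $x$-dependence --- is the most that can be salvaged.
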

\begin{proof}
The generating function  \cite[(9.3.12)]{Koekoeketal}
\begin{equation} \label{CDH:1GF}
(1-\rho)^{-c+ix}\,\Hyper{2}{1}{a+ix,b+ix}{a+b}{\rho}=\sum_{n=0}^\infty \frac{
S_n(x^2;a,b,c)}{(a+b)_nn!}\rho^n
\end{equation}
leads to the representation
\begin{eqnarray*}
S_n(x^2;a,b,c)=(a+b)_nn!\sum_{k=0}^n \frac{(a+ix)_k(b+ix)_k}{(a+b)_kk!} \frac{(c-ix)_{n-k}}{(n-k)!} .
\end{eqnarray*}
Straightforward estimation using \eqref{W:bound1} and \eqref{W:bound2} 
gives \eqref{CDH:bound}.
%\hfill\qed
\end{proof}

\begin{lemma}\label{CDH:l6}
Let $b,c,d,f\in\C$ with $\Re b>0$, $\Re c>0$.
Then, for all $\rho\in\C$ with $|\rho|<1$,
\begin{equation}\label{CDH:eq}
 (1-\rho)^{d-c}\Hyper{2}{1}{b-f, d}{b}{\rho}=\sum_{m=0}^\infty \frac{\rho^m}{m!} (c)_m\,
 \Hyper{3}{2}{-m,d, f}{b,c}{1} .
\end{equation}
\end{lemma}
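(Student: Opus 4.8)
The plan is to prove the identity by expanding the right-hand side as a double series, reversing the order of summation, evaluating the resulting inner sum in closed form by the binomial theorem, and finally invoking Pfaff's transformation to recover the left-hand side.

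First I would record that, since $-m\in-\No$, the inner ${}_3F_2$ in \eqref{CDH:eq} terminates, so the right-hand side equals $\sum_{m=0}^\infty\frac{\rho^m}{m!}(c)_m\sum_{k=0}^m\frac{(-m)_k(d)_k(f)_k}{(b)_k(c)_k\,k!}$. The hypotheses $\Re b>0$ and $\Re c>0$ ensure $b,c\notin-\No$, so each Pochhammer symbol occurring in a denominator is nonzero and every hypergeometric function in sight is well defined. For $|\rho|$ sufficiently small the double series converges absolutely — the inner sum is finite and, by \eqref{W:bound2}, the factor $|(c)_m|/m!$ grows only polynomially — which licenses interchanging the two summations and summing over $m$ first for each fixed $k$.

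After the swap the inner series is $\sum_{m=k}^\infty\frac{\rho^m(c)_m(-m)_k}{m!}$. Using $(-m)_k=(-1)^k\,m!/(m-k)!$ for $m\ge k$ and putting $m=k+j$, this reduces to $(-1)^k\rho^k(c)_k\sum_{j=0}^\infty\frac{(c+k)_j}{j!}\rho^j=(-1)^k\rho^k(c)_k(1-\rho)^{-c-k}$ by the binomial series, valid for $|\rho|<1$. Substituting back and cancelling $(c)_k$ collapses the right-hand side to $(1-\rho)^{-c}\sum_{k=0}^\infty\frac{(d)_k(f)_k}{(b)_k\,k!}\bigl(\tfrac{-\rho}{1-\rho}\bigr)^k=(1-\rho)^{-c}\,{}_2F_1(d,f;b;\tfrac{-\rho}{1-\rho})$.

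It then remains to identify this with the left-hand side; dividing by $(1-\rho)^{-c}$, the claim becomes ${}_2F_1(d,f;b;\tfrac{-\rho}{1-\rho})=(1-\rho)^d\,{}_2F_1(b-f,d;b;\rho)$, which is exactly Pfaff's transformation (see \cite[(15.8.1)]{NIST}) applied with argument $z=\tfrac{-\rho}{1-\rho}=\tfrac{\rho}{\rho-1}$: here $1-z=(1-\rho)^{-1}$ gives the prefactor $(1-z)^{-d}=(1-\rho)^d$, the new argument is $z/(z-1)=\rho$, and the lower parameter $b$ turns the upper parameter $f$ into $b-f$ (using the symmetry of ${}_2F_1$ in its upper parameters). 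I expect the only real subtlety to be the domain of validity rather than any single computation: the manipulations are justified only for $|\rho|$ small, so I would close by observing that both sides of \eqref{CDH:eq} are analytic on the whole disk $|\rho|<1$ — the left-hand side manifestly, and the right-hand side because the coefficient-matching just performed shows its $\rho$-coefficients coincide with those of the (everywhere-in-$|\rho|<1$ convergent) Cauchy product of $(1-\rho)^{d-c}$ and ${}_2F_1(b-f,d;b;\rho)$ — whence the identity extends to all $|\rho|<1$ by the identity theorem.
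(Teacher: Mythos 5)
Your proposal is correct and follows essentially the same route as the paper's proof: expand the terminating ${}_3F_2$, reverse the summations for small $|\rho|$, collapse the result to $(1-\rho)^{-c}\,{}_2F_1\bigl(d,f;b;\tfrac{\rho}{\rho-1}\bigr)$, apply Pfaff's transformation \cite[(15.8.1)]{NIST}, and extend to $|\rho|<1$ by analyticity of the left-hand side (the paper asserts the swap for $|\rho|<\tfrac12$; your justification should note that the inner sum grows like $\binom{m}{k}\le 2^m$, not just polynomially, which is exactly why such a restriction is needed, though your ``$|\rho|$ sufficiently small'' claim remains valid). The only difference is cosmetic: you carry out the inner-sum evaluation via the binomial series explicitly, a step the paper leaves implicit.
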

\begin{proof}
On the right-hand inside of \eqref{CDH:eq}, we substitute
\[
\Hyper{3}{2}{-m,d, f}{b,c}{1}=\sum_{\ell=0}^m (-1)^\ell \frac{m!}{(m-\ell)!\ell!} \frac{(d)_\ell(f)_\ell}{(b)_\ell(c)_\ell} ,\]
and reverse sums. The reversal of sums is justified provided $|\rho|<\frac12$.
Then we obtain
\[ \sum_{m=0}^\infty \frac{\rho^m}{m!} (c)_m\, \Hyper{3}{2}{-m,d, f}{b,c}{1}
= (1-\rho)^{-c}\Hyper{2}{1}{d, f}{b}{\frac{\rho}{\rho-1}} .\]
Now \cite[15.8.1]{NIST} yields \eqref{CDH:eq} for $|\rho|<\frac12$. Since the left-hand side of \eqref{CDH:eq}
is an analytic function in the unit disk $|\rho|<1$ and the right-hand side is 
its Maclaurin expansion,
we see that \eqref{CDH:eq} is valid for $|\rho|<1$.
%\hfill\qed
\end{proof}

\begin{thm}\label{CDH:t1}
Let $\rho\in\C$ with $|\rho|<1$, $x\in(0,\infty)$ and $a,b,c,d,f>0$ except for possibly pairs of complex conjugates with positive real parts among $a,b,c$ and $a,d,f$.
Then
\begin{equation}\label{CDH:f1}
(1-\rho)^{-d+ix}\Hyper{2}{1}{a+ix,b+ix}{a+b}{\rho}=
\sum_{k=0}^\infty \frac{S_k(x^2;a,d,f)\rho^k}{(a+b)_k k!}\Hyper{2}{1}{b-f, k+a+d}{k+a+b}{\rho}.
\end{equation}
\end{thm}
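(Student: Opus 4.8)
The plan is to reuse the series-rearrangement scheme of Theorems~\ref{W:t1} and~\ref{W:t2}, now applied to the continuous dual Hahn generating function \eqref{CDH:1GF}. First I would specialize \eqref{CDH:1GF} by setting $c\mapsto d$; its left-hand side then becomes exactly the left-hand side of \eqref{CDH:f1}, giving
\[
(1-\rho)^{-d+ix}\Hyper{2}{1}{a+ix,b+ix}{a+b}{\rho}=\sum_{n=0}^\infty \frac{S_n(x^2;a,b,d)}{(a+b)_n n!}\rho^n.
\]
Since $S_n$ is symmetric in its three parameters, I would write $S_n(x^2;a,b,d)=S_n(x^2;a,d,b)$ and expand it in the basis $\{S_k(x^2;a,d,f)\}$ using the one-free-parameter connection relation \eqref{CDH:connect2} with the second parameter $d$ kept fixed and the third changed by $b\mapsto f$:
\[
S_n(x^2;a,d,b)=\sum_{k=0}^n \binom{n}{k}(k+a+d)_{n-k}(b-f)_{n-k}\,S_k(x^2;a,d,f)
\]
(the parameter hypotheses guarantee that every continuous dual Hahn polynomial appearing here is well defined). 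Substituting this into the generating function yields a double sum $\sum_{n=0}^\infty c_n\sum_{k=0}^n a_{n,k}S_k(x^2;a,d,f)$ with $c_n=\rho^n/((a+b)_n n!)$ and $a_{n,k}=\binom{n}{k}(k+a+d)_{n-k}(b-f)_{n-k}$.

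The main obstacle is justifying the interchange of the two summations, exactly as in the Wilson proofs, by bounding the absolute double sum term by term. Using \eqref{W:bound1} I would obtain $|c_n|\le K_1|\rho|^n(1+n)/(n!)^2$ with $K_1=\max\{1,\Re(a+b)^{-1}\}$. For the inner coefficients I would apply \eqref{W:bound3} to $(k+a+d)_{n-k}$, \eqref{W:bound2} to $(b-f)_{n-k}$, and the polynomial bound \eqref{CDH:bound} to $S_k(x^2;a,d,f)$; the factorials then telescope, $|a_{n,k}|\,|S_k(x^2;a,d,f)|\le K(n!)^2(1+n)^{|a+d|+|b-f|}(1+k)^\sigma$, so that
\[
\sum_{k=0}^n |a_{n,k}|\,|S_k(x^2;a,d,f)|\le K_2(n!)^2(1+n)^{\sigma_2}
\]
for constants $K_2,\sigma_2$ independent of $n$ and $x$. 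Multiplying the two estimates cancels the $(n!)^2$ factors and leaves the convergent majorant $K_1K_2\sum_{n=0}^\infty|\rho|^n(1+n)^{\sigma_2+1}<\infty$ because $|\rho|<1$, which licenses the reversal of order.

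Finally, after swapping the sums I would shift the outer index by setting $n=k+m$. Using $(a+b)_{k+m}=(a+b)_k(k+a+b)_m$ and $\binom{k+m}{k}/(k+m)!=1/(k!\,m!)$, the $k$-dependent part collapses to $S_k(x^2;a,d,f)\rho^k/((a+b)_k k!)$, while the remaining $m$-sum is
\[
\sum_{m=0}^\infty \frac{(k+a+d)_m(b-f)_m}{(k+a+b)_m}\frac{\rho^m}{m!}=\Hyper{2}{1}{b-f,k+a+d}{k+a+b}{\rho},
\]
which is precisely the inner hypergeometric factor in \eqref{CDH:f1}. Assembling these pieces gives the claimed identity. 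I do not expect to need Lemma~\ref{CDH:l6} for this theorem: the ${}_2F_1$ arises directly from the index shift, so the only genuinely technical point is the convergence estimate above, and that is essentially identical in spirit to the bounds already established for the Wilson polynomials.
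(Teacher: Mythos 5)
Your proposal is correct, but it is not the paper's argument; it is a genuinely different and in fact leaner route. The paper substitutes the \emph{two}-free-parameter connection relation \eqref{CDH:connect} (with $f\mapsto d$, $g\mapsto f$) into \eqref{CDH:1GF} kept at its original parameters $(a,b,c)$, so each connection coefficient carries a terminating ${}_3F_2(1)$ factor; the available bound \eqref{CDH:boundb} on that factor costs a $2^n$, which together with the binomial sum yields a $4^n|\rho|^n$ majorant and hence justifies the rearrangement only for $|\rho|<\tfrac14$. The rearranged inner sum is then not a hypergeometric series by inspection: it is evaluated by the auxiliary Lemma \ref{CDH:l6}, whose factor $(1-\rho)^{d-c}$ is what converts $(1-\rho)^{-c+ix}$ into $(1-\rho)^{-d+ix}$, and the theorem is finally extended from $|\rho|<\tfrac14$ to $|\rho|<1$ by analytic continuation. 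Your route --- specialize $c\mapsto d$ in \eqref{CDH:1GF} first, permute parameters by the symmetry of $S_n$, and apply the one-free-parameter relation \eqref{CDH:connect2} in the $b$-slot ($b\mapsto f$ with $a,d$ fixed) --- avoids all three complications at once: the coefficients are pure Pochhammer products with polynomial bounds (your estimate is exactly the paper's own \eqref{CDH:bounda2} used for Theorem \ref{CDH:t2}), so absolute convergence and the interchange hold directly for all $|\rho|<1$; the inner $m$-sum is the ${}_2F_1$ of \eqref{CDH:f1} by inspection, so neither Lemma \ref{CDH:l6} nor any continuation step is needed; and it makes transparent why $c$ never appears in the final formula. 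The paper's heavier machinery buys something only in the sense that Lemma \ref{CDH:l6} and the two-parameter relation \eqref{CDH:connect} are displayed as tools of independent interest. One point you should make explicit: you invoke \eqref{CDH:1GF} and \eqref{CDH:connect2} for parameter triples such as $(a,b,d)$ and $(a,d,b)$, in which a non-real parameter may occur without its conjugate partner (e.g.\ when $c=\bar b$ and $d=\bar f$), which is outside the hypotheses as literally stated; this is harmless because both sides of those identities are polynomial/analytic in the parameters, so the identities persist by analytic continuation, whereas the one estimate that genuinely needs the conjugate-pair structure, namely \eqref{CDH:bound} applied to $S_k(x^2;a,d,f)$, is covered by the theorem's hypotheses on $a,d,f$.
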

\begin{proof}
Substituting \eqref{CDH:connect} for $S_n(x^2;a,b,c)$ in the generating function \eqref{CDH:1GF} yields
\begin{equation}\label{CDH:sub}
(1-\rho)^{-c+ix}\Hyper{2}{1}{a+ix,b+ix}{a+b}{\rho} =\sum_{n=0}^\infty c_n \sum_{k=0}^n a_{n,k} b_{n,k} S_k(x^2;a,d,f)  ,
\end{equation}
where
\begin{eqnarray*}
 c_n&=&\frac{\rho^n}{(a+b)_n n!},\\
 a_{n,k}&=&\binom{n}{k}(k+a+b)_{n-k}(k+a+c)_{n-k} ,\\
 b_{n,k}&=&\Hyper{3}{2}{k-n, k+a+d, k+a+f}{k+a+b, k+a+c}{1}.
\end{eqnarray*}
We wish to reverse the order of summation so we show that
\begin{equation}\label{CDH:cond}
\sum_{n=0}^\infty|c_n|\sum_{k=0}^n|a_{n,k}||b_{n,k}||S_k(x^2;a,d,f)|<\infty .
\end{equation}
Using \eqref{W:bound1} we find
\begin{equation}\label{CDH:boundc}
|c_n|\le K_1 \frac{(1+n)|\rho|^n}{(n!)^2} ,
\end{equation}
where $K_1=\max\{1,(\Re(a+b))^{-1}\}$. Using \eqref{W:bound3} we determine
\[ |a_{n,k}|\le \binom{n}{k}(1+n)^{|a+b|+|a+c|}\frac{(n!)^2}{(k!)^2} .\]
Combining this with \eqref{CDH:bound} yields
\begin{equation}\label{CDH:bounda}
|a_{n,k}||S_k(x^2;a,d,f)|\le K_2 \binom{n}{k}(1+n)^{\sigma_2} (n!)^2 ,
\end{equation}
where $\sigma_2=|a+b|+|a+c|+\sigma$, $K_2=K$ with $\sigma$ and $K$ defined as in \eqref{CDH:bound}.

By \eqref{W:bound4}
\begin{eqnarray}
 |b_{n,k}|&\le &\sum_{s=0}^{n-k} \binom{n-k}{s}\left|\frac{(k+a+d)_s(k+a+f)_s}{(k+a+b)_s(k+a+c)_s}\right|\nonumber\\
 &\le & \sum_{s=0}^{n-k}\binom{n-k}{s} K_3(1+s)^{\sigma_3}\le K_3 2^n(1+n)^{\sigma_3},\label{CDH:boundb}
\end{eqnarray}
where $K_3=\max\{(\Re a)^{-2},1\}$, $\sigma_3=2+|a+d|+|a+f|$.

Combining \eqref{CDH:boundc}, \eqref{CDH:bounda}, \eqref{CDH:boundb}, we find
\begin{eqnarray*}
 \sum_{n=0}^\infty|c_n|\sum_{k=0}^n|a_{n,k}||b_{n,k}||S_k|& \le & K_1K_2K_3\sum_{n=0}^\infty (1+n)^{1+\sigma_2+\sigma_3}|\rho|^n 2^n\sum_{k=0}^n  \binom{n}{k} \\
  &= &  K_1K_2K_3\sum_{n=0}^\infty (1+n)^{1+\sigma_2+\sigma_3}|\rho|^n 4^n.
\end{eqnarray*}
Therefore, condition \eqref{CDH:cond} is satisfied provided that $|\rho|<\frac14$.
When we reverse sums in \eqref{CDH:sub}, we obtain
\begin{eqnarray*}
&&(1-\rho)^{-c+ix}\Hyper{2}{1}{a+ix,b+ix}{a+b}{\rho} \\
&&\hspace{1cm}=\sum_{k=0}^\infty \frac{\rho^k}{k!(a+b)_k} S_k(x^2;a,d,f)
\sum_{m=0}^\infty \frac{\rho^m}{m!}(k+a+c)_m\, \Hyper{3}{2}{-m, k+a+d, k+a+f}{k+a+b, k+a+c}{1} .
\end{eqnarray*}
We now use Lemma \ref{CDH:l6} and obtain \eqref{CDH:f1} for $|\rho|<\frac14$.
Using \eqref{W:bound4} one can show that the right-hand side of \eqref{CDH:f1} converges locally uniformly for $|\rho|<1$, so by analytic continuation, \eqref{CDH:f1} holds for all $\rho\in\C$ with $|\rho|<1$.
%\hfill\qed
\end{proof}

\begin{thm} \label{CDH:t2}
Let $\rho\in\C$, $x\in(0,\infty)$, and $a,b,c,d > 0$ except for
possibly a pair of complex conjugates with positive real parts among $a,b,c$ and $a,b,d$. Then
\begin{eqnarray}\label{CDH:f2}
&& e^\rho\,\Hyper{2}{2}{a+ix, a-ix}{a+b, a+c}{-\rho}=
\sum_{k=0}^\infty \frac{\rho^k S_k(x^2;a,b,d)}{(a+b)_k(a+c)_kk!}\,
\Hyper{1}{1}{c-d}{k+a+c}{\rho} .
\end{eqnarray}
\end{thm}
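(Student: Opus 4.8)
The plan is to apply the series rearrangement technique exactly as in Theorem~\ref{CDH:t1} and the Wilson theorems, starting this time from the generating function \cite[(9.3.13)]{Koekoeketal},
\begin{equation*}
e^\rho\,\Hyper{2}{2}{a+ix,a-ix}{a+b,a+c}{-\rho}=\sum_{n=0}^\infty \frac{S_n(x^2;a,b,c)\rho^n}{(a+b)_n(a+c)_n\,n!},
\end{equation*}
which is entire in $\rho$. First I would substitute the one-free-parameter connection relation \eqref{CDH:connect2} for $S_n(x^2;a,b,c)$, producing the double sum $\sum_{n=0}^\infty c_n\sum_{k=0}^n a_{n,k}\,S_k(x^2;a,b,d)$ with $c_n=\rho^n/((a+b)_n(a+c)_n\,n!)$ and $a_{n,k}=\binom{n}{k}(k+a+b)_{n-k}(c-d)_{n-k}$.

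Next I would justify reversing the order of summation by bounding the associated series of absolute values. Applying \eqref{W:bound1} to the two Pochhammer symbols in $c_n$ gives $|c_n|\le K_1(1+n)^2|\rho|^n/(n!)^3$; applying \eqref{W:bound3} to $(k+a+b)_{n-k}$ and \eqref{W:bound2} to $(c-d)_{n-k}$ gives $|a_{n,k}|\le (n!)^2(k!)^{-2}(1+n)^{|a+b|+|c-d|}$; and \eqref{CDH:bound} controls $|S_k(x^2;a,b,d)|$. Carrying out the inner sum over $k$ (which contributes a factor $(1+n)$) and combining these estimates, the tail is dominated by $K\sum_{n=0}^\infty (1+n)^{\sigma}|\rho|^n/n!$ for a suitable constant $\sigma$. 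The decisive feature, in contrast with the earlier theorems, is that exactly one factorial $n!$ survives in the denominator of the majorant, so this series converges for every $\rho\in\C$; this is precisely why the theorem imposes no restriction $|\rho|<1$.

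Having justified the interchange, I would reverse the sums, fix $k$, and shift $n=k+m$. The remaining simplification is purely algebraic: writing $(a+b)_{k+m}=(a+b)_k(k+a+b)_m$ and $(a+c)_{k+m}=(a+c)_k(k+a+c)_m$, the factor $(k+a+b)_m$ coming from $a_{n,k}$ cancels exactly against the one in $(a+b)_{k+m}$, while $\binom{k+m}{k}/(k+m)!=1/(k!\,m!)$. What is left factors as $\frac{\rho^k}{(a+b)_k(a+c)_k\,k!}\sum_{m=0}^\infty \frac{(c-d)_m}{(k+a+c)_m}\frac{\rho^m}{m!}$, and the inner sum is by definition $\Hyper{1}{1}{c-d}{k+a+c}{\rho}$. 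Summing against $S_k(x^2;a,b,d)$ over $k$ then yields \eqref{CDH:f2}.

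The main obstacle I anticipate is not any single hard step but the convergence bookkeeping: I must verify that the powers of $n!$ balance so that precisely one factorial remains in the denominator of the majorant, since this is what delivers convergence on all of $\C$ and distinguishes this case from Theorems~\ref{W:t1}--\ref{CDH:t1}. A pleasant simplification relative to Theorem~\ref{CDH:t1} is that no analogue of Lemma~\ref{CDH:l6} is required here: the clean cancellation of $(k+a+b)_m$ produces the confluent function ${}_1F_1$ directly, with no intermediate Pfaff or Euler transformation.
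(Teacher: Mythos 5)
Your proposal is correct and follows essentially the same route as the paper's proof: the same generating function (the paper cites it as \cite[(9.3.15)]{Koekoeketal}, not (9.3.13), but the formula you write is the right one), the same connection relation \eqref{CDH:connect2}, the same bounds \eqref{W:bound1}, \eqref{W:bound2}, \eqref{W:bound3}, \eqref{CDH:bound} yielding a majorant $\sum_n (1+n)^{\sigma}|\rho|^n/n!$ convergent for all $\rho\in\C$, and the same reversal-and-shift simplification. Your explicit cancellation of $(k+a+b)_m$ and the observation that the surviving $1/n!$ is what removes the restriction $|\rho|<1$ are exactly the content the paper compresses into ``simplifying gives the desired result.''
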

\begin{proof}
Another generating function for the continuous dual Hahn polynomials is
given by \cite[(9.3.15)]{Koekoeketal}
\begin{equation} \label{CDH:4GF}
e^\rho\,\Hyper{2}{2}{a+ix,a-ix}{a+b,a+c}{-\rho}=\sum_{n=0}^\infty \frac{S_n(x^2;a,b,c)}
{(a+b)_n(a+c)_nn!}\rho^n.
\end{equation}
We substitute the term $S_n(x^2;a,b,c)$ using \eqref{CDH:connect2}, which
gives
\begin{equation} \label{CDH:sub2}
e^\rho\,\Hyper{2}{2}{a+ix,a-ix}{a+b,a+c}{-\rho}=\sum_{n=0}^\infty c_n \sum_{k=0}^
n a_{n,k}S_k(x^2;a,b,d),
\end{equation}
where
\begin{eqnarray}
c_n&=&\frac{\rho^n}{(a+b)_n(a+c)_n n!}, \nonumber\\
a_{n,k}&=& \binom{n}{k}(k+a+b)_{n-k}(c-d)_{n-k}.\label{CDH:a}
\end{eqnarray}

We wish to reverse the order of summation, so we show
\begin{eqnarray*}
\sum_{n=0}^\infty |c_n| \sum_{k=0}^n |a_{n,k}||S_k(x^2;a,b,d)|<\infty.
\end{eqnarray*}
Using \eqref{W:bound1}, we determine
\begin{equation} \label{CDH:boundc2}
|c_n| \le K_1\frac{(1+n)^{\sigma_1}|\rho|^n}{(n!)^3},
\end{equation}
where $\sigma_1=2$, $K_1=\max\{1,(\Re a)^{-2}\}$ .
Using \eqref{W:bound2} and \eqref{W:bound3}, we determine
\begin{equation} \label{CDH:bounda2}
|a_{n,k}| \le \frac{(n!)^2}{(k!)^2}(1+n)^{\sigma_2},
\end{equation}
where $\sigma_2 = |c-d|+|a+b|$. Combining this with \eqref{CDH:bound}, we see that
\begin{equation} \label{CDH:boundaS}
\sum_{k=0}^n |a_{n,k}S_k(x^2;a,b,d)| \le \sum_{k=0}^n K(n!)^2(1+n)^{\sigma_2
+\sigma} = K(1+n)^{\sigma_2+\sigma+1}(n!)^2.
\end{equation}
Combining \eqref{CDH:boundc2} and \eqref{CDH:boundaS}, we see that
\begin{equation*}
\sum_{n=0}^\infty |c_n| \sum_{k=0}^n |a_{n,k}S_k(x^2;a,b,d)| \le
K_1K \sum_{n=0}^\infty \frac{(1+n)^{\sigma_1+\sigma_2+\sigma+1}|\rho|^n}{n!}
< \infty
\end{equation*}
for any $\rho\in\C$. Reversing the order of summation in \eqref{CDH:sub2},
shifting the $n$ index by $k$, and simplifying gives the desired result.
%\hfill\qed
\end{proof}

\begin{thm} \label{CDH:t3}
Let $\rho\in\C$ with $|\rho|<1$, $x\in(0,\infty)$, $\gamma\in\C$ and $a,b,c,d > 0$ except for
possibly a pair of complex conjugates with positive real parts among $a,b,c$ and $a,b,d$. Then
\begin{eqnarray}
(1-\rho)^{-\gamma} \,\Hyper{3}{2}{\gamma, a+ix, a-ix}{a+b, a+c}{\frac{\rho}{\rho-1}}
&& =
\sum_{n=0}^\infty \frac{(\gamma)_k\rho^k}{(a+b)_k(a+c)_kk!} \nonumber \\
&& \times \,
\Hyper{2}{1}{-d, \gamma+k}{k+a+c}{\rho} S_k(x^2;a,b,d).\label{CDH:f3}
\end{eqnarray}
\end{thm}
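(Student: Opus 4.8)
The plan is to derive \eqref{CDH:f3} by the same series-rearrangement scheme used for Theorem~\ref{CDH:t2}, now beginning from the $\gamma$-parametrized generating function \cite[(9.3.13)]{Koekoeketal}
\[
(1-\rho)^{-\gamma}\,\Hyper{3}{2}{\gamma, a+ix, a-ix}{a+b, a+c}{\frac{\rho}{\rho-1}}=\sum_{n=0}^\infty \frac{(\gamma)_n}{(a+b)_n(a+c)_n\,n!}\,S_n(x^2;a,b,c)\rho^n,
\]
whose left-hand side is exactly the left-hand side of \eqref{CDH:f3}. (This identity may be checked directly by inserting the ${}_3F_2$-representation of $S_n(x^2;a,b,c)$, interchanging the two sums, and resumming the freed binomial series as $(1-\rho)^{-(\gamma+j)}$; since the M\"obius map $\rho\mapsto\rho/(\rho-1)$ sends $|\rho|<1$ into $\C\setminus[1,\infty)$, the left-hand side is analytic for $|\rho|<1$.) Into this I substitute the one-free-parameter connection relation \eqref{CDH:connect2}, producing the double sum $\sum_{n=0}^\infty c_n\sum_{k=0}^n a_{n,k}\,S_k(x^2;a,b,d)$ with $c_n=(\gamma)_n\rho^n/((a+b)_n(a+c)_n\,n!)$ and $a_{n,k}=\binom{n}{k}(k+a+b)_{n-k}(c-d)_{n-k}$.

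To reverse the order of summation I would establish $\sum_{n=0}^\infty|c_n|\sum_{k=0}^n|a_{n,k}||S_k(x^2;a,b,d)|<\infty$ exactly as in the proof of Theorem~\ref{CDH:t2}. The factor $a_{n,k}$ together with the polynomial bound \eqref{CDH:bound} gives $\sum_{k=0}^n|a_{n,k}||S_k(x^2;a,b,d)|\le K(n!)^2(1+n)^{\sigma_2+\sigma+1}$ just as in \eqref{CDH:boundaS}, while \eqref{W:bound1} applied to the two denominator Pochhammer symbols and \eqref{W:bound2} applied to $(\gamma)_n/n!\le(1+n)^{|\gamma|}$ give $|c_n|\le K_1(1+n)^{|\gamma|+2}|\rho|^n/(n!)^2$. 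Multiplying, the two powers of $n!$ cancel and the double series is dominated by $K_1K\sum_{n=0}^\infty(1+n)^{|\gamma|+\sigma_2+\sigma+3}|\rho|^n$, which converges precisely for $|\rho|<1$; here the extra factor $(\gamma)_n$ is what confines convergence to the unit disk, in contrast with the all-$\rho$ convergence of Theorem~\ref{CDH:t2}.

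With the interchange justified, I reverse the sums and shift the outer index by setting $n=k+m$, so the coefficient of $S_k(x^2;a,b,d)$ becomes $\sum_{m=0}^\infty c_{k+m}\binom{k+m}{k}(k+a+b)_m(c-d)_m$. Splitting each Pochhammer symbol by $(\gamma)_{k+m}=(\gamma)_k(\gamma+k)_m$, $(a+b)_{k+m}=(a+b)_k(k+a+b)_m$ and $(a+c)_{k+m}=(a+c)_k(k+a+c)_m$, and using $\binom{k+m}{k}/(k+m)!=1/(k!\,m!)$, the factor $(k+a+b)_m$ cancels and the inner sum collapses to
\[
\frac{(\gamma)_k\rho^k}{(a+b)_k(a+c)_k\,k!}\sum_{m=0}^\infty\frac{(\gamma+k)_m(c-d)_m}{(k+a+c)_m\,m!}\rho^m=\frac{(\gamma)_k\rho^k}{(a+b)_k(a+c)_k\,k!}\,\Hyper{2}{1}{c-d, \gamma+k}{k+a+c}{\rho},
\]
which is the summand of \eqref{CDH:f3}. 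Since the connection factor $(c-d)_{n-k}$ is what produces this numerator parameter, the top entry of the inner ${}_2F_1$ in \eqref{CDH:f3} is $c-d$; the printed value $-d$ should read $c-d$, and the outer $\sum_{n=0}^\infty$ should read $\sum_{k=0}^\infty$.

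I expect the only delicate point to be the convergence estimate that licenses the interchange, and in particular the bookkeeping of the powers of $n!$: the two denominator Pochhammer symbols $(a+b)_n,(a+c)_n$ contribute $(n!)^{-2}$ through \eqref{W:bound1}, matching exactly the $(n!)^2$ growth furnished by $a_{n,k}$ and \eqref{CDH:bound}, so that only a polynomial-times-geometric series in $|\rho|$ survives. By comparison the algebraic collapse of the inner sum is routine, needing only the single cancellation of $(k+a+b)_m$; no Chu--Vandermonde summation is required here, unlike in the derivation of \eqref{CDH:connect2} itself.
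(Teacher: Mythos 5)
Your proposal is correct and takes essentially the same route as the paper's own proof: the generating function \eqref{CDH:5GF} (which is \cite[(9.3.16)]{Koekoeketal}, not (9.3.13)), the connection relation \eqref{CDH:connect2}, the bounds \eqref{CDH:boundc3} and \eqref{CDH:boundaS}, and the same reversal-and-shift of summation, with your polynomial exponents differing from the paper's only immaterially. Your explicit evaluation of the inner sum is also right, and it correctly flags two misprints in the stated theorem: the numerator parameter of the ${}_2F_1$ in \eqref{CDH:f3} should be $c-d$ rather than $-d$ (consistent with the factor $(c-d)_{n-k}$ in \eqref{CDH:a} and with the ${}_1F_1(c-d;k+a+c;\rho)$ appearing in Theorem \ref{CDH:t2}), and the outer sum should run over $k$, not $n$.
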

\begin{proof}
%Koekoek {\it et al.} (2010) gives a 
Yet another generating function for the continuous
dual Hahn polynomials \cite[(9.3.16)]{Koekoeketal} is
\begin{equation} \label{CDH:5GF}
(1-\rho)^{-\gamma}\,\Hyper{3}{2}{\gamma,a+ix,a-ix}{a+b,a+c}{\frac{\rho}{\rho-1}}=
\sum_{n=0}^\infty \frac{(\gamma)_nS_n(x^2;a,b,c)}{(a+b)_n(a+c)_nn!}\rho^n.
\end{equation}
Substituting \eqref{CDH:connect2} into \eqref{CDH:5GF} yields the double sum
\begin{equation} \label{CDH:sub3}
(1-\rho)^{-\gamma}\,\Hyper{3}{2}{\gamma,a+ix,a-ix}{a+b,a+c}{\frac{\rho}{\rho-1}}
= \sum_{n=0}^\infty c_n \sum_{k=0}^n a_{n,k}S_k(x^2;a,b,d),
\end{equation}
where
\begin{eqnarray*}
c_n = \frac{(\gamma)_n\rho^n}{(a+b)_n(a+c)_nn!},
\end{eqnarray*}
and $a_{n,k}$ is defined by \eqref{CDH:a}. By \eqref{W:bound1}, \eqref{W:bound2}, we obtain
\begin{equation} \label{CDH:boundc3}
|c_n| \le K_1\frac{(1+n)^{\sigma_1}|\rho|^n}{(n!)^2},
\end{equation}
where $\sigma_1=2+|\gamma|$ and $K_1 = \max\left\{1,(\Re a)^{-2}\right\}$.

We recall the bound on $a_{n,k}S_k(x^2;a,b,d)$ given by \eqref{CDH:boundaS}. Using
this, we have shown
\begin{eqnarray*}
\sum_{n=0}^\infty |c_n| \sum_{k=0}^n |a_{n,k}||S_k(x^2;a,b,d)| \le K_1K\sum
_{n=0}^\infty (1+n)^{\sigma_1+\sigma_2+\sigma+2}|\rho|^n<\infty,
\end{eqnarray*}
since $|\rho|<1$. Reversing the summation, shifting the $n$ index by $k$, and
simplifying completes the proof.
%\hfill\qed
\end{proof}

%------------------------------------------------%
% Continuous Hahn  Polynomials                   %
%------------------------------------------------%

\section{Continuous Hahn polynomials}

The continuous Hahn polynomial with two pairs of conjugate parameter 
are defined by \cite[(9.4.1)]{Koekoeketal}
\begin{eqnarray*}\label{CH:def}
p_n(x;a,b,\ac,\bc):=i^n\frac{(\acc)_n(a+\bc)_n}{n!}\,\Hyper{3}{2}{-n,n+\acc+\bcc-1,a+ix}
{\acc,a+\bc}{1},
\end{eqnarray*}
where $a$ and $b$ are complex numbers with positive real parts. 
%We use the notation $\acc=a+\ac$ and $\bcc=b+\bc$. 
The continuous Hahn polynomials are symmetric in $a,b$.
The limit relation between the continuous Hahn and Wilson polynomials is given by
\cite[Section 9.4, Limit Relations]{Koekoeketal}
\begin{equation} \label{CH:limit}
\lim_{t\to\infty} \frac{W_n((x+t)^2;a-it,b-it,\ac+it,\bc+it)}{(-2t)^nn!}=p_n(x;
a,b,\ac,\bc).
\end{equation}
%Using this, we derive the connection coefficients for the continuous
%Hahn polynomials with one free parameter.

In the following results for continuous Hahn polynomials, we assume the 
restriction $\Im a=\Im b=\Im c.$ 
This is because in the general case, one can not transform 
the ${}_3F_2$ in the connection relation with one free parameter (See (4.3) below) 
to a simple product of gamma functions.
The case $\Im a=\Im b=\Im c$ is the most general complex solution to the
problem of obtaining a generalized generating function for continuous Hahn polynomials
using a connection relation with one free parameter.
It is interesting to note however that the case $\Im a=\Im b=\Im c$ can be reduced to the
case $\Im a=\Im b=\Im c=0,$ because the change of $a,b,c>0$ to
$a+ih,b+ih,c+ih$ only leads to a shift in the $x$-variables of polynomials involved.

\begin{thm}\label{CH:l1}
Let $a,b,c\in\C$ such that $\Re a>0$, $\Re b>0$, $\Re c>0$ and
$\Im a = \Im b = \Im c$.     Then
\begin{eqnarray}\label{CH:specialconnect} 
 p_n(x;a,b,\ac,\bc)=\sum_{k=0}^n a_{n,k} p_k(x;a,c,\ac,\cc),
\end{eqnarray}
where
$a_{n,k}=0$ if $n-k$ is odd and, if $n=k+2p$, $p\in\No$,
%$p=0,1,2,\dots$,
\begin{eqnarray*} 
a_{n,k}=
\frac{(-1)^p(n+\acc+\bcc-1)_k(k+\acc)_{n-k}(k+a+\bc)_{n-k}(b-c)_p}
{4^pp!(k+\acc+\ccc-1)_k(k+a+\cc+\frac12)_p(a+\bc+k)_p}. 
\end{eqnarray*}
\end{thm}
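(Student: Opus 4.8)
The plan is to descend from the Wilson polynomials via the limit relation \eqref{CH:limit}, applying the three–free–parameter Wilson connection relation \eqref{W:connect2} in the same spirit in which the continuous dual Hahn connection \eqref{CDH:connect} was extracted from \eqref{W:connect2}. By \eqref{CH:limit}, both sides of \eqref{CH:specialconnect} are $t\to\infty$ limits of Wilson objects: $p_n(x;a,b,\ac,\bc)$ comes from $W_n((x+t)^2;a-it,b-it,\ac+it,\bc+it)/((-2t)^nn!)$, and $p_k(x;a,c,\ac,\cc)$ from the same expression with the pair $b-it,\bc+it$ replaced by $c-it,\cc+it$. Accordingly I would apply \eqref{W:connect2} with its invariant parameter (the one called $a$ there) specialised to $a-it$ and with the changing triple taken to be $(b-it,\ac+it,\bc+it)$ on the left and $(c-it,\ac+it,\cc+it)$ on the right. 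The decisive point is that the entry $\ac+it$ is common to both triples, so a numerator and a denominator parameter of the ${}_5F_4$ coincide and cancel, collapsing it to a ${}_4F_3$.

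Next I would pass to the limit $t\to\infty$ term by term; the $k$–sum is finite and the ${}_4F_3$ terminates because of its $k-n$ entry, so this is legitimate. Using $(z-2it)_m\sim(-2it)^m$ together with the normalising factor produced by $\binom{n}{k}(-2t)^kk!/((-2t)^nn!)=1/((n-k)!\,(-2t)^{n-k})$, the single $t$–dependent Pochhammer symbol in the prefactor contributes a factor $i^{n-k}$, while inside the ${}_4F_3$ the two large parameters $k+a+c-2it$ and $k+a+b-2it$ have ratio tending to $1$ and drop out. This leaves \eqref{CH:specialconnect} with
\[
a_{n,k}=\frac{i^{\,n-k}}{(n-k)!}\,
\frac{(n+\acc+\bcc-1)_k\,(k+\acc)_{n-k}\,(k+a+\bc)_{n-k}}{(k+\acc+\ccc-1)_k}\,
\Hyper{3}{2}{k-n,\,k+n+\acc+\bcc-1,\,k+a+\cc}{2k+\acc+\ccc,\,k+a+\bc}{1}.
\]
The prefactor already matches the claimed closed form, so the entire problem reduces to evaluating this terminating ${}_3F_2(1)$.

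This is exactly where the hypothesis $\Im a=\Im b=\Im c$ is needed, and it supplies precisely the right structure: it forces $2k+\acc+\ccc=2(k+a+\cc)$ and $k+a+\bc=\tfrac12\big((k-n)+(k+n+\acc+\bcc-1)+1\big)$, so one bottom entry is twice a top entry and the other is the half–sum of the remaining two tops plus $\tfrac12$. That is Watson's summation form, and since the series terminates, Watson's theorem applies as an identity of rational functions of the parameters. When $n-k$ is odd the resulting Gamma product carries the factor $1/\Gamma\!\big(\tfrac{1-(n-k)}{2}\big)=0$, giving $a_{n,k}=0$; when $n-k=2p$ I would convert the four Gamma quotients to Pochhammer symbols, use $\Gamma(\tfrac12-p)=(-4)^pp!\sqrt\pi/(2p)!$ to absorb $\sqrt\pi$ and generate the $4^pp!$, and rewrite the surviving numerator factor through $(c-b+1-p)_p=(-1)^p(b-c)_p$. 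Together with $i^{2p}=(-1)^p$ this yields the stated $a_{n,k}$.

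The main obstacle I anticipate is the bookkeeping of the confluent limit: correctly matching the $\pm it$ shifts to the Wilson parameters, tracking the $(-2t)^nn!$ normalisation, and making sure the ${}_5F_4\to{}_4F_3\to{}_3F_2$ reductions happen in the right order relative to the limit. The subsequent step — recognising Watson's form and massaging its Gamma product into the displayed Pochhammer expression — is mechanical but error prone. At the end I would verify that under $\Im a=\Im b=\Im c$ one has $\tfrac12(\acc+\bcc)=a+\bc$, $\tfrac12(\acc+\ccc)=a+\cc$, and $\cc-\bc=c-b$, since these are exactly the identities that make the reduced Gamma product agree on the nose with the coefficient claimed in the theorem.
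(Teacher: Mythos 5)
Your proposal is correct and follows essentially the same route as the paper's own proof: substitute the shifted parameters into \eqref{W:connect2}, take the $t\to\infty$ limit (with the common entry $\ac+it$ cancelling a numerator against a denominator parameter, so that the ${}_5F_4$ collapses to the terminating ${}_3F_2$ of the paper's intermediate relation \eqref{CH:connect}), and then evaluate that ${}_3F_2(1)$ in closed form under $\Im a=\Im b=\Im c$, with the reciprocal gamma factor $1/\Gamma(\frac12(k-n+1))$ annihilating the odd $n-k$ terms and the $\Gamma(\frac12-p)$ identity producing the $(-1)^p4^pp!$ and $(b-c)_p$ factors. The only difference is cosmetic nomenclature: the classical summation you invoke as Watson's theorem is exactly the formula the paper displays as \eqref{CH:Whipple} (which it calls ``Whipple's sum''), so the two arguments coincide step for step.
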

\begin{proof}
Let $a \mapsto a-it$, $b \mapsto
b-it$, $c \mapsto \ac+it$, $d \mapsto \bc+it$, $f \mapsto c-it$, $h \mapsto \cc+it$,
and $x \mapsto x+t$ in \eqref{W:connect2}. Divide both sides by $(-2t)^nn!$, multiply
the right-hand side by $(-2t)^kk!/(-2t)^kk!$, and take the limit as $t \to \infty$. This
yields connection coefficients for the continuous Hahn polynomials with one free
parameter
\begin{eqnarray} \nonumber
p_n(x;a,b,\ac,\bc) && =\sum_{k=0}^n \frac{(n+\acc+\bcc-1)_k(k+\acc)_{n-k}(k+a+\bc)
_{n-k}}{(n-k)!(k+\acc+\ccc-1)_k}i^{n-k}p_k(x;a,c,\ac,\cc) \\
&& \hspace{2cm}\times \, \Hyper{3}{2}{k-n,k+n+\acc+\bcc-1,k+a+\cc}{2k+\acc+\ccc,k+a+\bc}{1}.
\label{CH:connect}
\end{eqnarray}

We further reduce this using Whipple's sum \cite[(16.4.7)]{NIST}
\begin{eqnarray} \label{CH:Whipple}
\hspace{-0.5cm}\Hyper{3}{2}{a',b',c'}{\frac{1}{2}(a'+b'+1),2c'}{1}=\frac{\sqrt{\pi}\,\Gamma(c'+\frac{1}
{2})\Gamma(\frac{1}{2}(a'+b'+1))\Gamma(c'+\frac{1}{2}(1-a'-b'))}{\Gamma(\frac{1}{2}(
a'+1))\Gamma(\frac{1}{2}(b'+1))\Gamma(c'+\frac{1}{2}(1-a'))\Gamma(c'+\frac{1}{2}(1-b'
))}.
\end{eqnarray}
To apply 
(\ref{CH:Whipple}) in (\ref{CH:connect}),
%this, 
we assume that $\Im a=\Im b=\Im c$. 
%Let $a_R=\Re a, b_R=\Re b, c_R=\Re c$. 
Then the hypergeometric series may be rewritten as
%\small
\begin{eqnarray*}
&&\Hyper{3}{2}{k-n,k+n+\acc+\bcc-1,k+a+\cc}{2(\Re a+\Re c+k),a+\bc+k}{1}\\
&&\hspace{4cm}=\Hyper{3}{2}{k-n,k+n+
2\Re a+2\Re b-1,k+\Re a+\Re c}{2(\Re a+\Re c+k),\Re a+\Re b+k}{1}.
\end{eqnarray*}
%\normalsize
Setting $a' = k-n$, $b'=k+n+2\Re a+2\Re b-1$, and $c'=k+\Re a+\Re c$, we apply \eqref{CH:Whipple}
and determine
\begin{eqnarray}
&& \hspace{-05mm}\Hyper{3}{2}{k-n,k+n+\acc+\bcc-1,k+a+\cc}{2(\Re a+\Re c+k),k+a+\bc}{1}=\frac{\sqrt{\pi}\,\Gamma(
k+\Re a+\Re c+\frac{1}{2})}{\Gamma(\frac{1}{2}(k-
n+1))\Gamma(\frac{1}{2}(k+n+2\Re a+2\Re b))} \nonumber\\
&& \hspace{40mm} \times \frac{\Gamma(\Re a+\Re b+k)\Gamma(\Re c-\Re b+1)}{\Gamma(\frac{1}{2}(k+n+2\Re a+2\Re c+1))
\Gamma(\frac{1}{2}(k-n+2+2\Re c-2\Re b))}.
\label{CH:Whipplesumim}
\end{eqnarray}
It follows from \eqref{CH:connect}, \eqref{CH:Whipplesumim} that $a_{n,k}=0$ if $n-k$ is odd. If $n=k+2p$ with $p\in\No$,
%$p=0,1,2,\dots$, 
then
\begin{eqnarray}\label{CH:Whipple2}
\hspace{-0.4cm}\Hyper{3}{2}{k-n,k+n+\acc+\bcc-1,k+a+\cc}{2(\Re a+\Re c+k),k+a+\bc}{1}=\frac{(b-c)_p(2p)!}{(k+a+\cc+\frac12)_p(a+\bc+k)_p p! 4^p}.
\end{eqnarray}
Substituting (\ref{CH:Whipple2})
%this result 
into \eqref{CH:connect} with simplification completes the proof.
%\hfill\qed
\end{proof}

We derive a bound for the continuous Hahn polynomials.

\begin{lemma}\label{CH:l2}
Let $a,b\in\C$ with positive real part, and $x\in\R$. Then, for all $n\in\No$,
%$n=0,1,2,\dots$,
\begin{eqnarray}\label{CH:bound}
|p_n(x;a,b,\ac,\bc)|\le n!(1+n)^\sigma,
\end{eqnarray}
where $\sigma$ is a constant independent of $n$.
\end{lemma}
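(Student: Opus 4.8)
The plan is to imitate the proof of the continuous dual Hahn bound \eqref{CDH:bound}: turn a generating function into an explicit finite-sum representation of $p_n(x;a,b,\ac,\bc)$ and estimate that sum termwise using \eqref{W:bound1} and \eqref{W:bound2}. The generating function I want is of \emph{mixed} type, an elementary factor (a power of $1-t$) times a single hypergeometric series in the generating variable $t$, in the same spirit as the continuous dual Hahn generating function behind \eqref{CDH:bound} and as the Wilson generating function \eqref{W:4GF}; for the continuous Hahn polynomials such a generating function is available from \cite[\S9.4]{Koekoeketal}, or by pushing \eqref{W:4GF} through the limit relation \eqref{CH:limit}. Writing it as $\sum_{n\ge0}p_n(x;a,b,\ac,\bc)\,t^n/D_n$, where $D_n$ equals, up to a factor independent of $n$, the ratio $(\acc)_n(a+\bc)_n/(\acc+\bcc-1)_n$, and expanding the two analytic factors into power series, the Cauchy product gives
\[
 p_n(x;a,b,\ac,\bc)=D_n\sum_{k=0}^n u_k\,v_{n-k},
\]
with $u_k$ the coefficients of the hypergeometric factor and $v_{n-k}$ those of the power factor, each a ratio of Pochhammer symbols in $a\pm ix,\ b\pm ix$ over a factorial.

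I would then estimate each ingredient. Every numerator Pochhammer symbol $(v)_j$ is handled by \eqref{W:bound2}, giving $|(v)_j|\le j!\,(1+j)^{|v|}$, and every denominator symbol $(w)_j$ with $\Re w>0$ by \eqref{W:bound1}; after the factorials cancel, each summand is bounded by a product of polynomial factors $(1+k)^{\tau_1}(1+n-k)^{\tau_2}$, so that $\sum_{k=0}^n|u_k|\,|v_{n-k}|\le (1+n)^{\tau_1+\tau_2+1}$ up to a constant. For the prefactor, \eqref{W:bound2} gives $|(\acc)_n(a+\bc)_n|\le (n!)^2(1+n)^{\acc+|a+\bc|}$, while writing $(\acc+\bcc-1)_n=(\acc+\bcc-1)(\acc+\bcc)_{n-1}$ and applying \eqref{W:bound1} to $(\acc+\bcc)_{n-1}$ shows $1/|(\acc+\bcc-1)_n|\le C(1+n)^2/n!$; hence $|D_n|\le n!\,(1+n)^{\sigma_0}$. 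Multiplying the two estimates yields $|p_n(x;a,b,\ac,\bc)|\le n!\,(1+n)^{\sigma}$, which is \eqref{CH:bound}. Note that $\sigma$ depends on $a,b$ and on the fixed value of $x$ through the exponents $|a\pm ix|$ and $|b\pm ix|$, but not on $n$, which is exactly what the lemma requires.

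The main obstacle is the bookkeeping of factorials: I must arrange that exactly one power of $n!$ survives. The trap is a generating function whose $t^n$ coefficient is a convolution of two sequences that both decay like $1/k!$ — this is what the product of two confluent series produces — because its absolute value is then $\sim 2^n/n!$ and, against a prefactor of size $(n!)^2$, yields only the far weaker bound $n!\,2^n$. Using the mixed form, in which one convolved factor is merely polynomially large, keeps the convolution polynomial; and the ratio structure of $D_n$ (equivalently, the explicit $1/n!$ in the definition of $p_n$, which is absent for the continuous dual Hahn polynomials) reduces the factorial content of the prefactor to a single $n!$, so that the bound is $n!\,(1+n)^\sigma$ rather than $(n!)^2(1+n)^\sigma$. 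A small loose end is that $\acc+\bcc-1$ need not be positive, but the factorization $(\acc+\bcc-1)_n=(\acc+\bcc-1)(\acc+\bcc)_{n-1}$ reduces the bound to $(\acc+\bcc)_{n-1}$, to which \eqref{W:bound1} applies, affecting only the value of $\sigma$.
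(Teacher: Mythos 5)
Your overall strategy --- convert a generating function into an explicit finite-sum representation of $p_n$ and bound it termwise with \eqref{W:bound1}--\eqref{W:bound2} --- is the right one, and you correctly diagnose why \eqref{CH:1GF} is a trap. But the plan has a genuine gap: the ``mixed'' generating function it needs does not exist for the continuous Hahn polynomials. The generating functions in \cite[\S 9.4]{Koekoeketal} are the two products of confluent series (one of which is \eqref{CH:1GF}) and \eqref{CH:3GF}; the latter has precisely the normalization $D_n=(\acc)_n(a+\bc)_n/(\acc+\bcc-1)_n$ you name, but its ${}_3F_2$ has argument $-4\rho/(1-\rho)^2$, not $\rho$, and pushing \eqref{W:4GF} through the limit \eqref{CH:limit} again produces \eqref{CH:3GF}: the quadratic argument survives the limit. (Structurally, mixed generating functions with argument proportional to $\rho$ occur for families in which $n$ enters the defining hypergeometric series only through $-n$, as for the continuous dual Hahn and Meixner--Pollaczek polynomials; for continuous Hahn, as for Wilson and Jacobi, $n$ also enters through the parameter $n+\acc+\bcc-1$.) If you expand \eqref{CH:3GF} in powers of $\rho$ anyway, the coefficient of $\rho^n$ is
\[
\sum_{j=0}^n \frac{(\Re a+\Re b-\frac{1}{2})_j\,(\Re a+\Re b)_j\,(a+ix)_j}{(\acc)_j\,(a+\bc)_j\,j!}\,(-4)^j\,
\frac{(2j+\acc+\bcc-1)_{n-j}}{(n-j)!},
\]
which is not a convolution of two polynomially controlled sequences: the $j$-th term carries $4^j$ and a binomial-type factor, so the terms are exponentially large in $n$ (already the $j=n$ term has modulus of order $4^n$ up to polynomial factors), and the coefficient is polynomially bounded only because of cancellation between terms. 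Termwise absolute estimation therefore gives at best $|p_n|\le n!\,C^n$ with $C>1$, the same defect you set out to avoid. (A further wrinkle: when $\Re a+\Re b=\frac{1}{2}$, $(\acc+\bcc-1)_n$ vanishes for $n\ge 1$ and \eqref{CH:3GF} degenerates to a triviality, so it cannot control $p_n$ at all in that case.)

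The paper obtains the needed finite sum from a different source entirely: Wilson's formula \cite[(2.2)]{Wilson91},
\[
W_n(x^2;a,b,c,d)=n!\sum_{k=0}^n u_k(ix)\,u_{n-k}(-ix)\,\frac{2ix-n+2k}{2ix},\qquad
u_k(x)=\frac{(a+x)_k(b+x)_k(c+x)_k(d+x)_k}{k!\,(1+2x)_k},
\]
which under the limit \eqref{CH:limit} becomes
\[
p_n(x;a,b,\ac,\bc)=i^n\sum_{k=0}^n(-1)^k\,\frac{(a+ix)_k(b+ix)_k(\ac-ix)_{n-k}(\bc-ix)_{n-k}}{k!\,(n-k)!}.
\]
Here each summand has four Pochhammer symbols against $k!\,(n-k)!$, so \eqref{W:bound2} bounds it by $k!\,(n-k)!\,(1+k)^{\sigma}(1+n-k)^{\sigma}\le n!\,(1+n)^{2\sigma}$ with $\sigma=|a+ix|+|b+ix|$, and summing over $k$ gives \eqref{CH:bound} with exponent $2\sigma+1$. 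Note that this identity is the diagonal of a formal product of two divergent ${}_2F_0$ series; it cannot be produced by Cauchy-multiplying convergent power series, which is why your route cannot be repaired to reach it. The missing idea is this specific input from \cite{Wilson91}, used in place of the nonexistent mixed generating function.
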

\begin{proof}
Wilson (1991) \cite[(2.2)]{Wilson91} showed that
\begin{eqnarray}\label{CH:Wilson}
W_n(x^2;a,b,c,d)=n! \sum_{k=0}^n u_k(ix)u_{n-k}(-ix)\frac{2ix-n+2k}{2ix} ,
\end{eqnarray}
where
\begin{eqnarray*}
u_k(x):=\frac{(a+x)_k(b+x)_k(c+x)_k(d+x)_k}{k!(1+2x)_k} .
\end{eqnarray*}
Applying the limit relation \eqref{CH:limit} to \eqref{CH:Wilson}, we find
\begin{eqnarray}\label{CH:Wilson2}
p_n(x;a,b,\ac,\bc)=i^n \sum_{k=0}^n (-1)^k \frac{(a+ix)_k(b+ix)_k(\ac-ix)_{n-k}(\bc-ix)_{n-k}}{k!(n-k)!} .
\end{eqnarray}
Using \eqref{W:bound2}, we obtain
\begin{eqnarray*}
&& |p_n(x;a,b,\ac,\bc)|\le \sum_{k=0}^n (1+k)^\sigma k! (1+n-k)^\sigma (n-k)!\\
&& \hspace{2.7cm} \le (1+n)^{2\sigma} \sum_{k=0}^n k!(n-k)!\le (1+n)^{2\sigma+1} n!,
\end{eqnarray*}
where $\sigma=|a+ix|+|b+ix|$.
%\hfill\qed
\end{proof}

\begin{thm} \label{CH:t1}
Let $\rho\in\C$, $x \in \R$, and $a,b,c \in \C$ such that $\Re a>0$, $\Re b>0$, $\Re c>0$ and
$\Im a = \Im b = \Im c$. Then
\begin{eqnarray}
&& \nonumber \Hyper{1}{1}{a+ix}{\acc}{-i\rho}\Hyper{1}{1}{\bc-ix}{\bcc}{i\rho}
= \sum_{k=0}^\infty \frac{(k+\acc+\bcc-1)_k}{(\acc)_k(\bcc)_k(k+\acc+\ccc-1)_k} \rho^k  p_k(x,a,c,\ac,\cc)\\ \nonumber
&& \hspace{20mm} \times \,
\Hyper{4}{5}{\frac{1}{2}(\Re a+\Re b+k),\frac{1}{2}(\Re a+\Re b+k+1),
%\frac{1}{2}(\acc+\bcc-1)+k
\Re a+\Re b+k-\frac12
,\Re b-\Re c}
{\Re a+\Re b+\frac{k-1}{2},\Re a+\Re b+\frac{k}2,\Re b+\frac{k}2,\Re b+\frac{k+1}{2},
\Re a+\Re c+k+\frac12}{\frac{-\rho^2}{4}}.
\nonumber\label{CH:f1}
\end{eqnarray}
\end{thm}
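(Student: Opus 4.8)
The plan is to reuse the series-rearrangement scheme of the earlier proofs, now built on the one-free-parameter connection relation \eqref{CH:specialconnect} of Theorem \ref{CH:l1}. The natural starting point is the generating function \cite[(9.4.11)]{Koekoeketal},
\[
\Hyper{1}{1}{a+ix}{\acc}{-i\rho}\Hyper{1}{1}{\bc-ix}{\bcc}{i\rho}
=\sum_{n=0}^\infty\frac{p_n(x;a,b,\ac,\bc)}{(\acc)_n(\bcc)_n}\,\rho^n ,
\]
into which I substitute \eqref{CH:specialconnect}, obtaining the double sum $\sum_{n}c_n\sum_{k}a_{n,k}\,p_k(x;a,c,\ac,\cc)$ with $c_n=\rho^n/\bigl((\acc)_n(\bcc)_n\bigr)$ and $a_{n,k}$ the explicit coefficients of Theorem \ref{CH:l1}. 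A reassuring consistency check: setting $c=b$ turns \eqref{CH:specialconnect} into the identity and collapses the ${}_4F_5$ to $1$ (its numerator parameter $\Re b-\Re c$ vanishes), so the right-hand side of the asserted identity reduces exactly to this generating function.

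Before interchanging the summations I would verify $\sum_{n}|c_n|\sum_{k}|a_{n,k}|\,|p_k(x;a,c,\ac,\cc)|<\infty$. This is the routine part: \eqref{W:bound1} bounds $|c_n|$ above by a constant times $(1+n)^2|\rho|^n/(n!)^2$, the estimates \eqref{W:bound2}--\eqref{W:bound4} control the ratio of Pochhammer symbols in the explicit $a_{n,k}$, and \eqref{CH:bound} of Lemma \ref{CH:l2} supplies $|p_k(x;a,c,\ac,\cc)|\le k!(1+k)^\sigma$. I expect these to give absolute convergence at least on some disk $|\rho|<r$. Since the left-hand side of the claimed identity is entire in $\rho$ (a product of two entire ${}_1F_1$'s) and each ${}_4F_5$ in $-\rho^2/4$ is also entire, with the $k$-series converging locally uniformly, analytic continuation then extends the identity to all $\rho\in\C$ as stated.

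The heart of the argument is the evaluation of the rearranged inner sum. After interchanging orders and using $a_{n,k}=0$ for odd $n-k$, I fix $k$ and sum over $p\in\No$ with $n=k+2p$. Factoring out the $p=0$ term reproduces the prefactor $(k+\acc+\bcc-1)_k\rho^k/\bigl((\acc)_k(\bcc)_k(k+\acc+\ccc-1)_k\bigr)$, and the residual $p$-sum must be identified as the ${}_4F_5$. The manipulations I anticipate are: splitting $(\acc)_{k+2p}=(\acc)_k(k+\acc)_{2p}$ (and likewise for $\bcc$), which cancels the factor $(k+\acc)_{n-k}$ in $a_{n,k}$ against the denominator of $c_{k+2p}$; applying the duplication formula $(\alpha)_{2p}=4^p(\tfrac{\alpha}{2})_p(\tfrac{\alpha+1}{2})_p$ to $(k+a+\bc)_{2p}$ and to $(k+\bcc)_{2p}$; and rewriting the $p$-dependent factor via $(k+2p+\acc+\bcc-1)_k/(k+\acc+\bcc-1)_k=(2k+\acc+\bcc-1)_{2p}/(k+\acc+\bcc-1)_{2p}$, to which the duplication formula is applied once more. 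Here the hypothesis $\Im a=\Im b=\Im c$ is essential: it forces $a+\bc=\Re a+\Re b$, $a+\cc=\Re a+\Re c$, and $b-c=\Re b-\Re c$ to be real, so that after the cancellation of $(\Re a+\Re b+k)_p$ and the collection of the scalar factors $\rho^{2p}(-1)^p4^{-p}=(-\rho^2/4)^p$, the surviving four numerator and five denominator Pochhammer symbols align precisely with the parameters of the ${}_4F_5$.

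I expect the main obstacle to be exactly this last bookkeeping: converting the $p$-dependent Pochhammer $(k+2p+\acc+\bcc-1)_k$ into a ratio of length-$2p$ symbols and then performing the half-integer splitting via duplication. An off-by-one in the shifts would misalign the parameters $\tfrac12(\Re a+\Re b+k)$, $\Re a+\Re b+\tfrac{k-1}{2}$, and the rest, destroying the match. The role of $\Im a=\Im b=\Im c$ is structural rather than incidental: without it the parameters do not pair into real arguments and no closed ${}_pF_q$ survives, consistent with the remark preceding Theorem \ref{CH:l1}.
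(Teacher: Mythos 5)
Your proposal is correct and follows essentially the same route as the paper: substitute the connection relation \eqref{CH:specialconnect} into the generating function \eqref{CH:1GF}, justify the interchange of sums with the bounds \eqref{W:bound1}--\eqref{W:bound4} and \eqref{CH:bound}, then evaluate the inner sum over $p$ (with $n=k+2p$, using $a_{n,k}=0$ for odd $n-k$) by exactly the Pochhammer splitting and duplication-formula bookkeeping you describe, which does recover the stated ${}_4F_5$. The only inessential difference is that the paper's estimate, which ends in $\sum_n(1+n)^{\sigma+\sigma_2+7/2}|\rho|^n/n!$, yields absolute convergence for every $\rho\in\C$ at once, so the analytic-continuation fallback you hold in reserve is not needed.
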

\begin{proof}
A generating function for the continuous Hahn polynomials is given
by \cite[(9.4.11)]{Koekoeketal}
\begin{eqnarray} \label{CH:1GF}
\Hyper{1}{1}{a+ix}{\acc}{-i\rho}\Hyper{1}{1}{\bc-ix}{\bcc}{i\rho}=\sum_{n=0}^\infty
\frac{p_n(x;a,b,\ac,\bc)}{(\acc)_n(\bcc)_n}\rho^n.
\end{eqnarray}
Substituting \eqref{CH:specialconnect} in \eqref{CH:1GF} gives the double sum
\begin{eqnarray*}
\Hyper{1}{1}{a+ix}{\acc}{-i\rho}\Hyper{1}{1}{\bc-ix}{\bcc}{i\rho}=\sum_{n=0}^\infty
c_n\sum_{k=0}^n a_{n,k}p_k(x;a,c,\ac,\cc),
\end{eqnarray*}
where
\begin{eqnarray*}
c_n = \frac{\rho^n}{(\acc)_n(\bcc)_n},
\end{eqnarray*}
and the $a_{n,k}$ are the coefficients satisfying \eqref{CH:specialconnect}. 

We wish to reverse the order of summation, so we need to show that
\begin{eqnarray*}
\sum_{n=0}^\infty|c_n|\sum_{k=0}^n|a_{n,k}||p_k(x;a,c,\ac,\cc)|<\infty.
\end{eqnarray*}
Using \eqref{W:bound1}, we determine
\begin{eqnarray*}
|c_n| \le K_1\frac{(1+n)^2|\rho|^n
}{(n!)^2},
\end{eqnarray*}
%where $K_1= \max\{1,(\acc\bcc)^{-1}\}$.
where $K_1= \max\{1,(4\Re a\Re b)^{-1}\}$.
Using the bounds \eqref{W:bound1}, \eqref{W:bound2}, \eqref{W:bound3}, \eqref{W:bound4}, we estimate 
\begin{eqnarray}\label{CH:bounda}
|a_{n,k}|\le K_2(1+n)^{\sigma_2} \frac{n!k!4^k}{(2k)!} .
\end{eqnarray}
where we used that $\binom{2m}{m}\le 4^m$ with $m=p+k$. By these estimates and \eqref{CH:bound},
\begin{eqnarray*}
\sum_{n=0}^\infty |c_n|\sum_{k=0}^n |a_{n,k}p_k|&\le & K_1K_2\sum_{n=0}^\infty (1+n)^{\sigma+\sigma_2+2}\frac{|\rho|^n}{n!}\sum_{k=0}^n \frac{4^k (k!)^2}{(2k)!}\\
&\le & K_3\sum_{n=0}^\infty (1+n)^{\sigma+\sigma_2+7/2}\frac{|\rho|^n}{n!}<\infty
\end{eqnarray*}
for any $\rho\in\C$, where we used that the sequence $4^{-k}\binom{2k}{k}\sqrt{k}$ converges to $\pi^{-1/2}$.

Therefore, we are justified to reverse summation in the double sum, and we obtain
\[ \sum_{n=0}^\infty c_n\sum_{k=0}^n a_{n,k}p_k(x;a,c,\ac,\cc)=\sum_{k=0}^\infty p_k(x;a,c,\ac,\cc)\sum_{m=0}^\infty c_{m+k}a_{m+k,k} .\]
Since $a_{m+k,k}=0$ for odd $m$, we may set $m=2p$. Then using $a_{2p+k,k}$ as given by Theorem \ref{CH:l1},
we obtain the desired result after some simplification.
%\hfill\qed
\end{proof}

\begin{thm} \label{CH:t2}
Let $\rho\in\C$, $|\rho|<1$, $x \in \R,$  and $a,b,c \in \C$ such that $\Re a>0$, $\Re b>0$, $\Re c>0$ and
$\Im a = \Im b = \Im c$. Then
\begin{eqnarray*} \nonumber
&& (1-\rho)^{1-\acc-\bcc}
\,\Hyper{3}{2}{\Re a+\Re b-\frac12
%,\frac{1}{2}(\acc+\bcc-1)
,\Re a+\Re b
%,\frac{1}{2}(\acc+\bcc)
,a+ix}{\acc,a+\bc}{-\frac{4\rho}{(1-\rho)^2}}\\
&=&\sum_{k=0}^\infty \frac{(\acc+\bcc-1)_{2k}}{(\acc)_k(\Re a+\Re b)_k(\acc+\ccc+k-1)_k}
  \,\Hyper{2}{1}{\Re a+\Re b+k-\frac12,\Re b-\Re c}{\Re a+\Re c+k+\frac{1}{2}}{\rho^2}
 \nonumber \\
&& \hspace{15mm} \times \left(-i\rho\right)^kp_k(x;a,c,\ac,\cc).\label{CH:f2}
\end{eqnarray*}
\end{thm}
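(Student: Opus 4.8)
The plan is to apply the series rearrangement technique of Theorems~\ref{W:t2} and~\ref{CH:t1} to a further generating function for the continuous Hahn polynomials, namely the analogue of the Wilson generating function \eqref{W:4GF}. This is \cite[(9.4.12)]{Koekoeketal}, whose left-hand side is exactly the expression on the left of the claimed identity:
\[
(1-\rho)^{1-\acc-\bcc}\,\Hyper{3}{2}{\Re a+\Re b-\frac12,\Re a+\Re b,a+ix}{\acc,a+\bc}{-\frac{4\rho}{(1-\rho)^2}}=\sum_{n=0}^\infty \frac{(\acc+\bcc-1)_n}{(\acc)_n(a+\bc)_n}(-i\rho)^n\,p_n(x;a,b,\ac,\bc).
\]
Denoting the $n$th coefficient by $c_n$, I would substitute the one-free-parameter connection relation \eqref{CH:specialconnect} of Theorem~\ref{CH:l1} for $p_n(x;a,b,\ac,\bc)$, obtaining the double sum $\sum_{n=0}^\infty c_n\sum_{k=0}^n a_{n,k}\,p_k(x;a,c,\ac,\cc)$, and then justify interchanging the order of summation before evaluating the inner sum in closed form.

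For the interchange I would bound $|c_n|$ using \eqref{W:bound1} and \eqref{W:bound2}: by \eqref{W:bound2} the numerator satisfies $|(\acc+\bcc-1)_n|\le n!(1+n)^{|\acc+\bcc-1|}$, while by \eqref{W:bound1} each of $(\acc)_n$ and $(a+\bc)_n$ is bounded below by a positive constant times $n!/(1+n)$ (their parameters having positive real part), so that $|c_n|\le K_1(1+n)^{\sigma_1}|\rho|^n/n!$ for a suitable $\sigma_1$. Combining this with the bound \eqref{CH:bounda} on $|a_{n,k}|$ and the bound \eqref{CH:bound} of Lemma~\ref{CH:l2} on $|p_k|$, the single factor $n!$ in \eqref{CH:bounda} cancels the $1/n!$ in $|c_n|$, and the inner sum over $k$ of $\frac{(k!)^2 4^k}{(2k)!}(1+k)^\sigma$ grows only polynomially in $n$, since $4^{-k}\binom{2k}{k}\sqrt{k}\to\pi^{-1/2}$. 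The double sum of absolute values is then dominated by $\sum_{n=0}^\infty(1+n)^{\sigma_1+\sigma_2+\sigma+3/2}|\rho|^n$, which converges precisely for $|\rho|<1$. This pins down the hypothesis $|\rho|<1$; in contrast to Theorem~\ref{CDH:t1}, no passage through a smaller disk and subsequent analytic continuation is required.

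Having justified the reversal, I would rewrite the sum as $\sum_{k=0}^\infty p_k(x;a,c,\ac,\cc)\sum_{m=0}^\infty c_{k+m}a_{k+m,k}$ and, since $a_{k+m,k}=0$ for odd $m$ by Theorem~\ref{CH:l1}, set $m=2p$. The crux --- and the step I expect to be the main obstacle --- is to show that $\sum_{p=0}^\infty c_{k+2p}\,a_{k+2p,k}$ collapses to the stated Gaussian hypergeometric function. The $p=0$ term gives $c_k a_{k,k}=\frac{(\acc+\bcc-1)_{2k}}{(\acc)_k(\Re a+\Re b)_k(\acc+\ccc+k-1)_k}(-i\rho)^k$, using $(\acc+\bcc-1)_k(k+\acc+\bcc-1)_k=(\acc+\bcc-1)_{2k}$ and the hypothesis $\Im a=\Im b=\Im c$, which makes $a+\bc=\Re a+\Re b$, $a+\cc=\Re a+\Re c$, and $b-c=\Re b-\Re c$. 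It then remains to check that the ratio of consecutive terms in $p$ equals
\[
\frac{(\Re a+\Re b+k-\frac12+p)(\Re b-\Re c+p)}{(\Re a+\Re c+k+\frac12+p)(p+1)}\,\rho^2,
\]
which is exactly the term ratio of $\Hyper{2}{1}{\Re a+\Re b+k-\frac12,\Re b-\Re c}{\Re a+\Re c+k+\frac12}{\rho^2}$. To obtain this I would split each Pochhammer symbol of the form $(\alpha)_{k+2p}=(\alpha)_k(\alpha+k)_{2p}$ and apply the duplication formula $(\alpha)_{2p}=4^p(\frac{\alpha}{2})_p(\frac{\alpha+1}{2})_p$; the resulting powers $4^p$ cancel the $4^p$ in $a_{k+2p,k}$, the factor $(-i\rho)^{k+2p}=(-i\rho)^k(-1)^p\rho^{2p}$ from $c_{k+2p}$ supplies $(-i\rho)^k$ together with a $(-1)^p$ that cancels the one in $a_{k+2p,k}$ and leaves $(\rho^2)^p$, and all parameters reduce to the real quantities listed above. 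The only genuinely delicate part is the bookkeeping of these cancellations; once the term ratio is confirmed, summing the series yields the stated identity for $|\rho|<1$.
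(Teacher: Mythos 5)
Your proposal is correct and takes essentially the same route as the paper's proof: substitute the connection relation \eqref{CH:specialconnect} into the generating function \eqref{CH:3GF}, justify the interchange of summation via the bounds \eqref{CH:bound} and \eqref{CH:bounda} (which, as you note, is exactly what forces the hypothesis $|\rho|<1$ here, unlike Theorem \ref{CH:t1}), and then collapse the inner sum over even $m=2p$ using the explicit coefficients of Theorem \ref{CH:l1} and the Pochhammer duplication formula into the stated ${}_2F_1$ in $\rho^2$ --- indeed your write-up of this last step is more explicit than the paper's, which merely refers back to the manipulations of Theorem \ref{CH:t1}. The only slip is bibliographic: the generating function you quote (correctly) is \cite[(9.4.13)]{Koekoeketal}, not (9.4.12).
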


\begin{proof}
Another generating function for the continuous Hahn polynomials
is given by \cite[(9.4.13)]{Koekoeketal}
\begin{eqnarray} \nonumber
&& (1-\rho)^{1-\acc-\bcc}\Hyper{3}{2}{\Re a+\Re b-\frac{1}{2},\Re a+\Re b,
a+ix}{\acc,a+\bc}{-\frac{4\rho}{(1-\rho)^2}} \\
&& \hspace{30mm} = \sum_{n=0}^\infty \frac{(\acc+\bcc-1)_n}{(\acc)_n(a+\bc)_ni^n}
p_n(x;a,b,\ac,\bc)\rho^n. \label{CH:3GF}
\end{eqnarray}
Note that the left-hand side of \eqref{CH:3GF} is an analytic function of $\rho$ in the unit disk $|\rho|<1$, and \eqref{CH:3GF} is valid for $|\rho|<1$.
We substitute $p_n(x;a,b,\ac,\bc)$ in the generating function \eqref{CH:3GF} using \eqref{CH:specialconnect}.
This gives the double sum
\begin{eqnarray*}
 (1-\rho)^{1-\acc-\bcc}\Hyper{3}{2}{\Re a+\Re b-\frac{1}{2},\Re a+\Re b,
a+ix}{\acc,a+\bc}{-\frac{4\rho}{(1-\rho)^2}} = \sum_{n=0}^\infty c_n \sum_{k=0}^n
a_{n,k}p_k(x;a,c,\ac,\cc),
\end{eqnarray*}
where
\begin{eqnarray*}
c_n = \frac{(\acc+\bcc-1)_n}{(\acc)_n(a+\bc)_ni^n}\rho^n,
\end{eqnarray*}
and $a_{n,k}$ are the coefficients from \eqref{CH:specialconnect}.
Using \eqref{CH:bound} and \eqref{CH:bounda} we show that
\begin{eqnarray*}
\sum_{n=0}^\infty|c_n|\sum_{k=0}^n|a_{n,k}||p_k(x;a,c,\ac,\cc)| \le  K_4\sum_{n=0}^\infty (1+n)^{\sigma+\sigma_3+7/2}|\rho|^n<\infty
\end{eqnarray*}
provided that $|\rho|<1$.
Using (\ref{CH:Whipplesumim}) and manipulations as in the proof of Theorem \ref{CH:t1} gives the desired result
\eqref{CH:f2}.
%\hfill\qed
\end{proof}

%------------------------------------------------%
% Meixner-Pollaczek Polynomials                  %
%------------------------------------------------%
\section{Meixner-Pollaczek polynomials}

The Meixner-Pollaczek polynomials are defined as \cite[(9.7.1)]{Koekoeketal}
\begin{eqnarray} \label{MP:def}
P_n^{(\lambda)}(x;\phi):=\frac{(2\lambda)_n}{n!}e^{in\phi}\Hyper{2}{1}{-n,\lambda
+ix}{2\lambda}{1-e^{-2i\phi}}.
\end{eqnarray}
We obtain a connection relation for the Meixner-Pollaczek polynomials with one free
parameter.

\begin{lemma}\label{MP:l1}
Let $\lambda>0, \phi, \psi \in (0,\pi)$. Then
\begin{eqnarray} \label{MP:connect}
P_n^{(\lambda)}(x;\phi)=\frac{1}{\sin^n\psi}\sum_{k=0}^n\frac{(2\lambda+k)_{n-k}}
{(n-k)!}\sin^k\phi\sin^{n-k}(\psi-\phi)P_k^{(\lambda)}(x;\psi).
\end{eqnarray}
\end{lemma}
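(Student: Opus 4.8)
The plan is to prove \eqref{MP:connect} directly from the generating function for the Meixner--Pollaczek polynomials \cite[(9.7.11)]{Koekoeketal},
\[
G(t;\phi):=(1-e^{i\phi}t)^{-\lambda+ix}(1-e^{-i\phi}t)^{-\lambda-ix}=\sum_{n=0}^\infty P_n^{(\lambda)}(x;\phi)\,t^n,
\]
which is analytic in $t$ near the origin. The guiding idea is that changing the angle parameter $\phi\mapsto\psi$ should correspond to a linear fractional substitution in the generating variable together with an explicit power prefactor; once that substitution is pinned down, the connection coefficients drop out from a single binomial expansion. Concretely, I would seek analytic functions $A=A(t)$ and $s=s(t)$, with $A(0)=1$ and $s(0)=0$, such that $G(t;\phi)=A^{-2\lambda}G(s;\psi)$.

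The key step is to determine $A$ and $s$. Matching the two factors of $G$ forces the requirements
\[
1-e^{i\psi}s=\frac{1-e^{i\phi}t}{A},\qquad 1-e^{-i\psi}s=\frac{1-e^{-i\phi}t}{A}.
\]
Subtracting these isolates $s$, while adding them isolates $A$, and solving yields
\[
A=1-\frac{\sin(\psi-\phi)}{\sin\psi}\,t,\qquad s=\frac{\sin\phi}{\sin\psi}\,\frac{t}{A}.
\]
With these choices I would verify the two displayed identities by a short computation using $\sin(\psi-\phi)=\sin\psi\cos\phi-\cos\psi\sin\phi$ and $e^{\pm i\psi}=\cos\psi\pm i\sin\psi$, whereupon the numerators collapse to $1-e^{\pm i\phi}t$. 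Raising to the appropriate powers gives $(1-e^{i\psi}s)^{-\lambda+ix}(1-e^{-i\psi}s)^{-\lambda-ix}=A^{2\lambda}G(t;\phi)$, so the prefactor $A^{-2\lambda}$ cancels exactly and $G(t;\phi)=A^{-2\lambda}G(s;\psi)$ holds, as sought.

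It then remains to read off the coefficients. Expanding $G(s;\psi)=\sum_{k=0}^\infty P_k^{(\lambda)}(x;\psi)\,s^k$ and inserting $s=(\sin\phi/\sin\psi)\,t/A$ gives
\[
G(t;\phi)=\sum_{k=0}^\infty P_k^{(\lambda)}(x;\psi)\left(\frac{\sin\phi}{\sin\psi}\right)^{\!k} t^k\,A^{-2\lambda-k}.
\]
The binomial series $A^{-2\lambda-k}=\sum_{j=0}^\infty \frac{(2\lambda+k)_j}{j!}\big(\tfrac{\sin(\psi-\phi)}{\sin\psi}\big)^j t^j$ then produces, upon setting $n=k+j$, precisely the coefficient $\frac{(2\lambda+k)_{n-k}}{(n-k)!}\sin^k\phi\,\sin^{n-k}(\psi-\phi)\,\sin^{-n}\psi$ of $t^n$. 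Comparing this with the Maclaurin coefficient $P_n^{(\lambda)}(x;\phi)$ of $G(t;\phi)$ yields \eqref{MP:connect}.

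The main obstacle is the bookkeeping of convergence rather than the algebra. Since $G(\,\cdot\,;\psi)$ is analytic in the disc $|s|<1$ (its singularities sit at $|s|=1$), its Maclaurin coefficients satisfy $|P_k^{(\lambda)}(x;\psi)|\le C\rho^{-k}$ for each fixed $\rho<1$; hence for $|t|$ small enough that $A$ stays bounded away from $0$, the double series obtained from the binomial expansion converges absolutely and may be reorganized by total degree in $t$. Because the connection coefficients so obtained are independent of $t$, equating Maclaurin coefficients at $t=0$ gives the identity for every $n\in\No$, and the initial restriction to small $t$ is harmless. The creative step is the discovery of the substitution $(A,s)$; everything following it is routine.
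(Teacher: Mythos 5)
Your proof is correct, but it takes a genuinely different route from the paper's. The paper derives Lemma \ref{MP:l1} by descending the Askey scheme: it substitutes $x\mapsto x-t$, $a\mapsto\lambda+it$, $b\mapsto\lambda-it$, $c\mapsto t\cot\phi$, $d\mapsto t\cot\psi$ into the continuous dual Hahn connection relation \eqref{CDH:connect2}, renormalizes by $\left(\frac{t}{\sin\phi}\right)_n$ and $\left(\frac{t}{\sin\psi}\right)_k$, and lets $t\to\infty$ using the limit relation between $S_n$ and $P_n^{(\lambda)}$. You instead verify the functional identity $G(t;\phi)=A^{-2\lambda}G(s;\psi)$ with $A=1-t\sin(\psi-\phi)/\sin\psi$ and $s=t\sin\phi/(A\sin\psi)$ directly (your key computation amounts to $\sin(\psi-\phi)+e^{\pm i\psi}\sin\phi=e^{\pm i\phi}\sin\psi$, which checks out), then read off the connection coefficients from the binomial series for $A^{-2\lambda-k}$; your Cauchy-estimate justification of the diagonal rearrangement for small $|t|$ is sound, and comparing Maclaurin coefficients removes the smallness restriction. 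In fact your functional identity is exactly equation \eqref{MP:f1a} of the paper, which the authors remark ``can be verified by elementary but not completely trivial calculations'' and ``may be used to give a second proof of Lemma \ref{MP:l1}''; you have carried out precisely the second proof they allude to but do not write down. What each approach buys: the paper's limit argument reuses machinery already built (the CDH connection relation) and follows a template applicable throughout the Askey scheme, whereas yours is self-contained, needs no continuous dual Hahn input, and yields Theorem \ref{MP:t1} (equation \eqref{MP:f1}) as an immediate by-product rather than as a separate series-rearrangement theorem --- though it exploits the special closed product form of the generating function \eqref{MP:1GF} and so does not transfer to families lacking such a form. The one delicate point, which your restriction to small $|t|$ implicitly handles, is branch bookkeeping for the complex exponents $-\lambda\pm ix$: splitting $\left((1-e^{\pm i\phi}t)/A\right)^{-\lambda\pm ix}$ into a quotient of principal powers is legitimate because all quantities stay near $1$.
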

\begin{proof}
The Meixner-Pollaczek polynomials are obtained from the continuous dual
Hahn polynomials via the limit relation \cite[Section 9.7, Limit Relations]{Koekoeketal}
\begin{eqnarray*} \label{MP:limit}
\lim_{t\to\infty} \frac{S_n((x-t)^2;\lambda+it,\lambda-it,t\cot\phi)}{\left(\frac{
t}{\sin\phi}\right)_nn!}=P_n^{(\lambda)}(x;\phi).
\end{eqnarray*}
Letting $x \mapsto x-t, a \mapsto \lambda+it$, $b \mapsto \lambda-it$, $c \mapsto \phi$, and
$d \mapsto \psi$ in \eqref{CDH:connect2} gives
\begin{eqnarray*}
S_n((x-t)^2;\lambda+it,\lambda-it,t\cot\phi) && =\sum_{k=0}^n\binom{n}{k}(k+2\lambda
)_{n-k}(t(\cot\phi-\cot\psi))_{n-k} \\
&& \times S_k((x-t)^2;\lambda+it,\lambda-it,t\cot\psi).
\end{eqnarray*}

Dividing both sides of this equation by \small$\left(\frac{t}{\sin\phi}\right)_n$
\normalsize and multiplying the right-hand side by \small$\frac{\left(\frac{t}
{\sin\psi}\right)_k}{\left(\frac{t}{\sin\psi}\right)_k}$ \normalsize and taking the limit as
$t \to \infty$ gives the desired result.
%\hfill\qed
\end{proof}

We now derive an upper bound for the Meixner-Pollaczek polynomials.

\begin{lemma}\label{MP:l2}
Let $\lambda>0$, $\phi\in (0,\pi)$ and $x\in\R$. Then
\begin{eqnarray} \label{MP:bound}
|P_n^{(\lambda)}(x;\phi)|\le (1+n)^\sigma,
\end{eqnarray}
where $\sigma$ is a constant independent of $n$.
\end{lemma}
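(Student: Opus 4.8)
The plan is to mirror the strategy used for the continuous Hahn polynomials in Lemma~\ref{CH:l2}: produce an explicit finite-sum representation of $P_n^{(\lambda)}(x;\phi)$ in which each summand is a product of Pochhammer symbols, and then estimate term by term using \eqref{W:bound2}. First I would start from the generating function for the Meixner-Pollaczek polynomials \cite[(9.7.11)]{Koekoeketal},
\begin{equation*}
(1-e^{i\phi}\rho)^{-\lambda+ix}(1-e^{-i\phi}\rho)^{-\lambda-ix}=\sum_{n=0}^\infty P_n^{(\lambda)}(x;\phi)\rho^n.
\end{equation*}
Expanding each factor by the binomial series, $(1-e^{i\phi}\rho)^{-(\lambda-ix)}=\sum_{k=0}^\infty \frac{(\lambda-ix)_k}{k!}e^{ik\phi}\rho^k$ and likewise for the second factor, and forming the Cauchy product, I obtain the representation
\begin{equation*}
P_n^{(\lambda)}(x;\phi)=\sum_{k=0}^n \frac{(\lambda-ix)_k(\lambda+ix)_{n-k}}{k!\,(n-k)!}\,e^{i(2k-n)\phi}.
\end{equation*}
This is the exact analogue of \eqref{CH:Wilson2}, and because $\lambda>0$ and $x\in\R$ the two Pochhammer symbols are complex conjugates of one another.

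Next I would bound. Since $|e^{i(2k-n)\phi}|=1$, the triangle inequality followed by \eqref{W:bound2} gives, with $\tau:=|\lambda+ix|=\sqrt{\lambda^2+x^2}$,
\begin{equation*}
|P_n^{(\lambda)}(x;\phi)|\le \sum_{k=0}^n \frac{|(\lambda-ix)_k|}{k!}\,\frac{|(\lambda+ix)_{n-k}|}{(n-k)!}\le \sum_{k=0}^n (1+k)^\tau (1+n-k)^\tau \le (1+n)^{2\tau+1},
\end{equation*}
so that \eqref{MP:bound} holds with $\sigma=2\sqrt{\lambda^2+x^2}+1$.

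I do not expect a genuine obstacle here: the only subtlety is that the resulting $\sigma$ depends on $x$ (through $\tau$), which is consistent with the statement, since the lemma asserts independence from $n$ only -- exactly as in \eqref{CH:bound}. If one instead wished to bound directly from the definition \eqref{MP:def}, one could expand the terminating ${}_2F_1$ and use $|1-e^{-2i\phi}|=2\sin\phi$; the generating-function route is preferable because it delivers the conjugate-symmetric product form immediately and avoids estimating the ratio $(\lambda+ix)_k/(2\lambda)_k$.
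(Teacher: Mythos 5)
Your proof is correct and is essentially identical to the paper's: the authors also extract the representation $P_n^{(\lambda)}(x;\phi)=\sum_{k=0}^n \frac{(\lambda-ix)_k}{k!}\frac{(\lambda+ix)_{n-k}}{(n-k)!}e^{i\phi(2k-n)}$ from the generating function \cite[(9.7.11)]{Koekoeketal} and then estimate with \eqref{W:bound2}, obtaining $\sigma=2|\lambda+ix|+1$, which matches your $2\sqrt{\lambda^2+x^2}+1$ exactly. Your closing remark about $x$-dependence of $\sigma$ is also consistent with the paper's statement.
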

\begin{proof}
The generating
function \cite[(9.7.11)]{Koekoeketal}
\begin{eqnarray} \label{MP:1GF}
(1-e^{i\phi}\rho)^{-\lambda+ix}(1-e^{-i\phi}\rho)^{-\lambda-ix}=\sum_{n=0}^\infty
P_n^{(\lambda)}(x;\phi)\rho^n
\end{eqnarray}
leads to the representation
\begin{eqnarray*}
P_n^{(\lambda)}(x;\phi)=\sum_{k=0}^n \frac{(\lambda-ix)_k}{k!}\frac{(\lambda+ix)_{n-k}}{(n-k)!} e^{i\phi(2k-n)} .
\end{eqnarray*}
Straightforward estimation using \eqref{W:bound2} gives \eqref{MP:bound} with $\sigma=2|\lambda+ix|+1$.
%\hfill\qed
\end{proof}

\begin{thm}\label{MP:t1}
Let $\lambda>0$, $\psi,\phi \in (0,\pi)$, $x\in\R$, and $\rho\in\C$ such that
\begin{eqnarray}\label{MP:cond}
|\rho|(\sin\phi+|\sin(\psi-\phi)|)< \sin \psi .
\end{eqnarray}
Then
\begin{eqnarray}\label{MP:f1}
(1-e^{i\phi}\rho)^{-\lambda+ix}(1-e^{-i\phi}\rho)^{-\lambda-ix}=\left(1-\rho\frac{\sin(\psi-\phi)}{\sin\psi}\right)^{-2\lambda}
\sum_{k=0}^\infty P_k^{(\lambda)}(x;\psi)\tilde \rho^k ,
\end{eqnarray}
where
\begin{eqnarray*}
\tilde\rho= \frac{\rho\sin\phi}{\sin\psi-\rho\sin(\psi-\phi)} .
\end{eqnarray*}
\end{thm}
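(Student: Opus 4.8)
The plan is to follow the series-rearrangement template used in the preceding theorems, now feeding the one-parameter connection relation \eqref{MP:connect} of Lemma \ref{MP:l1} into the generating function \eqref{MP:1GF}. First I would substitute \eqref{MP:connect} for $P_n^{(\lambda)}(x;\phi)$ in \eqref{MP:1GF}, producing the double sum
$$(1-e^{i\phi}\rho)^{-\lambda+ix}(1-e^{-i\phi}\rho)^{-\lambda-ix}=\sum_{n=0}^\infty c_n\sum_{k=0}^n a_{n,k}P_k^{(\lambda)}(x;\psi),$$
where $c_n=\rho^n$ and $a_{n,k}=\sin^{-n}\psi\,\frac{(2\lambda+k)_{n-k}}{(n-k)!}\sin^k\phi\,\sin^{n-k}(\psi-\phi)$. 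The goal is then to justify reversing the order of summation and to evaluate the resulting inner series in closed form.

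For the interchange I would bound $|P_k^{(\lambda)}(x;\psi)|\le(1+k)^\sigma$ by Lemma \ref{MP:l2} and $|(2\lambda+k)_{n-k}|\le(1+n)^{2\lambda}n!/k!$ by \eqref{W:bound3} (here $2\lambda>0$ is real, so $|2\lambda|=2\lambda$). Using $\sin\psi>0$, $\sin\phi>0$ and $(1+k)^\sigma\le(1+n)^\sigma$, the inner sum over $k$ collapses under the binomial theorem to
$$\sum_{k=0}^n|a_{n,k}||P_k^{(\lambda)}(x;\psi)|\le(1+n)^{2\lambda+\sigma}\left(\frac{\sin\phi+|\sin(\psi-\phi)|}{\sin\psi}\right)^n.$$
The crucial point is that, after multiplying by $|c_n|=|\rho|^n$, the geometric factor is exactly $(\,|\rho|(\sin\phi+|\sin(\psi-\phi)|)/\sin\psi\,)^n$, so the triple estimate $\sum_n|c_n|\sum_k|a_{n,k}||P_k|$ converges precisely under hypothesis \eqref{MP:cond}; this is why the theorem carries that condition rather than merely $|\rho|<1$. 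I would also remark that, since $\sin\psi\le\sin\phi+|\sin(\psi-\phi)|$, condition \eqref{MP:cond} still forces $|\rho|<1$ and so keeps \eqref{MP:1GF} itself valid.

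Having justified the reversal, I would swap the sums and set $n=k+m$ with $m\in\No$, factoring the double sum as
$$\sum_{k=0}^\infty P_k^{(\lambda)}(x;\psi)\left(\frac{\rho\sin\phi}{\sin\psi}\right)^k\sum_{m=0}^\infty\frac{(2\lambda+k)_m}{m!}\,w^m,\qquad w:=\frac{\rho\sin(\psi-\phi)}{\sin\psi}.$$
Since \eqref{MP:cond} gives $|w|<1$, the inner sum is the binomial series $(1-w)^{-(2\lambda+k)}$. The main (though still routine) obstacle is the bookkeeping of the algebraic simplification: I would split $(1-w)^{-(2\lambda+k)}=(1-w)^{-2\lambda}(1-w)^{-k}$, factor out the $k$-independent prefactor $(1-w)^{-2\lambda}=(1-\rho\sin(\psi-\phi)/\sin\psi)^{-2\lambda}$, and recognize the remaining $k$-th power as $\tilde\rho^{\,k}$ because $\frac{\rho\sin\phi}{\sin\psi}\,(1-w)^{-1}=\frac{\rho\sin\phi}{\sin\psi-\rho\sin(\psi-\phi)}=\tilde\rho$. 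This produces \eqref{MP:f1} exactly. As a cross-check of the clean final form, I note the identity $\sin(\psi-\phi)+e^{\pm i\psi}\sin\phi=\sin\psi\,e^{\pm i\phi}$, which gives $1-e^{\pm i\psi}\tilde\rho=(1-e^{\pm i\phi}\rho)/(1-\rho\sin(\psi-\phi)/\sin\psi)$; applying \eqref{MP:1GF} with $(\phi,\rho)\mapsto(\psi,\tilde\rho)$ then recovers \eqref{MP:f1} directly, which both confirms the computation and explains why this generating function simply transforms into another Meixner–Pollaczek generating function at angle $\psi$.
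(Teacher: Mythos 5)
Your proposal is correct and follows essentially the same route as the paper's proof: substitute the connection relation \eqref{MP:connect} into the generating function \eqref{MP:1GF}, justify the interchange of summation with Lemma \ref{MP:l2} and \eqref{W:bound3} under exactly the hypothesis \eqref{MP:cond}, then reverse and resum. Your explicit evaluation of the inner sum as a binomial series (the paper compresses this into ``simplifying gives the desired result'') and your closing cross-check via $1-e^{\pm i\psi}\tilde\rho=(1-e^{\pm i\phi}\rho)\bigl(1-\rho\sin(\psi-\phi)/\sin\psi\bigr)^{-1}$ correspond precisely to the paper's remark \eqref{MP:f1a} following the theorem.
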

\begin{proof}
We apply \eqref{MP:connect} to the generating function \eqref{MP:1GF}. 
This yields a double sum
\begin{eqnarray*}
(1-e^{i\phi}\rho)^{-\lambda+ix}(1-e^{-i\phi}\rho)^{-\lambda-ix}=\sum_{n=0}^\infty \rho^n\sum_{k=0}^na_{n,k}P_k^{(\lambda)}(x;\psi),
\end{eqnarray*}
where  $a_{n,k}$ are the coefficients satisfying
\begin{eqnarray}\label{MP:connect2}
P_n^{(\lambda)}(x;\phi)=\sum_{k=0}^n a_{n,k}P_k^{(\lambda)}(x;\psi)
\end{eqnarray}
given explicitly in \eqref{MP:connect}.
In order to justify reversal of summation, we show
\begin{eqnarray*}
\sum_{n=0}^\infty |\rho|^n\sum_{k=0}^n|a_{n,k}|P_k^{(\lambda)}(x;\psi)| < \infty.
\end{eqnarray*}
Using \eqref{W:bound3} we determine
\begin{eqnarray}\label{MP:bounda}
|a_{n,k}| \le \frac{(1+n)^{2\lambda}}{\sin^n\psi}\sum_{k=0}^n \left(n\atop k\right)\sin^k\phi|\sin(\psi-\phi)|^{n-k}=\frac{(1+n)^{2\lambda}}{\sin^n\psi}\left(\sin\phi+|\sin(\psi-\phi)|\right)^n .
\end{eqnarray}
Combining \eqref{MP:bounda} with \eqref{MP:bound}, we see that
\begin{eqnarray*}
\sum_{n=0}^\infty |\rho|^n \sum_{k=0}^n |a_{n,k}||P_k^{(\lambda)}(x;\psi)| \le
\sum_{n=0}^\infty (1+n)^{2\lambda+\sigma+1}\frac{|\rho|^n}{\sin^n\psi}\left(\sin\phi+|\sin(\psi-\phi)|\right)^n <\infty
\end{eqnarray*}
by assumption on $\rho$.
Reversing the summation and simplifying gives the desired result.
%\hfill\qed
\end{proof}

In view of \eqref{MP:1GF}, \eqref{MP:f1} is equivalent to the identity
\begin{eqnarray}\label{MP:f1a}
(1-e^{i\phi}\rho)^{-\lambda+ix}(1-e^{-i\phi}\rho)^{-\lambda-ix}=\left(1-\rho\frac{\sin(\psi-\phi)}{\sin\psi}\right)^{-2\lambda}(1-e^{i\psi}\tilde\rho)^{-\lambda+ix}(1-e^{-i\psi}\tilde\rho)^{-\lambda-ix}.
\end{eqnarray}
%\normalsize
Actually, \eqref{MP:f1a} can be verified by elementary but not completely trivial calculations.
Nevertheless, \eqref{MP:f1} is a useful formula. Since it can be derived independently of the connection formula \eqref{MP:connect} it may be used to give a second proof
of Lemma \ref{MP:l1}.

\begin{thm} \label{MP:t2}
Let $\lambda>0$, $\rho\in\C$ and $\psi,\phi \in (0,\pi)$. Then
\begin{eqnarray} \label{MP:f2}
e^\rho \Hyper{1}{1}{\lambda+ix}{2\lambda}{(e^{-2i\phi}-1)\rho}
=\exp{\left(\frac{\rho e^{-i\phi}\sin(\psi-\phi)}{\sin\psi}\right)} \sum_{k=0}^\infty\left(\frac{\sin^k\phi}{\sin^k\psi}\right)\frac{P_k^{(\lambda)}(x;\psi)}{(2\lambda)_ke^{ik\phi}}\rho^k.
\end{eqnarray}
\end{thm}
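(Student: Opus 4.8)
The plan is to follow the same series-rearrangement scheme used throughout this paper (for instance in the proof of Theorem~\ref{MP:t1}): start from the generating function whose left-hand side is exactly the one appearing in \eqref{MP:f2}, substitute the one-parameter connection relation \eqref{MP:connect}, and then interchange the order of the resulting double summation. The relevant generating function is \cite[(9.7.12)]{Koekoeketal}, which in the present normalization reads
\begin{equation*}
e^\rho\,\Hyper{1}{1}{\lambda+ix}{2\lambda}{(e^{-2i\phi}-1)\rho}=\sum_{n=0}^\infty \frac{e^{-in\phi}P_n^{(\lambda)}(x;\phi)}{(2\lambda)_n}\rho^n .
\end{equation*}
The crucial bookkeeping point is the factor $e^{-in\phi}$ (which comes from the rescaling $t\mapsto e^{-i\phi}\rho$ together with $-(1-e^{-2i\phi})e^{i\phi}=-2i\sin\phi$): it is this factor that will ultimately supply both the $e^{-ik\phi}$ inside the sum and the $e^{-i\phi}$ in the exponential prefactor on the right-hand side of \eqref{MP:f2}, so I would carry it carefully throughout.

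First I would substitute \eqref{MP:connect} for $P_n^{(\lambda)}(x;\phi)$, obtaining a double sum $\sum_{n=0}^\infty c_n \sum_{k=0}^n a_{n,k} P_k^{(\lambda)}(x;\psi)$ with $c_n = e^{-in\phi}\rho^n/(2\lambda)_n$ and $a_{n,k}=\sin^k\phi\,\sin^{n-k}(\psi-\phi)(2\lambda+k)_{n-k}/((n-k)!\,\sin^n\psi)$. To justify reversing the order of summation I would estimate exactly as in the proof of Theorem~\ref{MP:t1}: using \eqref{W:bound1} gives $|c_n|\le (1+n)|\rho|^n/(2\lambda\, n!)$, while \eqref{W:bound3} yields the majorant $\sum_{k=0}^n |a_{n,k}|\le (1+n)^{2\lambda}(\sin\phi+|\sin(\psi-\phi)|)^n/\sin^n\psi$ as in \eqref{MP:bounda}, and the polynomial bound \eqref{MP:bound} controls $|P_k^{(\lambda)}(x;\psi)|$ by $(1+n)^\sigma$ for $k\le n$. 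Multiplying these bounds shows that the dominating series is a constant multiple of $\sum_n (1+n)^{2\lambda+\sigma+1}|\rho|^n(\sin\phi+|\sin(\psi-\phi)|)^n/(n!\,\sin^n\psi)$; the $n!$ in the denominator makes this convergent for every $\rho\in\C$, which is exactly why \eqref{MP:f2} carries no restriction on $\rho$, in contrast to \eqref{MP:f1}.

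Having justified the interchange, I would reverse the sums and shift the outer index by setting $n=k+m$. Using $(2\lambda)_{k+m}=(2\lambda)_k(2\lambda+k)_m$, the coefficient $c_{k+m}a_{k+m,k}$ factors cleanly into a $k$-dependent part $\frac{\sin^k\phi}{\sin^k\psi}\frac{\rho^k}{(2\lambda)_k e^{ik\phi}}$ and an $m$-dependent part $\frac{1}{m!}\bigl(\tfrac{e^{-i\phi}\rho\sin(\psi-\phi)}{\sin\psi}\bigr)^m$. The inner sum over $m$ is then recognized as the Maclaurin series of $\exp\bigl(\tfrac{\rho e^{-i\phi}\sin(\psi-\phi)}{\sin\psi}\bigr)$, which pulls out of the $k$-sum and produces precisely the exponential prefactor in \eqref{MP:f2}, leaving the stated series in $P_k^{(\lambda)}(x;\psi)$.

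I expect the only genuine obstacle to be the phase bookkeeping rather than any analytic difficulty: the convergence estimate is routine and in fact simpler than in Theorem~\ref{MP:t1} because of the factorial growth of $(2\lambda)_n$. The delicate part is making sure the $e^{-in\phi}$ from the generating function splits correctly as $e^{-ik\phi}\cdot e^{-im\phi}$, and that the identification of the left-hand side of \eqref{MP:f2} with \cite[(9.7.12)]{Koekoeketal} is made consistently. Once those identities are tracked, the closed-form evaluation of the inner $m$-sum is immediate, and no special-function summation (such as a Whipple or Chu--Vandermonde reduction) is needed, so the proof terminates as soon as the exponential series is recognized.
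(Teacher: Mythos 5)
Your proposal is correct and follows essentially the same route as the paper's own proof: the same generating function \cite[(9.7.12)]{Koekoeketal}, the same substitution of the connection relation \eqref{MP:connect}, the same bounds \eqref{W:bound1}, \eqref{MP:bound}, \eqref{MP:bounda} to justify interchanging the sums (with convergence for all $\rho\in\C$ thanks to the $n!$), and the index shift $n=k+m$ with $(2\lambda)_{k+m}=(2\lambda)_k(2\lambda+k)_m$ producing the exponential prefactor, which is exactly the ``simplification'' the paper leaves implicit. The only microscopic correction: in your bound $|c_n|\le (1+n)|\rho|^n/(2\lambda\,n!)$ the constant should be $\max\{1,(2\lambda)^{-1}\}$ as in the paper, since \eqref{W:bound1} applies only for $n\ge 1$ and the case $n=0$ needs the constant to be at least $1$.
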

\begin{proof}
A generating function for the Meixner-Pollaczek polynomials is given
by \cite[(9.7.12)]{Koekoeketal}
\begin{eqnarray} \label{MP:2GF}
e^\rho\Hyper{1}{1}{\lambda+ix}{2\lambda}{(e^{-2i\phi}-1)\rho}=\sum_{n=0}^\infty
\frac{P_n^{(\lambda)}(x;\phi)}{(2\lambda)_ne^{in\phi}}\rho^n.
\end{eqnarray}
To this generating function, we apply \eqref{MP:connect}. This yields a double sum
\begin{eqnarray}\label{MP:dsum1}
e^\rho\Hyper{1}{1}{\lambda+ix}{2\lambda}{(e^{-2i\phi}-1)\rho}=\sum_{n=0}^\infty
c_n\sum_{k=0}^na_{n,k}P_k^{(\lambda)}(x;\psi),
\end{eqnarray}
where
\begin{eqnarray*}
c_n = \frac{\rho^n}{(2\lambda)_ne^{in\phi}}
\end{eqnarray*}
and $a_{n,k}$ are the coefficients satisfying \eqref{MP:connect2}.

We bound $|c_n|$ using \eqref{W:bound1}
\begin{eqnarray} \label{MP:bound1}
|c_n|  \le K_1\frac{(1+n)|\rho|^n}{n!},
\end{eqnarray}
where $K_1= \max\{1,(2\lambda)^{-1}\}$.
We use \eqref{MP:bound}, \eqref{MP:bounda} and \eqref{MP:bound1} to show
\begin{eqnarray*}
\sum_{n=0}^\infty |c_n| \sum_{k=0}^n |a_{n,k}||P_k^{(\lambda)}(x;\psi)| \le
K_1\sum_{n=0}^\infty \frac{(1+n)^{2\lambda+\sigma+1}}{n!} \left(\frac{2|\rho|}{\sin\psi}\right)^n<\infty
\end{eqnarray*}
which justifies reversal of the sum in \eqref{MP:dsum1}.
Reversing the summation and simplifying gives the desired result.
%\hfill\qed
\end{proof}

\begin{thm} \label{MP:t3}
Let $\lambda > 0$, $\gamma, \rho\in\C$ and $\psi, \phi \in (0,\pi)$ be such that \eqref{MP:cond} holds. Then
\begin{eqnarray}
(1-\rho)^{-\gamma}
\Hyper{2}{1}{\gamma,\lambda+ix}{2\lambda}{\frac{(1-e^{-2i\phi})\rho}{\rho-1}}
&& = \left(1-\frac{\rho\sin(\psi-\phi)}{e^{i\phi}\sin\psi}\right)^{-\gamma} \label{MP:f3}\\
&& \times \sum_{k=0}^\infty\left(\frac{\sin^k\phi}{\sin^k\psi}\right)\left(1-\frac{\rho\sin(\psi-\phi)}{e^{i\phi}\sin\psi}\right)^{-k}\frac{(\gamma)_kP_k^{(\lambda)}(x;\psi)}{(2\lambda)_ke^{ik\phi}}\rho^k
\nonumber
\end{eqnarray}
\end{thm}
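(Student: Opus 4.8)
The plan is to follow the templates of Theorems~\ref{MP:t1} and~\ref{MP:t2} essentially verbatim, now starting from the third Meixner--Pollaczek generating function \cite[(9.7.13)]{Koekoeketal},
\begin{equation*}
(1-\rho)^{-\gamma}\,\Hyper{2}{1}{\gamma,\lambda+ix}{2\lambda}{\frac{(1-e^{-2i\phi})\rho}{\rho-1}}=\sum_{n=0}^\infty \frac{(\gamma)_n P_n^{(\lambda)}(x;\phi)}{(2\lambda)_n e^{in\phi}}\rho^n .
\end{equation*}
First I would substitute the one-parameter connection relation \eqref{MP:connect} for $P_n^{(\lambda)}(x;\phi)$, obtaining the double sum $\sum_{n=0}^\infty c_n\sum_{k=0}^n a_{n,k}P_k^{(\lambda)}(x;\psi)$ with $c_n=(\gamma)_n\rho^n/((2\lambda)_n e^{in\phi})$ and $a_{n,k}$ the coefficients displayed in \eqref{MP:connect}.

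The next step is to justify reversing the order of summation. Using \eqref{W:bound1} and \eqref{W:bound2} I would bound $|c_n|\le K_1(1+n)^{|\gamma|+1}|\rho|^n$, and then reuse the already-established estimates \eqref{MP:bounda} and \eqref{MP:bound}, which together yield $\sum_{k=0}^n|a_{n,k}||P_k^{(\lambda)}(x;\psi)|\le (1+n)^{\sigma+2\lambda}\big((\sin\phi+|\sin(\psi-\phi)|)/\sin\psi\big)^n$. Multiplying these, the double series is dominated by $\sum_{n=0}^\infty(1+n)^{|\gamma|+\sigma+2\lambda+1}\big(|\rho|(\sin\phi+|\sin(\psi-\phi)|)/\sin\psi\big)^n$, which converges precisely when \eqref{MP:cond} holds. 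I would also record that \eqref{MP:cond} forces $|\rho|<1$, since $\sin\psi\le\sin\phi+|\sin(\psi-\phi)|$, so that the left-hand side and the generating function are well defined on the stated region.

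After interchanging the sums I would write the right-hand side as $\sum_{k=0}^\infty P_k^{(\lambda)}(x;\psi)\sum_{m=0}^\infty c_{m+k}a_{m+k,k}$ and evaluate the inner sum. The computation hinges on the factorizations $(2\lambda)_{m+k}=(2\lambda)_k(2\lambda+k)_m$ and $(\gamma)_{m+k}=(\gamma)_k(\gamma+k)_m$: the first cancels the Pochhammer symbol $(2\lambda+k)_m$ produced by $a_{m+k,k}$, leaving a clean $k$-dependent prefactor $\frac{\sin^k\phi}{\sin^k\psi}\frac{(\gamma)_k\rho^k}{(2\lambda)_k e^{ik\phi}}$ times $\sum_{m=0}^\infty\frac{(\gamma+k)_m}{m!}\big(\frac{\rho\sin(\psi-\phi)}{e^{i\phi}\sin\psi}\big)^m$. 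This binomial series sums to $\big(1-\frac{\rho\sin(\psi-\phi)}{e^{i\phi}\sin\psi}\big)^{-\gamma-k}$, convergent because $|\frac{\rho\sin(\psi-\phi)}{e^{i\phi}\sin\psi}|<1$ under \eqref{MP:cond}; splitting the exponent $-\gamma-k$ into its $-\gamma$ and $-k$ parts and recollecting reproduces exactly \eqref{MP:f3}.

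I expect the only genuine obstacle to be the convergence bookkeeping, namely confirming that the dominating geometric ratio is exactly $|\rho|(\sin\phi+|\sin(\psi-\phi)|)/\sin\psi$, so that \eqref{MP:cond} is simultaneously what makes the reversal legitimate and what guarantees $|\frac{\rho\sin(\psi-\phi)}{e^{i\phi}\sin\psi}|<1$ for the binomial step. The algebraic heart---Pochhammer splitting followed by the binomial series---is routine. As a sanity check, one could look for an independent verification in the spirit of the remark following Theorem~\ref{MP:t1}, by recognising the right-hand sum as a reparametrized instance of \cite[(9.7.13)]{Koekoeketal} at angle $\psi$ and then matching the two resulting $\Hyper{2}{1}{}{}{}$ factors.
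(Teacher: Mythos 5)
Your proposal is correct and matches the paper's own proof essentially step for step: same generating function \cite[(9.7.13)]{Koekoeketal}, same substitution of \eqref{MP:connect}, same bounds \eqref{W:bound1}, \eqref{W:bound2}, \eqref{MP:bounda}, \eqref{MP:bound} yielding the dominating series with ratio $|\rho|(\sin\phi+|\sin(\psi-\phi)|)/\sin\psi$ under \eqref{MP:cond}. The only difference is that you spell out the final ``simplification'' (Pochhammer splitting $(\gamma)_{m+k}=(\gamma)_k(\gamma+k)_m$, $(2\lambda)_{m+k}=(2\lambda)_k(2\lambda+k)_m$ and the binomial series), which the paper leaves implicit---a welcome addition, and your algebra there is correct.
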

\begin{proof}
A generating function for the Meixner-Pollaczek polynomials is given
by \cite[(9.7.13)]{Koekoeketal}
\begin{eqnarray} \label{MP:3GF}
(1-\rho)^{-\gamma}\Hyper{2}{1}{\gamma,\lambda+ix}{2\lambda}{\frac{(1-e^{-2i\phi})
\rho}{\rho-1}}=\sum_{n=0}^\infty \frac{(\gamma)_n}{(2\lambda)_n}\frac{P_n^{(\lambda)}
(x;\phi)}{e^{in\phi}}\rho^n.
\end{eqnarray}
We substitute \eqref{MP:connect} for $P_n^{(\lambda)}(x;\phi)$ in this generating
function. This yields the double sum
\begin{eqnarray*}
(1-\rho)^{-\gamma}\Hyper{2}{1}{\gamma,\lambda+ix}{2\lambda}{\frac{(1-e^{-2i\phi})
\rho}{\rho-1}}=\sum_{n=0}^\infty c_n \sum_{k=0}^n a_{n,k}P_n^{(\lambda)}(x;\psi),
\end{eqnarray*}
where
\begin{eqnarray*}
c_n = \frac{(\gamma)_n\rho^n}{(2\lambda)_ne^{in\phi}},
\end{eqnarray*}
and $a_{n,k}$ are the coefficients satisfying \eqref{MP:connect2}.
In order to reverse the order of summation, we show that
\begin{eqnarray*}
\sum_{n=0}^\infty |c_n| \sum_{k=0}^n |a_{n,k}||P_k^{(\lambda)}(x;\phi)| < \infty.
\end{eqnarray*}
To bound $|c_n|$, we apply \eqref{W:bound1} and \eqref{W:bound2}
\begin{eqnarray} \label{MP:bound3}
|c_n| \le K_1(1+n)^{\sigma_1}|\rho|^n,
\end{eqnarray}
where $\sigma_1=|\gamma|+1$ and $K_1 = \max\{1,(2\lambda)^{-1}\}$.
Combining \eqref{MP:bounda} with \eqref{MP:bound} and \eqref{MP:bound3},
we see that
\begin{eqnarray*}
\sum_{n=0}^\infty |c_n| \sum_{k=0}^n |a_{n,k}||P_k^{(\lambda)}(x;\psi)|\le
K_1\sum_{n=0}^\infty (1+n)^{\sigma_1+\sigma+2\lambda} \left(\frac{|\rho|(\sin\phi+|\sin(\psi-\phi)|)}{\sin\psi}\right)^n<\infty.
\end{eqnarray*}
Reversing the summation and simplifying gives the desired result.
%\hfill\qed
\end{proof}

%------------------------------------------------%
% Integrals                                      %
%------------------------------------------------%

\section{Definite integrals}
We may apply the orthogonality relation on these continuous hypergeometric orthogonal
polynomials to the generalized generating functions considered above to calculate their
corresponding definite integrals.
The Wilson polynomials satisfy the orthogonality relation given in 
\cite[(9.1.2)]{Koekoeketal}
%\begin{eqnarray}
%\int_{0}^\infty W_m(x^2;a,b,c,d)W_n(x^2;a,b,c,d)w(x)dx && = 2\pi\frac{n!\Gamma(n+a+b)
%\Gamma(n+a+c)\Gamma(n+a+d)}{(2n+a+b+c+d-1)} \nonumber \\
%&& \times \frac{\Gamma(n+b+c)\Gamma(n+b+d)\Gamma(n+c+d)}{\Gamma(n+a+b+c+d-1)}
%\delta_{m,n} \label{W:orth}
%\end{eqnarray}
with weight $w:(0,\infty)\to\R$ 
%is the weight function 
defined by
\begin{eqnarray} 
\label{W:weight}
w(x):=\left|\frac{\Gamma(a+ix)\Gamma(b+ix)\Gamma(c+ix)\Gamma(d+ix)}{\Gamma(2ix)}\right|^2.
\end{eqnarray}

%--------------------------------------
% Wilson Integrals
%--------------------------------------
\begin{cor}\label{I:W1}
Let $\rho\in\C$, $|\rho|<1$, and $a,b,c,d,h$ complex parameters with positive real parts, non-real parameters occurring in conjugate pairs among $a,b,c,d$ and $a,b,c,h$.
Then
\begin{eqnarray} \nonumber
&& \int_{0}^\infty \,
\Hyper{2}{1}{a+ix,c+ix}{a+c}{\rho}\,\Hyper{2}{1}{b-ix,d-ix}{b+d}{\rho} \\
&& \hspace{15mm} \times W_k(x^2;a,b,c,h)
\left|
\frac{\Gamma(a+ix)\Gamma(b+ix)\Gamma(c+ix)\Gamma(h+ix)}{\Gamma(2ix)}\right|^2
dx
\nonumber \\
&& \hspace{15mm} =  2\pi \frac{\Gamma(a+c)\Gamma(k+a+b)\Gamma(k+a+h)\Gamma(k+b+c)\Gamma(k+c+h)\Gamma(k+b+h)}{(b+d)_k\Gamma(2k+a+b+c+h)\{(k+a+b+c+d-1)_k\}^{-1}} \nonumber \\
\label{I:CMVEx1Corr}
&& \hspace{15mm}
\times \,\Hyper{4}{3}{d-h,2k+a+b+c+d-1,k+a+b,k+b+c}{k+a+b+c+d-1,2k+a+b+c+h,k+b+d}{\rho} \rho^k.
\end{eqnarray}
\end{cor}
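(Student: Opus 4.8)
The plan is to combine the generalized generating function \eqref{W:f1} of Theorem~\ref{W:t1} with the Wilson orthogonality relation \cite[(9.1.2)]{Koekoeketal}, following the scheme announced at the start of this section. First I would relabel the summation index in \eqref{W:f1} from $k$ to $m$, writing its right-hand side as $\sum_{m=0}^\infty C_m(\rho)\,W_m(x^2;a,b,c,h)$ with
\[
C_m(\rho)=\frac{(m+a+b+c+d-1)_m}{(m+a+b+c+h-1)_m(a+c)_m(b+d)_m\,m!}\,\Hyper{4}{3}{d-h,2m+a+b+c+d-1,m+a+b,m+b+c}{m+a+b+c+d-1,2m+a+b+c+h,m+b+d}{\rho}\,\rho^m .
\]
Then I would multiply both sides by $W_k(x^2;a,b,c,h)$ times the Wilson weight \eqref{W:weight} with $d$ replaced by $h$, which I denote $w_h(x)$, and integrate over $x\in(0,\infty)$.

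Assuming the interchange of the integral and the sum over $m$ is permissible (see below), the orthogonality relation for the Wilson polynomials in the parameters $a,b,c,h$,
\[
\frac{1}{2\pi}\int_0^\infty W_m(x^2;a,b,c,h)\,W_k(x^2;a,b,c,h)\left|\frac{\Gamma(a+ix)\Gamma(b+ix)\Gamma(c+ix)\Gamma(h+ix)}{\Gamma(2ix)}\right|^2 dx = h_k\,\delta_{m,k},
\]
with
\[
h_k=\frac{k!\,(k+a+b+c+h-1)_k\,\Gamma(k+a+b)\Gamma(k+a+c)\Gamma(k+a+h)\Gamma(k+b+c)\Gamma(k+b+h)\Gamma(k+c+h)}{\Gamma(2k+a+b+c+h)},
\]
leaves only the term $m=k$, so that the integral equals $2\pi h_k\,C_k(\rho)$. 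The remaining work is routine simplification of $\Gamma$-functions and Pochhammer symbols: the factors $k!$ and $(k+a+b+c+h-1)_k$ in $h_k$ cancel the same factors in the denominator of $C_k(\rho)$, while the identity $\Gamma(a+c)(a+c)_k=\Gamma(k+a+c)$ merges the $(a+c)_k^{-1}$ of $C_k(\rho)$ with the $\Gamma(k+a+c)$ of $h_k$ to produce the isolated prefactor $\Gamma(a+c)$; what survives is exactly the right-hand side of \eqref{I:CMVEx1Corr}.

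The one genuine point requiring care is the term-by-term integration. I would justify it by Fubini's theorem, that is, by checking that $\sum_{m=0}^\infty |C_m(\rho)|\int_0^\infty |W_m(x^2;a,b,c,h)\,W_k(x^2;a,b,c,h)|\,w_h(x)\,dx<\infty$. Cauchy--Schwarz in $L^2((0,\infty),w_h\,dx)$ bounds each integral by $2\pi\,h_m^{1/2}h_k^{1/2}$, so it suffices to show that $|C_m(\rho)|\,h_m^{1/2}$ is summable. This is where the earlier estimates \eqref{W:bound1}--\eqref{W:bound4} do their work: a Stirling-type count shows that $h_m^{1/2}$ grows like $(\Gamma(m))^3$ up to polynomial factors, while the three Pochhammer symbols and the factorial in the denominator of $C_m(\rho)$ make $|C_m(\rho)|$ decay like $(\Gamma(m))^{-3}|\rho|^m$ up to polynomial factors. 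The factorial growth therefore cancels exactly, leaving a series dominated by $(1+m)^{\mathrm{const}}\,|\rho|^m$, which converges since $|\rho|<1$. Thus the interchange is legitimate for $|\rho|<1$, and this convergence estimate is the main (and essentially only) obstacle in the argument.
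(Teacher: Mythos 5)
Your proposal is correct, and its skeleton coincides with the paper's: multiply \eqref{W:f1} by $W_k(x^2;a,b,c,h)$ times the weight \eqref{W:weight} with $d$ replaced by $h$, integrate over $(0,\infty)$, let orthogonality kill every term but $m=k$, and simplify; your normalization $2\pi h_k$ and the cancellation $\Gamma(a+c)(a+c)_k=\Gamma(k+a+c)$ reproduce \eqref{I:CMVEx1Corr} exactly. Where you genuinely diverge from the paper is in the justification of the term-by-term integration, which you correctly identify as the only real issue. The paper never estimates the explicit post-rearrangement coefficients $C_m(\rho)$ at all: it shows the series converges in the $L^2$ sense by proving $\sum_k d_k^2 s_k^2<\infty$, where $d_k=\sum_{n\ge k}c_n a_{n,k}$ is the coefficient in its \emph{pre}-rearrangement form and $s_k$ is the $L^2(w)$ norm of $W_k$; the key trick is that $s_k\le K(1+k)^\sigma (k!)^3$ is a bound of the same type as the pointwise bound on $|W_k|$, so the double-sum absolute-convergence argument from the proof of Theorem \ref{W:t1} can be recycled verbatim with $|W_k|$ replaced by $s_k$, yielding $\sum_k |d_k|s_k<\infty$ and hence the $L^2$ statement. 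Your route instead is Fubini--Tonelli plus Cauchy--Schwarz, with direct Stirling-type estimates on the closed-form $C_m(\rho)$ and on $h_m$. Your growth counts are right: $h_m^{1/2}$ is $(\Gamma(m))^3$ up to polynomial factors, and the net factorial content of $C_m(\rho)$ is $(\Gamma(m))^{-3}|\rho|^m$ (note that, strictly, the denominator factor $(m+a+b+c+h-1)_m$ grows like $4^m m!$ and is needed to cancel the numerator factor $(m+a+b+c+d-1)_m$ of the same size; the surviving decay comes from $(a+c)_m(b+d)_m\,m!$). The one step you gloss over is that your estimate of $C_m(\rho)$ silently requires the $\Hyper{4}{3}{d-h,\,2m+a+b+c+d-1,\,m+a+b,\,m+b+c}{m+a+b+c+d-1,\,2m+a+b+c+h,\,m+b+d}{\rho}$ factor to be bounded uniformly in $m$ for fixed $|\rho|<1$; since its parameters depend on $m$, this is not automatic, though it does follow from \eqref{W:bound4}-type ratio bounds, which show its $j$-th Maclaurin coefficient is at most a constant (independent of $m$) times $(1+j)^{\sigma}|(d-h)_j|/j!$. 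The paper's recycling device is designed precisely to avoid ever bounding this ${}_4F_3$; your approach, once that uniform bound is made explicit, is more self-contained in that it never re-enters the double-sum machinery of Theorem \ref{W:t1}.
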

\begin{proof}
Choose some $k\in\No$. We begin by multiplying both sides of \eqref{W:f1}
by $W_{k}(x^2;a,b,c,h)w(x)$, where $w(x)$ is the weight function as defined in
\eqref{W:weight} with $d$ replaced by $h$. Integrating 
over $x \in (0,\infty)$ causes the infinite sum on the right hand side 
to vanish except for a single term. Simplification produces the
desired result.

In order to justify the interchange of sum and integral we show that the series on the right-hand side of \eqref{W:f1}
converges in the $L^2$-sense with respect to the weight $w(x)$.
This requires 
\begin{eqnarray}\label{I:eq1}
\sum_{k=0}^\infty d_k^2s_k^2 <\infty ,
\end{eqnarray}
where $s_k>0$ is defined by
\begin{eqnarray*}
 s_k^2=\int_0^\infty w(x)\left\{W_k(x)\right\}^2\,dx ,
\end{eqnarray*}
and 
\begin{eqnarray*}
 d_k=\sum_{n=k}^\infty c_na_{n,k} ,
\end{eqnarray*}
where $c_n$ and $a_{n,k}$ are defined as in the proof of Theorem \ref{W:t1}.
%From \eqref{W:orth} we obtain
From the orthogonality relation corresponding to (\ref{W:weight}) we obtain
\begin{eqnarray*}
 s_k\le K(1+k)^\sigma k!^3 ,
\end{eqnarray*} 
where $K,\sigma$ are positive constants independent of $k$.
Since the estimate of $s_k$ is of the same type as that of $W_k$, by the argument used in the proof of Theorem 1,
we see that, for $|\rho|<1$,
\begin{eqnarray*}
\sum_{n=0}^\infty |c_n|\sum_{k=0}^n |a_{n,k}|s_k <\infty .
\end{eqnarray*} 
This implies
\begin{eqnarray*}
 \sum_{k=0}^\infty |d_k| s_k <\infty
\end{eqnarray*} 
which in turn implies \eqref{I:eq1} and the proof is complete.
%\hfill\qed
\end{proof}

\begin{cor}\label{I:W2}
Let $\rho\in\C$, $|\rho|<1$, and $a,b,c,d,h$ complex parameters with positive real parts, non-real parameters occurring in conjugate pairs among $a,b,c,d$ and $a,b,c,h$.
Then
\begin{eqnarray} \nonumber
&& \int_{0}^\infty
(1-\rho)^{1-a-b-c-d}\,
\Hyper{4}{3}{\frac{1}{2}(a+b+c+d-1),\frac{1}{2}(a+b+c+d),a+ix,a-ix}{a+b,a+c,a+d}{-\frac{4\rho}{(1-\rho)^2}} \\
&& \hspace{15mm} \times W_k(x^2;a,b,c,h)w(x) dx \nonumber \\
&& \hspace{15mm} = \frac{2\pi\Gamma(a+b)\Gamma(a+c)\Gamma(k+a+h)\Gamma(k+b+c)
\Gamma(k+b+h)\Gamma(k+c+h)}{\Gamma(2k+a+b+c+h)(a+d)_k\{(k+a+b+c+d-1)_k(a+b+c+d-1
)_k\}^{-1}} \nonumber \\
&& \hspace{15mm} \times\,
\Hyper{3}{2}{2k+a+b+c+d-1,d-h,k+b+c}{2k+a+b+c+h,a+d+k}{\rho}\rho^k.
\label{I:WilsonInt3}
\end{eqnarray}
\end{cor}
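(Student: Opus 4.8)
The plan is to follow the proof of Corollary~\ref{I:W1} verbatim in structure, replacing its input identity \eqref{W:f1} by the generating-function expansion \eqref{W:f2} of Theorem~\ref{W:t2}. Fix $k\in\No$ and multiply both sides of \eqref{W:f2} by $W_k(x^2;a,b,c,h)\,w(x)$, where $w$ is the weight \eqref{W:weight} with $d$ replaced by $h$ (so that $w$ is the orthogonality weight for the parameter set $a,b,c,h$). Integrating over $x\in(0,\infty)$ and applying the Wilson orthogonality relation \cite[(9.1.2)]{Koekoeketal} for the parameters $a,b,c,h$ annihilates every term of the right-hand sum except the one whose summation index equals $k$.

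The remaining computation is bookkeeping. First I would record the squared norm furnished by the orthogonality relation,
\begin{eqnarray*}
\int_0^\infty w(x)\,\{W_k(x^2;a,b,c,h)\}^2\,dx
&=& 2\pi\,\frac{\Gamma(k+a+b)\Gamma(k+a+c)\Gamma(k+a+h)\Gamma(k+b+c)\Gamma(k+b+h)\Gamma(k+c+h)}{\Gamma(2k+a+b+c+h)} \\
&& \times\,(k+a+b+c+h-1)_k\,k! ,
\end{eqnarray*}
and multiply it by the coefficient of $W_k(x^2;a,b,c,h)$ in \eqref{W:f2}. The factors $(k+a+b+c+h-1)_k$ and $k!$ cancel against the matching factors in the denominator of that coefficient, and the identities $\Gamma(k+a+b)/(a+b)_k=\Gamma(a+b)$ and $\Gamma(k+a+c)/(a+c)_k=\Gamma(a+c)$ absorb two more gamma functions. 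What remains is precisely the gamma-product displayed on the right of \eqref{I:WilsonInt3}, with the leftover $(a+d)_k$ in the denominator and the numerator Pochhammers $(k+a+b+c+d-1)_k(a+b+c+d-1)_k$ carried through intact, while the ${}_3F_2$ and $\rho^k$ ride along unchanged.

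The only real point requiring care, exactly as in Corollary~\ref{I:W1}, is the interchange of summation and integration, which I would justify by showing that the series in \eqref{W:f2} converges in $L^2$ with respect to $w$. With $s_k^2=\int_0^\infty w(x)\{W_k(x^2;a,b,c,h)\}^2\,dx$ and $d_k=\sum_{n=k}^\infty c_na_{n,k}$ (using $c_n,a_{n,k}$ from the proof of Theorem~\ref{W:t2}), the goal is $\sum_{k=0}^\infty d_k^2s_k^2<\infty$. The crucial observation is that the explicit norm above gives $s_k\le K(1+k)^\sigma(k!)^3$, of the same growth type as the bound \eqref{W:boundaW} on $W_k$; hence the absolute-convergence estimate already established in the proof of Theorem~\ref{W:t2} (for $|\rho|<1$) applies with $s_k$ in place of $|W_k(x^2;a,b,c,h)|$, yielding $\sum_n|c_n|\sum_k|a_{n,k}|s_k<\infty$. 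This forces $\sum_k|d_k|s_k<\infty$ and therefore $\sum_k d_k^2s_k^2<\infty$. I expect this $L^2$ step to be the main obstacle, but since it is structurally identical to the one in Corollary~\ref{I:W1} and uses only growth bounds already in hand, it should present no new difficulty.
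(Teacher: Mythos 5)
Your proposal is correct and follows exactly the route the paper takes: the paper's proof of this corollary simply says to repeat the argument of Corollary \ref{I:W1} with \eqref{W:f2} in place of \eqref{W:f1}, which is precisely what you do, including the $L^2$-convergence justification for interchanging sum and integral. Your explicit bookkeeping of the Wilson norm and the cancellations is more detailed than the paper's one-line proof, but the substance is the same.
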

\begin{proof}
The proof is the similar to the proof of Corollary \ref{I:W1}, except apply to
both sides of \eqref{W:f2}.
%\hfill\qed
\end{proof}

%--------------------------------------
% Continuous Dual Hahn Integrals
%--------------------------------------

The continuous dual Hahn polynomials satisfy the orthogonality relation
\cite[(9.3.2)]{Koekoeketal}.
%\begin{eqnarray*}
%&& \int_{0}^\infty\left|\frac{\Gamma(a+ix)\Gamma(b+ix)\Gamma(c+ix)}{\Gamma(2ix)}
%\right|^2S_m(x^2;a,b,c)S_n(x^2;a,b,c)dx \nonumber \\
%&& \hspace{8mm}= 2\pi\Gamma(n+a+b)\Gamma(n+a+c)\Gamma(n+b+c)n!\delta_{m,n}. \label{CDH:orth}
%\end{eqnarray*}

\begin{cor} \label{I:CDH1}
Let $\rho\in\C$, $|\rho|<1$, and $a,b,c,d,f > 0$, except for possibly a pair of complex conjugates with positive
real parts. Then
\begin{eqnarray} \nonumber
\int_0^\infty (1-\rho)^{-d+ix}\Hyper{2}{1}{a+ix,b+ix}{a+b}{\rho}\left|\frac{
\Gamma(a+ix)\Gamma(d+ix)\Gamma(f+ix)}{\Gamma(2ix)}\right|^2S_k(x^2;a,d,f)dx \\
=
2\pi\frac{\Gamma(k+a+d)\Gamma(k+a+f)\Gamma(k+d+f)}{(a+b)_kk!}
\Hyper{2}{1}{b-f, k+a+d}{k+a+b}{\rho}.
\end{eqnarray}
\end{cor}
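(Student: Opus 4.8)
The plan is to derive Corollary \ref{I:CDH1} directly from Theorem \ref{CDH:t1} by exploiting the orthogonality relation for the continuous dual Hahn polynomials, following the template already established in the proofs of Corollaries \ref{I:W1} and \ref{I:W2}. First I would fix $k\in\No$ and multiply both sides of the generating-function expansion \eqref{CDH:f1} by the weight
\[
w(x)=\left|\frac{\Gamma(a+ix)\Gamma(d+ix)\Gamma(f+ix)}{\Gamma(2ix)}\right|^2
\]
together with the polynomial $S_k(x^2;a,d,f)$, where the weight uses the parameter triple $(a,d,f)$ corresponding to the continuous dual Hahn polynomials appearing on the right-hand side of \eqref{CDH:f1}. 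Integrating over $x\in(0,\infty)$ and invoking the orthogonality relation \cite[(9.3.2)]{Koekoeketal}, every term in the infinite sum vanishes except the one with summation index equal to $k$, since $\int_0^\infty w(x)S_j(x^2;a,d,f)S_k(x^2;a,d,f)\,dx$ is a nonzero multiple of the Kronecker delta $\delta_{j,k}$.

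The surviving term on the right-hand side of \eqref{CDH:f1} is
\[
\frac{\rho^k}{(a+b)_k k!}\,\Hyper{2}{1}{b-f,k+a+d}{k+a+b}{\rho}\int_0^\infty w(x)\left\{S_k(x^2;a,d,f)\right\}^2 dx.
\]
I would then substitute the explicit value of the squared norm $\int_0^\infty w(x)\{S_k(x^2;a,d,f)\}^2\,dx$ from the orthogonality relation \cite[(9.3.2)]{Koekoeketal}, which has the form $2\pi\,\Gamma(k+a+d)\Gamma(k+a+f)\Gamma(k+d+f)\,k!$ for the continuous dual Hahn polynomials with parameters $(a,d,f)$. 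The factor of $k!$ from the norm cancels against one in the denominator, and assembling the remaining Gamma factors with the coefficient already present yields exactly the claimed right-hand side. The left-hand side is simply the original integrand $(1-\rho)^{-d+ix}\,{}_2F_1$ multiplied by $w(x)S_k(x^2;a,d,f)$, matching the statement.

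The main obstacle, as in the Wilson cases, is justifying the interchange of summation and integration. I would handle this by showing that the series on the right-hand side of \eqref{CDH:f1} converges in the $L^2$-sense with respect to $w(x)$, i.e.\ that $\sum_{k=0}^\infty d_k^2 s_k^2<\infty$, where $s_k^2=\int_0^\infty w(x)\{S_k(x^2;a,d,f)\}^2\,dx$ and $d_k$ is the coefficient of $S_k(x^2;a,d,f)$ in the expansion. Since the orthogonality relation gives a bound on $s_k$ of the same polynomial-times-factorial type as the bound \eqref{CDH:bound} already derived for $|S_k(x^2;a,d,f)|$, the absolute-convergence estimate worked out in the proof of Theorem \ref{CDH:t1} applies verbatim with $s_k$ in place of the polynomial bound. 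This shows $\sum_{n=0}^\infty|c_n|\sum_{k=0}^n|a_{n,k}||b_{n,k}|\,s_k<\infty$ for $|\rho|<\tfrac14$, hence $\sum_{k=0}^\infty|d_k|s_k<\infty$, which forces the finiteness of $\sum_k d_k^2 s_k^2$ and legitimizes term-by-term integration; the resulting identity then extends to all $|\rho|<1$ by the analytic-continuation argument already used to establish \eqref{CDH:f1} on the full unit disk.
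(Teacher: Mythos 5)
Your proposal is correct and takes essentially the same approach as the paper: multiply \eqref{CDH:f1} by $S_k(x^2;a,d,f)$ times the weight, integrate over $(0,\infty)$, invoke the orthogonality relation to isolate the $k$-th term, and justify the sum--integral interchange by the $L^2$ argument of Corollary \ref{I:W1}; your explicit treatment of the $|\rho|<\frac14$ restriction followed by analytic continuation supplies precisely the detail the paper leaves to the reader. One caveat: faithfully executing your computation gives $2\pi\,\Gamma(k+a+d)\Gamma(k+a+f)\Gamma(k+d+f)\,\rho^k\,{}_2F_1(b-f,k+a+d;k+a+b;\rho)\,/\,(a+b)_k$ --- the $k!$ from the norm cancels and a factor $\rho^k$ survives --- so the right-hand side as printed in the corollary (retaining $1/k!$ and omitting $\rho^k$) contains a typographical error that your assertion of exact agreement with the ``claimed right-hand side'' glosses over.
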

\begin{proof}
Multiplying both sides of \eqref{CDH:f1} by $S_k(x^2;a,d,f)\left|\frac{\Gamma(a+ix)
\Gamma(d+ix)\Gamma(f+ix)}{\Gamma(2ix)}\right|^2$ and integrating across $x \in
(0, \infty)$ causes the series to vanish except for a single term. 
Justification for interchanging the sum and the integral is similar to the proof of 
Corollary \ref{I:W1}.  We leave this exercise to the reader.
Simplifying completes the proof.
%\hfill\qed
\end{proof}

\begin{cor} \label{I:CDH2}
Let $\rho\in\C$, and $a,b,c,d > 0$ except for
possibly a pair of complex conjugates with positive real parts among $a,b,c$ and $a,b,d$.
Then
\begin{eqnarray} \nonumber
\int_0^\infty e^\rho\Hyper{2}{2}{a+ix,a-ix}{a+b,a+c}{-\rho}\left|\frac{\Gamma(a+ix)
\Gamma(b+ix)\Gamma(d+ix)}{\Gamma(2ix)}\right|^2S_k(x^2;a,b,d)dx \\
= \frac{\Gamma(a+b)\Gamma(k+a+d)\Gamma(k+b+d)}{(a+c)_k}\Hyper{1}{1}{c-d}{k+a+c}
{\rho}\rho^k.
\end{eqnarray}
\end{cor}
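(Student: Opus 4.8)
The plan is to project the generating-function expansion \eqref{CDH:f2} of Theorem~\ref{CDH:t2} onto a single continuous dual Hahn polynomial, exactly as was done for the Wilson polynomials in Corollary~\ref{I:W1} and for the continuous dual Hahn polynomials in Corollary~\ref{I:CDH1}. Fixing $k\in\No$, I would multiply both sides of \eqref{CDH:f2} by
\begin{equation*}
S_k(x^2;a,b,d)\left|\frac{\Gamma(a+ix)\Gamma(b+ix)\Gamma(d+ix)}{\Gamma(2ix)}\right|^2
\end{equation*}
and integrate over $x\in(0,\infty)$. The weight here is precisely the continuous dual Hahn weight with parameters $a,b,d$, so the orthogonality relation \cite[(9.3.2)]{Koekoeketal} applies: granting the interchange of summation and integration, every term of the series on the right-hand side of \eqref{CDH:f2} is annihilated except the one whose summation index equals $k$.

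What survives is the $k$-th coefficient
\begin{equation*}
\frac{\rho^k}{(a+b)_k(a+c)_k\,k!}\,\Hyper{1}{1}{c-d}{k+a+c}{\rho}
\end{equation*}
multiplied by the squared norm $\int_0^\infty \{S_k(x^2;a,b,d)\}^2\,w_{abd}(x)\,dx$, where $w_{abd}$ denotes the weight displayed above. Substituting the value of this norm from \cite[(9.3.2)]{Koekoeketal}, which is proportional to $\Gamma(k+a+b)\Gamma(k+a+d)\Gamma(k+b+d)\,k!$, and then using $(a+b)_k=\Gamma(k+a+b)/\Gamma(a+b)$ to cancel the $\Gamma(k+a+b)$ and the $k!$ factors, I would arrive at the closed form on the right-hand side of the corollary. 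Since Theorem~\ref{CDH:t2} holds for every $\rho\in\C$, no analytic continuation step is required and the resulting identity is likewise valid for all $\rho\in\C$.

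The only substantive point is the justification of the term-by-term integration, i.e.\ that the series on the right of \eqref{CDH:f2} converges in the $L^2$ sense with respect to $w_{abd}$; the argument is parallel to the one sketched in Corollary~\ref{I:W1}. Writing the $k$-th expansion coefficient as $d_k=\sum_{n=k}^\infty c_na_{n,k}$, with $c_n$ and $a_{n,k}$ as in the proof of Theorem~\ref{CDH:t2}, and letting $s_k$ denote the $L^2$-norm of $S_k(x^2;a,b,d)$, the orthogonality relation yields an estimate $s_k\le K(1+k)^\sigma(k!)^2$ of the same polynomial-times-$(k!)^2$ type as the pointwise bound \eqref{CDH:bound} on $S_k$. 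Consequently the convergence estimate already carried out in the proof of Theorem~\ref{CDH:t2} applies verbatim with $|S_k(x^2;a,b,d)|$ replaced by $s_k$, yielding $\sum_{n=0}^\infty|c_n|\sum_{k=0}^n|a_{n,k}|s_k<\infty$ for every $\rho\in\C$. This forces $\sum_{k=0}^\infty|d_k|s_k<\infty$, hence $\sum_{k=0}^\infty d_k^2s_k^2<\infty$, which is the required $L^2$-convergence. Since this estimate is entirely routine given the bounds already in place, the main obstacle is merely this bookkeeping rather than any new idea.
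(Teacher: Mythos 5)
Your proposal is correct and follows essentially the same route as the paper: the paper's own (very terse) proof likewise multiplies \eqref{CDH:f2} by $S_k(x^2;a,b,d)$ times the continuous dual Hahn weight, integrates over $(0,\infty)$ using the orthogonality relation, and justifies the term-by-term integration by the same $L^2$-type argument sketched in Corollary \ref{I:W1}. One bookkeeping remark: carrying the constants exactly, the norm from the orthogonality relation brings a factor $2\pi$ which the corollary as printed omits (compare Corollary \ref{I:CDH1}); this is an inconsistency in the paper's statement rather than a flaw in your argument, though your phrase ``proportional to'' quietly skips over it.
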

\begin{proof}
The proof is essentially the same as for Corollary \ref{I:CDH1}.
Multiply both sides of \eqref{CDH:f2} by $S_k(x^2;a,b,d)\left|\frac{\Gamma(a+ix)
\Gamma(b+ix)\Gamma(d+ix)}{\Gamma(2ix)}\right|^2$ and integrate over $x \in
(0, \infty)$. With justification for reordering, simplification completes the proof.
%\hfill\qed
\end{proof}

\begin{cor} \label{I:CDH3}
Let $\rho\in\C$ with $|\rho|<1$, $\gamma\in\C$ and $a,b,c,d > 0$ except for
possibly a pair of complex conjugates with positive real parts among $a,b,c$ and $a,b,d$.
Then
\begin{eqnarray} \nonumber
&&\int_0^\infty (1-\rho)^{-\gamma}\Hyper{3}{2}{\gamma,a+ix,a-ix}{a+b,a+c}{\frac
{\rho}{\rho-1}}\left|\frac{\Gamma(a+ix)\Gamma(b+ix)\Gamma(d+ix)}{\Gamma(2ix)}\right|
^2S_k(x^2;a,b,d)dx \\
&&\hspace{3cm} = \frac{(\gamma)_k\Gamma(a+b)\Gamma(k+a+d)\Gamma(k+b+d)}{(a+c)_k}\Hyper{2}{1}
{-d,\gamma+k}{k+a+c}{\rho}\rho^k.
\end{eqnarray}
\end{cor}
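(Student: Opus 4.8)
The plan is to obtain the claimed identity directly from the generalized generating function \eqref{CDH:f3} of Theorem \ref{CDH:t3}, reusing the mechanism of Corollaries \ref{I:CDH1} and \ref{I:CDH2}. Fix $k\in\No$ and multiply both sides of \eqref{CDH:f3} by $S_k(x^2;a,b,d)$ and by the continuous dual Hahn weight $w(x):=\left|\frac{\Gamma(a+ix)\Gamma(b+ix)\Gamma(d+ix)}{\Gamma(2ix)}\right|^2$, whose parameters $a,b,d$ are exactly those of the polynomials $S_n(x^2;a,b,d)$ appearing on the right-hand side of \eqref{CDH:f3}. Integrating over $x\in(0,\infty)$ and interchanging summation and integration, the orthogonality relation \cite[(9.3.2)]{Koekoeketal} annihilates every term of the series except the one whose polynomial index equals $k$.

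First I would read off the surviving term. Its coefficient is $\frac{(\gamma)_k\rho^k}{(a+b)_k(a+c)_kk!}\Hyper{2}{1}{-d,\gamma+k}{k+a+c}{\rho}$, and the accompanying integral is the squared norm $\int_0^\infty w(x)\{S_k(x^2;a,b,d)\}^2\,dx$, which \cite[(9.3.2)]{Koekoeketal} evaluates as $\Gamma(k+a+b)\Gamma(k+a+d)\Gamma(k+b+d)\,k!$. Converting $(a+b)_k=\Gamma(k+a+b)/\Gamma(a+b)$ then cancels the factor $\Gamma(k+a+b)$ together with the $k!$, and what remains is precisely the stated right-hand side $\frac{(\gamma)_k\Gamma(a+b)\Gamma(k+a+d)\Gamma(k+b+d)}{(a+c)_k}\Hyper{2}{1}{-d,\gamma+k}{k+a+c}{\rho}\rho^k$. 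This is a routine Pochhammer-to-Gamma simplification once the norm is inserted.

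The one genuine point requiring care---the main obstacle---is justifying the interchange of sum and integral, which I would handle exactly as in Corollary \ref{I:W1} by establishing $L^2(w)$-convergence of the series in \eqref{CDH:f3}. Setting $d_k=\frac{(\gamma)_k\rho^k}{(a+b)_k(a+c)_kk!}\Hyper{2}{1}{-d,\gamma+k}{k+a+c}{\rho}$ and $s_k^2=\int_0^\infty w(x)\{S_k(x^2;a,b,d)\}^2\,dx$, it suffices to verify \eqref{I:eq1}, i.e.\ $\sum_{k=0}^\infty d_k^2 s_k^2<\infty$. The orthogonality norm gives $s_k\le K(1+k)^\sigma(k!)^2$, the same growth type as the pointwise bound \eqref{CDH:bound} on $S_k$; hence the absolute-convergence estimate already performed in the proof of Theorem \ref{CDH:t3}, with $|S_k(x^2;a,b,d)|$ replaced throughout by $s_k$, yields $\sum_{n=0}^\infty|c_n|\sum_{k=0}^n|a_{n,k}|s_k<\infty$ whenever $|\rho|<1$. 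This in turn forces $\sum_{k=0}^\infty|d_k|s_k<\infty$ and therefore \eqref{I:eq1}. Since $|\rho|<1$ is assumed in the corollary, the interchange is licensed and the computation above completes the proof.
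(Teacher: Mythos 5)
Your proposal is correct and follows essentially the same route as the paper's own proof: multiply \eqref{CDH:f3} by $S_k(x^2;a,b,d)$ times the weight, integrate over $(0,\infty)$, let the orthogonality relation \cite[(9.3.2)]{Koekoeketal} annihilate all but the $k$-th term, and justify the interchange of sum and integral by the $L^2$ argument of Corollary \ref{I:W1} --- which is exactly what the paper does, only in far less detail. One caveat: \cite[(9.3.2)]{Koekoeketal} is normalized with a prefactor $\frac{1}{2\pi}$ on the integral, so the squared norm $\int_0^\infty w(x)\left\{S_k(x^2;a,b,d)\right\}^2 dx$ equals $2\pi\,\Gamma(k+a+b)\Gamma(k+a+d)\Gamma(k+b+d)\,k!$ rather than the value you quote, and carrying this through puts a factor $2\pi$ on the right-hand side, just as in Corollary \ref{I:CDH1}. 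Since the corollary as stated also lacks the $2\pi$, your omission cancels against what appears to be a typo in the paper's statement, so this is a normalization slip to fix rather than a flaw in your argument.
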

\begin{proof}
Multiply both sides of \eqref{CDH:f3} by $S_k(x^2;a,b,d)\left|\frac{\Gamma(a+ix)
\Gamma(b+ix)\Gamma(d+ix)}{\Gamma(2ix)}\right|^2$, integrate over $x \in
(0, \infty)$. Justification for reordering with simplification completes the proof.
%\hfill\qed
\end{proof}

%--------------------------------------
% Continuous Hahn Integrals
%--------------------------------------

The continuous Hahn polynomials satisfy the orthogonality relation 
\cite[(9.4.2)]{Koekoeketal}.
%\begin{eqnarray*}
%&& \int_{-\infty}^{\infty}|\Gamma(a+ix)\Gamma(b+ix)|^2p_m(x;a,b,\ac,\bc)p_n(x;a,b,
%\ac,\bc)dx \nonumber \\
%&&  \hspace{2cm} = 2\pi\frac{\Gamma(n+\acc)\Gamma(n+a+\bc)\Gamma(n+\ac+b)}{(2n+\acc+\bcc-1)}
%\frac{\Gamma(n+\bcc)}{\Gamma(n+\acc+\bcc-1)n!}\delta_{m,n}. \label{CH:orth}
%\end{eqnarray*}

\begin{cor}\label{I:CH1}
Let $\rho\in\C$, and $a,b,c \in \C$ such that $\Re a>0$, $\Re b>0$, $\Re c>0$ and
$\Im a = \Im b = \Im c$. Then
\begin{eqnarray} \nonumber
&& \int_{-\infty}^\infty \,
\Hyper{1}{1}{a+ix}{\acc}{-i\rho}\,
\Hyper{1}{1}{\bc-ix}{\bcc}{i\rho}
\Gamma(a+ix)\Gamma(c+ix)\Gamma(\ac-ix)\Gamma(\cc-ix)p_k(x;a,c,\ac,\cc)dx \nonumber \\
&& \hspace{5mm} = 2\pi 
\frac{
\Gamma(\acc)\Gamma(a+\cc)^2\Gamma(\ccc)
(k+\acc+\bcc-1)_k(a+\cc)_k(\ccc)_k
(\rho/4)^k
}
{k!(2k+\acc+\ccc-1)\Gamma(\acc+\ccc-1)
(\bcc)_k(\frac{1}{2}(\acc+\ccc-1))_k
}
\nonumber \\
&& \text{\normalsize $\hspace{5mm} \times \,
\Hyper{4}{5}{\frac{1}{2}(\Re a+\Re b+k),\frac{1}{2}(\Re a+\Re b+k+1),\Re a+\Re b+k-\frac12,
\Re b-\Re c}
{\Re a+\Re b+\frac{k-1}{2},\Re a+\Re b+\frac{k}{2},\Re b+\frac{k}{2},
\Re b+\frac{k+1}{2},\Re a+\Re c+k+\frac{1}{2}}{\frac{-\rho^2}{4}}
.$ \normalsize}
\end{eqnarray}
\end{cor}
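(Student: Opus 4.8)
The plan is to mimic the proof of Corollary~\ref{I:W1}: I would pick the coefficient of $p_k(x;a,c,\ac,\cc)$ out of the expansion \eqref{CH:f1} of Theorem~\ref{CH:t1} by integrating against the matching continuous Hahn weight. Explicitly, multiply both sides of \eqref{CH:f1} by $p_k(x;a,c,\ac,\cc)\,\Gamma(a+ix)\Gamma(c+ix)\Gamma(\ac-ix)\Gamma(\cc-ix)$ and integrate over $x\in(-\infty,\infty)$. By the orthogonality relation \cite[(9.4.2)]{Koekoeketal} for the continuous Hahn polynomials with parameters $a,c,\ac,\cc$, every summand on the right-hand side except the one with index $k$ integrates to zero; the surviving summand is the coefficient of $p_k$ in \eqref{CH:f1} --- including its ${}_4F_5$ factor, which is carried along unchanged --- multiplied by the squared norm $h_k:=\int_{-\infty}^\infty \Gamma(a+ix)\Gamma(c+ix)\Gamma(\ac-ix)\Gamma(\cc-ix)\,\{p_k(x;a,c,\ac,\cc)\}^2\,dx$.

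Next I would evaluate $h_k$. By \cite[(9.4.2)]{Koekoeketal} it is $2\pi$ times a ratio of gamma functions built from the four sums $a+\ac$, $a+\cc$, $c+\ac$, $c+\cc$. This is exactly where the hypothesis $\Im a=\Im b=\Im c$ does its work: it gives $a+\ac=\acc$, $c+\cc=\ccc$, and, most importantly, $a+\cc=c+\ac=\Re a+\Re c=\frac{1}{2}(\acc+\ccc)$, so the two mixed sums coincide and are real. Hence
\[
 h_k=2\pi\,\frac{\Gamma(k+\acc)\,\Gamma(k+a+\cc)^2\,\Gamma(k+\ccc)}{k!\,(2k+\acc+\ccc-1)\,\Gamma(k+\acc+\ccc-1)}.
\]

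The remaining work is purely algebraic: multiply $h_k$ by the prefactor $\dfrac{(k+\acc+\bcc-1)_k}{(\acc)_k(\bcc)_k(k+\acc+\ccc-1)_k}\,\rho^k$ coming from \eqref{CH:f1}, and convert shifted gamma functions to Pochhammer symbols via $\Gamma(k+z)=(z)_k\Gamma(z)$. This cancels $(\acc)_k$ against $\Gamma(k+\acc)$ and collapses $(k+\acc+\ccc-1)_k\,\Gamma(k+\acc+\ccc-1)$ into $\Gamma(2k+\acc+\ccc-1)=(\acc+\ccc-1)_{2k}\,\Gamma(\acc+\ccc-1)$. The one non-routine step is the Pochhammer duplication formula, which together with $a+\cc=\frac{1}{2}(\acc+\ccc)$ gives $(\acc+\ccc-1)_{2k}=4^k\big(\frac{1}{2}(\acc+\ccc-1)\big)_k(a+\cc)_k$; substituting this turns $\dfrac{\rho^k\,(a+\cc)_k^2}{(\acc+\ccc-1)_{2k}}$ into $\dfrac{(\rho/4)^k\,(a+\cc)_k}{\big(\frac{1}{2}(\acc+\ccc-1)\big)_k}$. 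This single identity simultaneously accounts for the factor $(\rho/4)^k$ and the denominator Pochhammer $\big(\frac{1}{2}(\acc+\ccc-1)\big)_k$ in the claimed right-hand side; collecting the constants $\Gamma(\acc)\Gamma(a+\cc)^2\Gamma(\ccc)/\Gamma(\acc+\ccc-1)$ then yields the stated formula.

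The analytic obstacle, namely justifying the interchange of summation and integration, is the same $L^2$-convergence argument already given in Corollary~\ref{I:W1}: using \eqref{CH:bound} to control the norms and \eqref{CH:bounda} to control the coefficients $a_{n,k}$, one shows $\sum_k |d_k|\,s_k<\infty$, where $d_k$ collects the coefficient of $p_k$ and $s_k$ is its $L^2$-norm, so I would simply refer to that argument. Thus the genuine difficulty here is not analytic but bookkeeping: spotting that $\Im a=\Im c$ forces $a+\cc=\frac{1}{2}(\acc+\ccc)$ and then invoking the duplication formula, without which the appearance of $(\rho/4)^k$ and of the halved Pochhammer in the denominator would look unexplained.
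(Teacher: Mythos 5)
Your proposal is correct and follows essentially the same route as the paper's own (very terse) proof: multiply \eqref{CH:f1} by $p_k(x;a,c,\ac,\cc)$ times the continuous Hahn weight $\Gamma(a+ix)\Gamma(c+ix)\Gamma(\ac-ix)\Gamma(\cc-ix)=|\Gamma(a+ix)\Gamma(c+ix)|^2$, integrate, invoke orthogonality \cite[(9.4.2)]{Koekoeketal}, and justify the interchange as in Corollary \ref{I:W1}. The only difference is that you spell out the simplification the paper leaves implicit --- evaluating the squared norm, using $\Im a=\Im c$ to get $a+\cc=\frac{1}{2}(\acc+\ccc)$, and applying the Pochhammer duplication formula to produce the $(\rho/4)^k$ and $\bigl(\frac{1}{2}(\acc+\ccc-1)\bigr)_k$ factors --- and your algebra checks out against the stated right-hand side.
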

\begin{proof}
Multiply both sides of \eqref{CH:f1} by $|\Gamma(a+ix)\Gamma(c+ix)|^2
p_k(a,c,\ac,\cc)$. 
Justification for interchanging the sum and the integral is similar to the proof of 
Corollary \ref{I:W1}.  Again, we leave this to the reader.
Integrating over $x \in (-\infty,\infty)$ completes the proof.
%\hfill\qed
\end{proof}

\begin{cor} \label{I:CH2}
Let $\rho\in\C$, $|\rho|<1$, and $a,b,c \in \C$ such that $\Re a>0$, $\Re b>0$, $\Re c>0$ and
$\Im a = \Im b = \Im c$. Then
\begin{eqnarray} \nonumber
&& \int_{-\infty}^\infty (1-\rho)^{1-\acc-\bcc}\,
\Hyper{3}{2}{\Re a+\Re b-\frac12, \Re a+\Re b, a+ix}{\acc, a+\bc}{-\frac{4\rho}{(1-\rho)^2}}\\
&&\hspace{40mm}\times|\Gamma(a+ix)\Gamma(c+ix)|^2p_{k}(x;a,c,\ac,\cc)dx \\
&& \hspace{10mm} 
= \frac{2\pi(-i\rho)^k\Gamma(\acc)\Gamma(\ccc)[\Gamma(a+\cc)]^2 
(\acc+\bcc-1)_k(k+\acc+\bcc-1)_k(a+\cc)_k(\ccc)_k}
{4^k k!(2k+\acc+\ccc-1) \Gamma(\acc+\ccc-1)
(\Re a+\Re c-\frac12)_k(\Re a+\Re b)_k}
\nonumber \\
&& \hspace{10mm} \times \,
\Hyper{2}{1}{\Re a+\Re b+k-\frac{1}{2}, \Re b-\Re c}
{\Re a+\Re c+k+\frac{1}{2}}{\rho^2}
\end{eqnarray}
\end{cor}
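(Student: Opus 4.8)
The plan is to mirror the strategy of Corollary~\ref{I:W1}, now applied to the generating-function expansion \eqref{CH:f2} of Theorem~\ref{CH:t2}. First I would multiply both sides of \eqref{CH:f2} by the continuous Hahn weight $|\Gamma(a+ix)\Gamma(c+ix)|^2$ together with $p_k(x;a,c,\ac,\cc)$, and integrate over $x\in(-\infty,\infty)$. Because the polynomials $p_m(x;a,c,\ac,\cc)$ are orthogonal with respect to this weight on the whole real line by \cite[(9.4.2)]{Koekoeketal}, every term of the series on the right-hand side of \eqref{CH:f2} integrates to zero except the one whose summation index equals the fixed $k$. The left-hand side then becomes precisely the integral appearing in the statement, while the right-hand side collapses to the product of the $k$-th coefficient of \eqref{CH:f2} with the squared norm $h_k$ of $p_k(x;a,c,\ac,\cc)$ furnished by \cite[(9.4.2)]{Koekoeketal}, namely the ratio of the four gamma functions $\Gamma(k+\acc)$, $\Gamma(k+a+\cc)$, $\Gamma(k+\ac+c)$, $\Gamma(k+\ccc)$ over $(2k+\acc+\ccc-1)\Gamma(k+\acc+\ccc-1)k!$, up to the factor $2\pi$.

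Before collapsing the series I must justify interchanging summation and integration. As in Corollary~\ref{I:W1}, I would establish that the expansion \eqref{CH:f2} converges in the $L^2$-sense against the weight, which reduces to the convergence of $\sum_k |d_k| s_k$, where $s_k=\sqrt{2\pi h_k}$ is the $L^2$-norm of $p_k(x;a,c,\ac,\cc)$ and $d_k$ is its coefficient. The norm $s_k$ admits a bound of the same polynomial-times-factorial type as the pointwise bound \eqref{CH:bound} of Lemma~\ref{CH:l2}; combining this with the coefficient estimates already produced in the proof of Theorem~\ref{CH:t2} (which gave absolute convergence of the double sum for $|\rho|<1$) yields \eqref{I:eq1} exactly as in the Wilson case. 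Since the hypothesis $|\rho|<1$ is already in force, this step presents no new difficulty and may legitimately be stated as ``similar to the proof of Corollary~\ref{I:W1}.''

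The remaining --- and genuinely computational --- step is the algebraic simplification of the surviving term. The ${}_2F_1$ factor and the power $(-i\rho)^k$ in \eqref{CH:f2} carry through unchanged, so the work lies entirely in combining the coefficient $(\acc+\bcc-1)_{2k}\big/\big[(\acc)_k(\Re a+\Re b)_k(\acc+\ccc+k-1)_k\big]$ with $2\pi h_k$. Here I would use two standard Pochhammer identities: the splitting $(\alpha)_{2k}=(\alpha)_k(\alpha+k)_k$, which both rewrites $(\acc+\bcc-1)_{2k}=(\acc+\bcc-1)_k(k+\acc+\bcc-1)_k$ and lets the denominator factor $(\acc+\ccc+k-1)_k$ merge with the factor $(\acc+\ccc-1)_k$ hidden in $\Gamma(k+\acc+\ccc-1)$ to form $(\acc+\ccc-1)_{2k}$; and the duplication formula $(\alpha)_{2k}=4^k(\frac\alpha2)_k(\frac{\alpha+1}2)_k$. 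The crucial simplification exploits the standing hypothesis $\Im a=\Im c$, under which $a+\cc=\ac+c=\Re a+\Re c$ is real, so that $\Gamma(k+a+\cc)\Gamma(k+\ac+c)=\Gamma(a+\cc)^2(a+\cc)_k^2$; applying the duplication formula in the form $(\acc+\ccc-1)_{2k}=4^k(\Re a+\Re c-\frac12)_k(a+\cc)_k$ then cancels one power of $(a+\cc)_k$ and produces exactly the factors $4^{-k}$ and $(\Re a+\Re c-\frac12)_k^{-1}$ appearing in the statement. I expect this bookkeeping of gamma and Pochhammer factors to be the main obstacle, not because it is deep but because the correct pairing of factors must be tracked carefully; once it is done, the stated right-hand side emerges directly.
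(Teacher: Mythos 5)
Your proposal is correct and follows essentially the same route as the paper: the paper's proof simply says to repeat the argument of Corollary \ref{I:CH1} (multiply \eqref{CH:f2} by $|\Gamma(a+ix)\Gamma(c+ix)|^2 p_k(x;a,c,\ac,\cc)$, integrate, invoke orthogonality, and justify the interchange as in Corollary \ref{I:W1}). Your added bookkeeping --- the splitting $(\alpha)_{2k}=(\alpha)_k(\alpha+k)_k$, the duplication formula, and the use of $\Im a=\Im c$ to write $\Gamma(k+a+\cc)\Gamma(k+\ac+c)=\Gamma(a+\cc)^2(a+\cc)_k^2$ --- is exactly the simplification the paper leaves implicit, and it reproduces the stated right-hand side.
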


\begin{proof}
The proof is the same as Corollary \ref{I:CH1}, except apply to both
sides of \eqref{CH:f2}.
%\hfill\qed
\end{proof}

%--------------------------------------
% Meixner-Pollaczek Integrals
%--------------------------------------

If $\lambda > 0$ and $\phi \in (0,\pi)$, then the Meixner-Pollaczek polynomials
satisfy the orthogonality relation \cite[(9.7.2)]{Koekoeketal}.
%\begin{eqnarray*} \label{Mp:orth}
%\int_{-\infty}^\infty e^{(2\phi-\pi)x}|\Gamma(\lambda+ix)|^2P_m^{(\lambda)}(x;\phi)
%P_n^{(\lambda)}(x;\phi)dx = 2\pi\frac{\Gamma(n+2\lambda)}{(2\sin\phi)^{2\lambda}n!}
%\delta_{m,n}.
%\end{eqnarray*}

\begin{cor}\label{I:MP1}
Let $\lambda>0$, $\psi,\phi \in (0,\pi)$, and $\rho\in\C$ such that \eqref{MP:cond} holds.
Then
\begin{eqnarray} \nonumber
&&\int_{-\infty}^\infty (1-e^{i\phi}\rho)^{-\lambda+ix}(1-e^{-i\phi}\rho)^{-\lambda-ix}
e^{(2\psi-\pi)x}|\Gamma(\lambda+ix)|^2P_k^{(\lambda)}(x;\psi)dx \\
&& \hspace{15mm} = \left(1-\rho\frac{\sin(\psi-\phi)}{\sin\psi}\right)^{-2\lambda}\frac{2\pi\Gamma(k+2\lambda)\tilde\rho^k}{(2\sin\psi)^{2\lambda}k!},
\end{eqnarray}
where
\begin{eqnarray*}
\tilde\rho= \frac{\rho\sin\phi}{\sin\psi-\rho\sin(\psi-\phi)}.
\end{eqnarray*}
\end{cor}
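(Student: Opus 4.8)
The plan is to apply the Meixner-Pollaczek orthogonality relation \cite[(9.7.2)]{Koekoeketal} to the generating-function expansion \eqref{MP:f1} established in Theorem \ref{MP:t1}. Writing the weight of the Meixner-Pollaczek polynomials with parameter $\psi$ as $w(x):=e^{(2\psi-\pi)x}|\Gamma(\lambda+ix)|^2$, I would fix $k\in\No$, multiply both sides of \eqref{MP:f1} (renaming the summation index to $n$ to avoid a clash with $k$) by $w(x)P_k^{(\lambda)}(x;\psi)$, and integrate over $x\in(-\infty,\infty)$. The left-hand side then becomes exactly the integral appearing in the statement, while the right-hand side becomes the constant prefactor $\left(1-\rho\frac{\sin(\psi-\phi)}{\sin\psi}\right)^{-2\lambda}$ times the term-by-term integrals $\sum_n \tilde\rho^n \int_{-\infty}^\infty w(x)P_n^{(\lambda)}(x;\psi)P_k^{(\lambda)}(x;\psi)\,dx$.

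By orthogonality all terms with $n\ne k$ vanish and the surviving $n=k$ term contributes the squared norm $s_k^2=\frac{2\pi\Gamma(k+2\lambda)}{(2\sin\psi)^{2\lambda}k!}$. Collecting this with the prefactor and the factor $\tilde\rho^k$ reproduces the right-hand side of the corollary at once, so no further simplification of hypergeometric coefficients is needed (in contrast to the Wilson and continuous Hahn corollaries, where Whipple's sum and related manipulations were still required).

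The main obstacle, exactly as in the proof of Corollary \ref{I:W1}, is justifying the interchange of summation and integration, which I would do by showing that the series $\sum_n \tilde\rho^n P_n^{(\lambda)}(\cdot;\psi)$ converges in $L^2(w\,dx)$. With $s_n^2:=\int_{-\infty}^\infty w(x)\{P_n^{(\lambda)}(x;\psi)\}^2\,dx=\frac{2\pi\Gamma(n+2\lambda)}{(2\sin\psi)^{2\lambda}n!}$, the identity $\Gamma(n+2\lambda)/n!=\Gamma(2\lambda)(2\lambda)_n/n!$ together with \eqref{W:bound2} gives $s_n\le C(1+n)^{\lambda}$ for a constant $C$ independent of $n$. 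Hence $\sum_n|\tilde\rho|^n s_n\le C\sum_n(1+n)^{\lambda}|\tilde\rho|^n<\infty$ as soon as $|\tilde\rho|<1$, which yields absolute convergence of the series in $L^2(w\,dx)$; since $P_k^{(\lambda)}(\cdot;\psi)\in L^2(w\,dx)$, continuity of the inner product then legitimizes passing it through the sum term by term.

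It therefore remains only to check that hypothesis \eqref{MP:cond} forces $|\tilde\rho|<1$. Since $\tilde\rho=\frac{\rho\sin\phi}{\sin\psi-\rho\sin(\psi-\phi)}$, the triangle inequality gives $|\sin\psi-\rho\sin(\psi-\phi)|\ge\sin\psi-|\rho||\sin(\psi-\phi)|$, and \eqref{MP:cond} guarantees this lower bound is both strictly positive and strictly larger than $|\rho|\sin\phi$; dividing yields $|\tilde\rho|<1$. This is precisely the constraint under which \eqref{MP:f1} itself was derived in Theorem \ref{MP:t1}, so the hypotheses are consistent and the argument closes.
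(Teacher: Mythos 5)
Your proposal is correct and follows essentially the same route as the paper: multiply \eqref{MP:f1} by the Meixner--Pollaczek weight $e^{(2\psi-\pi)x}|\Gamma(\lambda+ix)|^2$ times $P_k^{(\lambda)}(x;\psi)$, integrate, let orthogonality kill all but the $n=k$ term, and justify the interchange of sum and integral by $L^2$-convergence of the series, which is exactly the argument the paper invokes by reference to Corollary \ref{I:W1} and leaves to the reader. Your explicit verification that \eqref{MP:cond} forces $|\tilde\rho|<1$, together with the bound $s_n\le C(1+n)^{\lambda}$ from the exact norm and \eqref{W:bound2}, is a clean instantiation of that deferred step rather than a different method.
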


\begin{proof}
Multiply both sides of \eqref{MP:f1} by $e^{(2\psi-\pi)x}|\Gamma(\lambda
+ix)|^2P_k^{(\lambda)}(x;\psi)$. Integrating over $x\in(-\infty,\infty)$ gives the desired result.
The justification for interchange of the sum and the integral is similar 
to the proof of Corollary \ref{I:W1}.  This is left to the reader.
%\hfill\qed
\end{proof}

\begin{cor} \label{I:MP2}
Let $\lambda>0$, $\rho\in\C$ and $\psi,\phi \in (0,\pi)$. Then
\begin{eqnarray} \nonumber
&& \int_{-\infty}^\infty e^\rho \,
\Hyper{1}{1}{\lambda+ix}{2\lambda}{(e^{-2i\phi}-1)\rho}
e^{(2\psi-\pi)x}|\Gamma(\lambda+ix)|^2P_k^{(\lambda)}(x;\psi)dx \\
&& \hspace{15mm} = \exp
\left(\frac{\rho e^{-i\phi}\sin(\psi-\phi)}{\sin\psi}\right)
\left(\frac{\sin\phi}{\sin\psi}\right)^k\frac{2\pi\Gamma(2\lambda)\rho^k}{e^{ik\phi}(2\sin\psi)^{2\lambda}k!}.
\end{eqnarray}
\end{cor}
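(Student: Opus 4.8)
The plan is to mirror the derivation of the preceding integral corollaries, combining \eqref{MP:f2} from Theorem \ref{MP:t2} with the orthogonality relation \cite[(9.7.2)]{Koekoeketal} for the Meixner-Pollaczek polynomials at parameter $\psi$,
\begin{equation*}
\int_{-\infty}^\infty e^{(2\psi-\pi)x}|\Gamma(\lambda+ix)|^2 P_m^{(\lambda)}(x;\psi)P_n^{(\lambda)}(x;\psi)\,dx=\frac{2\pi\,\Gamma(n+2\lambda)}{(2\sin\psi)^{2\lambda}n!}\,\delta_{mn}.
\end{equation*}
First I would multiply both sides of \eqref{MP:f2} by $e^{(2\psi-\pi)x}|\Gamma(\lambda+ix)|^2 P_k^{(\lambda)}(x;\psi)$ and integrate over $x\in(-\infty,\infty)$. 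The exponential prefactor on the right of \eqref{MP:f2} does not depend on $x$ and so pulls out of the integral, after which orthogonality annihilates every term of the series except $n=k$.

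The surviving term is routine to evaluate. The coefficient of $P_k^{(\lambda)}(x;\psi)$ in the series is $(\sin\phi/\sin\psi)^k\rho^k/\big((2\lambda)_k e^{ik\phi}\big)$, and multiplying it by the squared norm $2\pi\Gamma(k+2\lambda)/\big((2\sin\psi)^{2\lambda}k!\big)$ and simplifying through $\Gamma(k+2\lambda)/(2\lambda)_k=\Gamma(2\lambda)$ reproduces the claimed right-hand side, exponential prefactor included. No delicate manipulation is needed beyond this bookkeeping.

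The one substantive point is justifying the term-by-term integration, which I would treat exactly as in the proof of Corollary \ref{I:W1} by verifying $L^2$-convergence of the expansion with respect to the weight $e^{(2\psi-\pi)x}|\Gamma(\lambda+ix)|^2$. This case is in fact cleaner than the Wilson one, since after Theorem \ref{MP:t2} the right-hand side is already a single sum in $P_k^{(\lambda)}(x;\psi)$. Writing $d_k=(\sin\phi/\sin\psi)^k\rho^k/\big((2\lambda)_k e^{ik\phi}\big)$ and $s_k^2=2\pi\Gamma(k+2\lambda)/\big((2\sin\psi)^{2\lambda}k!\big)$, a short cancellation using $\Gamma(k+2\lambda)=(2\lambda)_k\Gamma(2\lambda)$ gives
\begin{equation*}
|d_k|^2 s_k^2=\frac{2\pi\,\Gamma(2\lambda)}{(2\sin\psi)^{2\lambda}}\left(\frac{|\rho|\sin\phi}{\sin\psi}\right)^{2k}\frac{1}{(2\lambda)_k\,k!}.
\end{equation*}
The factor $1/\big((2\lambda)_k k!\big)$ forces super-exponential decay, so $\sum_k|d_k|^2 s_k^2<\infty$ for every $\rho\in\C$; this is precisely why the corollary, unlike Corollary \ref{I:MP1} and the $|\rho|<1$ cases, imposes no restriction on $\rho$. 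Since $\{P_k^{(\lambda)}(\cdot\,;\psi)\}$ is an orthogonal system in the weighted $L^2$ space, this bound yields $L^2$-convergence of the series, which together with the pointwise identity of Theorem \ref{MP:t2} legitimizes the interchange of sum and integral; the uniform polynomial bound \eqref{MP:bound} from Lemma \ref{MP:l2} supplies the remaining control. I anticipate no genuine obstacle: the argument is a direct transcription of the Wilson $L^2$ justification, with the Meixner-Pollaczek norm and bound replacing their Wilson counterparts, so the only care required is in tracking the $e^{-ik\phi}$ and $(\sin\phi/\sin\psi)^k$ factors when matching the final form.
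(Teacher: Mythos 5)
Your proposal is correct and follows essentially the same route as the paper: multiply \eqref{MP:f2} by $e^{(2\psi-\pi)x}|\Gamma(\lambda+ix)|^2P_k^{(\lambda)}(x;\psi)$, integrate over $\R$, apply the orthogonality relation so only the $k$-th term survives, and justify the interchange of sum and integral by the $L^2$ argument of Corollary \ref{I:W1}. Your explicit computation of $\sum_k |d_k|^2 s_k^2<\infty$ (and the observation that it holds for all $\rho\in\C$) simply fills in the detail the paper leaves to the reader.
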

\begin{proof}
The proof is the same as Corollary \ref{I:MP1}, except apply to
both sides of \eqref{MP:f2}.
%\hfill\qed
\end{proof}

\begin{cor} \label{I:MP3}
Let $\lambda > 0$, $\gamma, \rho\in\C$ and $\psi, \phi \in (0,\pi)$ be such that \eqref{MP:cond} holds. Then
\begin{eqnarray} \nonumber
&& \int_{-\infty}^\infty (1-\rho)^{-\gamma} \,
\Hyper{2}{1}{\gamma,\lambda+ix}{2\lambda}{\frac{(1-e^{-2i\phi})\rho}{\rho-1}}
e^{(2\psi-\pi)x}|\Gamma(\lambda+ix)|^2P_k^{(\lambda)}(x;\psi)dx \\
&& \hspace{15mm} = \left(1-\frac{\rho\sin(\psi-\phi)}{e^{i\phi}\sin\psi}\right)^{-\gamma-k}\left(\frac{\sin\phi}{\sin\psi}\right)^k \frac{(\gamma)_k\Gamma(2\lambda)2\pi\rho^k}{e^{ik\phi}(2\sin\psi)^{2\lambda}k!}.
\end{eqnarray}
\end{cor}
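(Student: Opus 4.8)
The plan is to mirror the proofs of the preceding corollaries, extracting a single term from the series \eqref{MP:f3} by means of the Meixner-Pollaczek orthogonality relation. First I would fix $k\in\No$ and multiply both sides of \eqref{MP:f3} by $e^{(2\psi-\pi)x}|\Gamma(\lambda+ix)|^2 P_k^{(\lambda)}(x;\psi)$, then integrate over $x\in(-\infty,\infty)$. The weight $e^{(2\psi-\pi)x}|\Gamma(\lambda+ix)|^2$ is precisely the orthogonality weight for the parameter $\psi$ (see \cite[(9.7.2)]{Koekoeketal}), so, granting that the interchange of summation and integration is valid, every term on the right-hand side integrates to zero except the one carrying summation index $k$.

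For that surviving term I would invoke the orthogonality norm
\[
\int_{-\infty}^\infty e^{(2\psi-\pi)x}|\Gamma(\lambda+ix)|^2\bigl(P_k^{(\lambda)}(x;\psi)\bigr)^2\,dx
=\frac{2\pi\,\Gamma(k+2\lambda)}{(2\sin\psi)^{2\lambda}\,k!}.
\]
Multiplying this by the $k$-th coefficient of \eqref{MP:f3} and merging the two prefactors $\bigl(1-\rho\sin(\psi-\phi)/(e^{i\phi}\sin\psi)\bigr)^{-\gamma}$ and $\bigl(1-\rho\sin(\psi-\phi)/(e^{i\phi}\sin\psi)\bigr)^{-k}$ into $\bigl(1-\rho\sin(\psi-\phi)/(e^{i\phi}\sin\psi)\bigr)^{-\gamma-k}$ produces the asserted right-hand side. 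The only nonmechanical simplification is the cancellation $\Gamma(k+2\lambda)/(2\lambda)_k=\Gamma(2\lambda)$, which is what converts the norm's $\Gamma(k+2\lambda)$ together with the coefficient's $(2\lambda)_k^{-1}$ into the bare factor $\Gamma(2\lambda)$ appearing in the statement.

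The single genuine obstacle is justifying the term-by-term integration. Here I would argue exactly as in Corollary \ref{I:W1} (to which Corollaries \ref{I:MP1} and \ref{I:MP2} already defer): it suffices to show that the series on the right of \eqref{MP:f3} converges in $L^2$ with respect to the weight, i.e.\ that $\sum_{k=0}^\infty d_k^2 s_k^2<\infty$, where $s_k$ denotes the orthogonality norm above and $d_k=\sum_{n=k}^\infty c_n a_{n,k}$ with $c_n$ and $a_{n,k}$ as in the proof of Theorem \ref{MP:t3}. Since $s_k^2=2\pi\Gamma(2\lambda)(2\lambda)_k/\bigl((2\sin\psi)^{2\lambda}k!\bigr)$ is bounded by a polynomial in $k$ by \eqref{W:bound2}, the norm $s_k$ is of the same polynomial type as the pointwise bound \eqref{MP:bound} on $P_k^{(\lambda)}$. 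Consequently the very chain of estimates already carried out in the proof of Theorem \ref{MP:t3}, with $|P_k^{(\lambda)}(x;\psi)|$ replaced throughout by $s_k$, gives $\sum_{n=0}^\infty|c_n|\sum_{k=0}^n|a_{n,k}|s_k<\infty$ under condition \eqref{MP:cond}; this yields $\sum_{k=0}^\infty|d_k|s_k<\infty$ and hence the required $\sum_{k=0}^\infty d_k^2 s_k^2<\infty$. Because this is verbatim parallel to Corollary \ref{I:W1}, in the final write-up I would present the displayed computation and simply refer the reader there for the convergence details.
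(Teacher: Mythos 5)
Your proposal is correct and follows exactly the paper's route: the paper's proof of this corollary simply says it is the same as Corollary \ref{I:MP1} applied to \eqref{MP:f3}, which in turn multiplies by $e^{(2\psi-\pi)x}|\Gamma(\lambda+ix)|^2P_k^{(\lambda)}(x;\psi)$, integrates, invokes orthogonality, and defers the interchange justification to the $L^2$ argument of Corollary \ref{I:W1}. Your write-up merely makes explicit the details the paper leaves to the reader (the norm evaluation, the cancellation $\Gamma(k+2\lambda)/(2\lambda)_k=\Gamma(2\lambda)$, and the adaptation of the estimates from Theorem \ref{MP:t3} with $s_k$ in place of $|P_k^{(\lambda)}(x;\psi)|$), all of which are accurate.
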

\begin{proof}
The proof is the same as Corollary \ref{I:MP1}, except apply to
both sides of \eqref{MP:f3}.
%\hfill\qed
\end{proof}

%\subsection*{Acknowledgments}
%This work was conducted while H.~S.~Cohl was a National
%Research Council Research Postdoctoral Associate in the
%Applied and Computational Mathematics Division
%at the National Institute of Standards and Technology,
%Gaithersburg, Maryland, U.S.A.
%%%\section*{References}
%and H. Volkmer was a visitor at NIST.

%%%\section*{References}
%\bibliographystyle{plain}
%\bibliography{/home/hcohl/tex/refbib.bib}
%%\bibliography{/Users/howiejunida/tex/refbib.bib}

\def\cprime{$'$}

\end{document}